\newcommand{\C}{\mathbb{C}}
\newcommand{\EE}{\mathbb{E}}
\newcommand{\Es}{\mathscr{E}}
\newcommand{\M}{\mathcal{M}}
\newcommand{\N}{\mathbb{N}}
\newcommand{\PP}{\mathbb{P}}
\newcommand{\R}{\mathbb{R}}
\renewcommand{\S}{\mathbb{S}}
\newcommand{\T}{\mathbb{T}}
\newcommand{\Z}{\mathbb{Z}}
\DeclareMathOperator{\diam}{diam}
\DeclareMathOperator{\trace}{Tr}
\renewcommand{\epsilon}{\varepsilon}
\DeclareMathOperator{\imm}{i}
\newcommand{\set}[1]{\left\{#1\right\}}
\newcommand{\pa}[1]{\left(#1\right)}
\newcommand{\bra}[1]{\left[#1\right]}
\newcommand{\abs}[1]{\left|#1\right|}
\newcommand{\norm}[1]{\left\|#1\right\|}
\newcommand{\brak}[1]{\left\langle#1\right\rangle}
\newcommand{\wick}[1]{:\mathrel{#1}:}
\newcommand{\expt}[1]{\mathbb{E}\left[#1\right]}
\newtheorem{thm}{Theorem}[section]
\newtheorem{defi}[thm]{Definition}
\newtheorem{cor}[thm]{Corollary}
\newtheorem{lem}[thm]{Lemma}
\newtheorem{prop}[thm]{Proposition}
\theoremstyle{remark}
\newtheorem{rmk}[thm]{Remark}
\numberwithin{equation}{section}
\title[CLT for 2D Euler Gibbsian Invariant Measures]{A Central Limit Theorem for Gibbsian Invariant Measures of 2D Euler Equations}
\author[F. Grotto]{Francesco Grotto}
  \address{Scuola Normale Superiore, Piazza dei Cavalieri, 7, 56126 Pisa, Italia}
  \email{\href{mailto:francesco.grotto@sns.it}{francesco.grotto@sns.it}}
\author[M. Romito]{Marco Romito}
  \address{Dipartimento di Matematica, Universit\`a di Pisa, Largo Bruno Pontecorvo 5, 56127 Pisa, Italia}
  \email{\href{mailto:marco.romito@unipi.it}{marco.romito@unipi.it}}
  \urladdr{\url{http://people.dm.unipi.it/romito}}
  \keywords{point vortices, central limit theorem, 2D Euler equations}
\date\today
\begin{document}

\begin{abstract}
 We consider Canonical Gibbsian ensembles of Euler point vortices on the 2-dimensional torus or in a bounded domain of $\R^2$.
 We prove that under the Central Limit scaling of vortices intensities, and provided that the system has zero global space average
 in the bounded domain case (neutrality condition),
 the ensemble converges to the so-called Energy-Enstrophy Gaussian random distributions. This can be interpreted as describing
 Gaussian fluctuations around the mean field limit of vortices ensembles of \cite{clmp92,KieWan2012}, and it generalises
 the result on fluctuations of \cite{bodineau}.
 The main argument consists in proving convergence of partition functions of vortices and Gaussian distributions.
\end{abstract}

\maketitle

\section{Introduction}

The close resemblance between Onsager's point vortices ensembles and Energy-Enstrophy Gaussian invariant
measures for the two dimensional Euler flow is known since the works of Kraichnan on two-dimensional turbulence, \cite{kraichnan}.
In the present paper, we rigorously establish this connection, as we now outline.
On a two dimensional domain $D$, which in the following will be the two dimensional torus $\T^2$ or a bounded domain of $\R^2$,
Euler equations in vorticity form are given by
\begin{equation*}
	\begin{cases}
	\partial_t \omega + u\cdot \nabla \omega =0,\\
	\nabla\cdot u=0,\\
	\nabla^\perp \cdot u=\omega,
	\end{cases}
\end{equation*}
where $\nabla^\perp=(\partial_2,-\partial_1)$.
Since $u$ is a divergence-less vector field in dimension 2, it can be expressed as $u=\nabla^\perp \phi$;
the \emph{stream function} $\phi$ then must satisfy $\Delta \phi= \omega$, and one can thus recover the velocity field 
from vorticity by $u=\nabla^\perp \phi=-\nabla^\perp (-\Delta)^{-1}\omega$.
The equations have to be complemented with a gauge choice, that is null space average on $\T^2$ and Dirichlet boundary conditions
on $\phi$ in the case $D\subset \R^2$. 
Euler equations are known to be well posed for initial data $\omega_0\in L^\infty(D)$ (see \cite{marchioropulvirenti}),
and smooth solutions preserve the first integrals \emph{energy} and \emph{enstrophy},
\begin{equation}\label{firstintegrals}
	E=\int_D |u|^2dx, \qquad S=\int_D \omega^2dx.
\end{equation}
The Gaussian field associated to the quadratic form $\beta E+\gamma S$ on $\T^2$, the \emph{energy-enstrophy measure} formally defined as
\begin{equation}\label{energyenstrophymeasure}
	d\mu_{\beta,\gamma}(\omega)=\frac1{Z_{\beta,\gamma}}e^{-\beta E(\omega)-\gamma S(\omega)}d\omega,
\end{equation}
is thus a natural candidate as an invariant measure of the flow. However, the field is only supported on spaces of quite rough 
distributions --not even measures-- so that making sense of Euler equations in this setting is not trivial:
this problem has been effectively tackled both by means of Fourier analysis, see for instance \cite{ardfhk,albeveriocruzeiro},
and approximation by point vortices systems, \cite{flandoli,flandoliluo17,flandoliluo19,grotto}. 
The latter ones are defined, let us say first on $\T^2$, as systems of $N$ point particles with positions $x_i\in D$ and intensities $\xi_i\in\R$,
satisfying the system of ordinary differential equations
\begin{equation*}
	\dot x_{i,t}=-\sum_{j\neq i} \xi_j \nabla^\perp G(x_{i,t},x_{j,t}),
\end{equation*}
where the interacting potential is given in terms of the Green function $G$ of the Laplace operator $-\Delta$,
subject to the aforementioned boundary conditions.
The vorticity distribution $\omega=\sum \xi_i \delta_{x_i}$ solves Euler equations in weak sense, see \cite{marchioropulvirenti}:
indeed, it is driven by the vector field $u=\nabla^\perp G\ast \omega$, which, as already noted above, is the equivalent Biot-Savart formulation of 
$\omega=\nabla^\perp\cdot u$.
The system is Hamiltonian with respect to the conjugate coordinates $(\xi_i x_{i,2},x_{i,1})$, and Hamiltonian function
\begin{equation*}
H(x_1,\dots,x_n)=\sum_{i< j}^N \xi_i\xi_j G(x_i,x_j),
\end{equation*}
that is the interaction energy of the vortices. On a bounded domain, the presence of an impermeable boundary produces self interaction terms,
which have to be added to the Hamiltonian in order for the system to satisfy (in weak sense) Euler dynamics 
(see \cite[Section 4.1]{marchioropulvirenti}). 
In both cases, notwithstanding the singularity of the interaction potential,
a slight modification of the arguments in \cite{marchioropulvirenti} -which are set on the whole $\R^2$- 
shows that the system is well-posed for almost every initial
condition $(x_i,\xi_i)_{i=1,\dots N}$ with respect to product Lebesgue measure, the latter being preserved according to Liouville theorem.
Euler point vortices also preserve the \emph{canonical Gibbs ensemble} at inverse temperature $\beta\geq 0$,
\begin{equation*}
	\nu_{\beta,N}(dx_1,\dots,dx_n)= \frac{1}{Z_{\beta,N}} \exp\pa{-\beta H(x_1,\dots,x_n)}dx_1,\dots,dx_n.
\end{equation*}
This measure was first introduced by Onsager in this context, \cite{onsager}.
Equilibrium ensembles at high kinetic energy, which exhibit the tendency to cluster vortices of same sign intensities expected in a turbulent regime,
were proposed by Onsager allowing negative values of $\beta$. Unfortunately, we will not be able to treat the case $\beta<0$ with our arguments.

As our main result, we obtain the Gaussian energy-enstrophy measure as a limit of Gibbsian point vortices ensembles,
in a sort of Central Limit Theorem. Namely, we will consider increasingly many vortices sending $N\rightarrow\infty$,
while decreasing their intensities $\xi_i=\frac{\sigma_i}{\sqrt{\gamma N}}$, with $\gamma>0$ and $\sigma_i=\pm 1$,
as in the familiar central limit scaling. 
We will prove that, if positions of vortices $x_1,\dots,x_N$ have joint distribution $\nu_{\beta,N}$ on $\T^{2N}$, the random measure
\begin{equation*}
	\sum_{i=1}^N \xi_i (\delta_{x_i}-1)\xrightarrow{N\rightarrow\infty} \mu_{\beta,\gamma}
\end{equation*}
converges in law to the energy-enstrophy measure. On $\T^2$, the result does not depend on the choice of signs $\sigma_i$:
to each Dirac delta representing a vortex we are subtracting its space average, so that the global average vanishes
and we are thus looking at fluctuations around a null profile.
In fact, our result can be regarded as an investigation of Gaussian fluctuations
around the well-known mean-field limit, in the case where the latter vanishes, see \autoref{sec:meanfield} below.
This is the reason why we will need to impose (asymptotic) neutrality of the global intensity on bounded domains $D$,
that is, to ensure that the limit in the law of large numbers scaling is naught, since in that case it is not possible
to renormalise Dirac deltas because of the boundary condition.

Most of the underlying physical understanding of the topic goes back to classical works: we mainly refer to the ones of Kraichnan and Onsager,
see respectively \cite{kraichnan,onsager} and references therein. 
The monography \cite{marchioropulvirenti} covers the basic theory of point vortices systems, especially in its dynamical aspects.
We mostly refer to \cite{clmp92,clmp95,lionsbook} and related works for the statistical mechanics of equilibrium ensembles of point vortices.
Our result in a sense completes the one of \cite{bpp}, in which the same scaling limit of point vortices was performed, but with
a smoothed interaction potential. We also mention that a Central Limit Theorem for fluctuations
of point vortices in the case where $D$ is a disk was derived at the end of \cite{bodineau}: 
that result is unfortunately incomplete, since it proves convergence of
integrals of the fluctuation field against a restricted set of test functions. Both \cite{bpp,bodineau} emphasise the relevance
of a good control of partition functions, which in fact is crucial in the present work.
Finally, we mention the Central Limit Theorem of \cite{serfaty18}, concerning a different, \emph{mesoscopic} scaling regime.

\subsection{General Outline and Notation}

In \autoref{sec:clttorus} we discuss in detail our main result in the case where $D=\T^2$ is the 2-dimensional torus.
First, rigorous definitions and properties of Gibbsian ensembles of point vortices and Gaussian invariant measures of Euler equation
are recalled. As already mentioned, the core argument is a uniform bound for partition functions of canonical Gibbs measures,
the strategy being the following:
\begin{itemize}
	\item we split the interaction potential, the Laplacian Green function, into a regular, \emph{long range} part and a singular,
	\emph{short range} part,
	the latter being the Green function of the operator $m^2-\Delta$ (\emph{2-dimensional Yukawa potential}); 
	\item the contribution of the regular part can be interpreted as an exponential integral of a regular Gaussian field:
	since the covariance kernel corresponds to a fourth order operator, no normal ordering is required;
	\item on the other hand, the contribution of the (pointwise vanishing) singular part is controlled by estimating
	the partition function of vortices interacting by Yukawa potential with diverging mass $m\rightarrow\infty$.
\end{itemize}
\autoref{thm:clttorus} is the main result of \autoref{sec:clttorus}. In principle, it could be extended to compact Riemannian surfaces $D$:
we do not pursue such generality, and we only consider two other physically relevant geometries, namely the 2-dimensional
sphere $\S^2$ and bounded domains of $\R^2$. 
The former, being a compact surface without boundary, is completely analogous to the case on $\T^2$, and it is briefly discussed in \autoref{sec:sphere}.
In \autoref{sec:cltdomain} we show how to adapt the previous arguments to the case of a bounded domain,
the main issue being the self-interaction terms in the Hamiltonian due to the presence of a boundary.
Finally in \autoref{sec:meanfield}, as concluding remarks, we outline how our result compares to the well established literature
on mean field limits for point vortices. 

Throughout the paper, the symbols $\simeq, \lesssim$ denote (in)equalities up to uniform multiplicative factors.
The symbol $\sim$ denotes equality in law of random variables.
The letter $C$ denotes possibly different constants, depending only on its eventual subscripts. Finally, $\chi_A$
is the indicator function of the set $A$.

\section{The Periodic Case}\label{sec:clttorus}

Let $\T^2=\R^2/\Z^2$ be the 2-dimensional torus, and denote $d(x,y)$ the distance between two points $x,y\in\T^2$.
We work in the zero average setting, that is we only consider functions (or distributions) having
zero average on $\T^2$: 
we keep it in mind denoting with $\dot L^p(\T^2), \dot H^\alpha(\T^2)$ Lebesgue and Sobolev spaces of zero averaged functions.
It will be convenient to work with Fourier series: let $e_k(x)=e^{2\pi\imm k\cdot x}$, 
for $k\in\Z^2_0=\Z^2\setminus\set{0}$, $x\in\T^2$, be the orthonormal basis of $\dot L^2(\T^2)$ diagonalising the Laplace operator,
and recall that Sobolev spaces (of zero average distributions) are characterised as follows:
\begin{equation*}
	\forall \alpha\in\R, \quad \dot H^\alpha(\T^2)=
	\set{u\in C^\infty(\T^2)': \norm{u}_{\dot H^\alpha}^2=\sum_{k\in\Z^2_0} |k|^{-2\alpha} |\hat u_k|^2<\infty},
\end{equation*}
where $\hat u_k=\brak{u,e_k}$, the brackets denoting (complex) $\dot L^2$-based duality couplings from now on.
We will also denote by $\M(\T^2)$ the linear space of finite signed measures on $\T^2$, which is
continuously embedded in $H^\alpha(\T^2)$ for any $\alpha<-1$, since Fourier coefficients of measures
are uniformly bounded by 1.

The Green function of the Laplace operator with zero average, $G=(-\Delta)^{-1}$, is the unique solution of
\begin{equation*}
\forall x,y\in\T^2 \quad -\Delta_x G(x,y)=\delta_y(x)-1, \quad \int_{\T^2} G(x,y)dx=0;
\end{equation*}
we recall that $G$ is a symmetric function, and moreover it is translation invariant.
It has the explicit representation in Fourier series
\begin{equation*}
	G(x,y)=G(x-y)=\sum_{k\in\Z^2_0} \frac{e_k(x-y)}{4\pi^2|k|^2},
\end{equation*}
and moreover it can be expressed as the sum of Green's function on the whole plane and a bounded function,
\begin{equation}\label{greentorus}
    G(x,y)=-\frac{1}{2\pi} \log d(x,y) +g(x,y),
\end{equation}
with $g(x,y)\in C^0_{sym}(\T^{2\times 2})$. The latter representation holds more generally on any
compact Riemannian surface without boundary (see \cite{aubin}), and it
can be recovered comparing the $G(x,y)$ to the solution of $-\Delta_x u(x)=\delta_y(x)$ on a
small ball centred in $y$ with Dirichlet boundary conditions.

\subsection{Canonical Gibbs Ensembles of Point Vortices}\label{ssec:gibbsensembles}

We now define a Gibbsian canonical ensemble for point vortices distributions of vorticity. 
Let $N\in\N$ (the number of vortices), $\gamma>0$, $\beta\geq 0$ (the \emph{inverse temperature}), 
$\xi_1,\dots,\xi_N\in\R$ (the intensities of vortices), $x_1,\dots,x_N\in\T^2$ (the positions of vortices) and the Hamiltonian
\begin{equation*}
	H(x_1,\dots,x_N)=\sum_{i< j}^N \xi_i\xi_j G(x_i,x_j)
\end{equation*}
on the phase space $\T^{2\times N}$. In what follows, intensities will always be given as 
$\xi_i=\frac{\sigma_i}{\sqrt{\gamma N}}$, with signs $\sigma_i=\pm1$, according to the central limit scaling.
The arguments of the present Section works for any choice of the sequence of signs
$\sigma_1^N,\dots \sigma_N^N=\pm 1$ for $N\geq 1$: we assume that such a choice is performed once and for all,
and drop the apex $N$ to ease notation.
Let us consider the measure on $\T^{2\times N}$ defined by
\begin{equation}\label{gibbsmeasure}
	\nu_{\beta,\gamma,N}(dx_1,\dots,dx_N)= \frac{1}{Z_{\beta,\gamma,N}} \exp\pa{-\beta H(x_1,\dots,x_N)}dx_1,\dots,dx_N,
\end{equation}
with $Z_{\beta,\gamma,N}$, the \emph{partition function}, being the constant such that $\nu_{\beta,\gamma,N}$ 
is a probability measure. 
Notice that, even if it is not made explicit, the partition function depends also on the choice of signs $\sigma_i$.
The measure $\nu_{\beta,\gamma,N}$ is usually referred to as the \emph{canonical Gibbs' measure}.
Since the potential $G$ has a logarithmic singularity, the existence of such measure,
or equivalently the finiteness of $Z_{\beta,\gamma,N}$, is not completely trivial.
For the sake of completeness, and since we could not find a reference matching our setting,
we report the proof. The issue is addressed in \cite{lionsbook} on bounded domains of $\R^2$ for vortices with equal intensities.
The technique we apply was first introduced in \cite{deutschlavaud} in the similar case of a log-gas:
a more refined computation deriving the asymptotics in $N$ in the latter setting can be found in \cite{gunsonpanta}.

\begin{prop}\label{prop:zgreentorus}
For any choice of $\gamma>0$, $\beta\geq0$, and signs $\sigma_i=\pm 1$ as above, if $N>\frac{\beta}{\pi\gamma}$
then $Z_{\beta,\gamma,N}<\infty$, and the measure $\nu_{\beta,\gamma,N}$ is thus well-defined.	
\end{prop}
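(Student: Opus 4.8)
The plan is to show that the only obstruction to integrability comes from the logarithmic singularity of $G$ along the diagonals $\set{x_i=x_j}$, and that once this is isolated the integral defining $Z_{\beta,\gamma,N}$ is a superposition of locally integrable two-dimensional singularities whose mutual accumulation is controlled exactly by the hypothesis $N>\frac{\beta}{\pi\gamma}$. First I would strip off the regular part of the interaction. Using the decomposition \eqref{greentorus}, $G(x_i,x_j)=-\frac1{2\pi}\log d(x_i,x_j)+g(x_i,x_j)$ with $g$ continuous on the compact set $\T^{2\times2}$, hence bounded. Since $\abs{\xi_i\xi_j}=\frac1{\gamma N}$, the contribution of $g$ to the Hamiltonian obeys $\abs{\sum_{i<j}\xi_i\xi_j g(x_i,x_j)}\le \frac{N-1}{2\gamma}\norm{g}_\infty$, so that $\exp(-\beta\sum_{i<j}\xi_i\xi_j g)$ is bounded above by a constant depending on $N,\beta,\gamma$ (finite for each fixed $N$). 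Writing $\alpha:=\frac{\beta}{2\pi\gamma N}$, finiteness of $Z_{\beta,\gamma,N}$ is therefore equivalent to
\[
	\int_{\T^{2N}}\prod_{i<j}d(x_i,x_j)^{\alpha\sigma_i\sigma_j}\,dx_1\cdots dx_N<\infty.
\]

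Next I would separate harmless from dangerous pairs. For equal signs $\sigma_i\sigma_j=+1$ the factor $d(x_i,x_j)^{\alpha}\le(\diam\T^2)^{\alpha}$ is bounded, so only the pairs of opposite sign produce the inverse-power singularities $d(x_i,x_j)^{-\alpha}$. A single such binary collision is integrable in two dimensions precisely when $\alpha<2$, which is far weaker than the assumption; the real difficulty is the \emph{simultaneous} collision of several vortices at one point. Note that the hypothesis $N>\frac{\beta}{\pi\gamma}$ is exactly $\alpha<\frac12$, a fourfold margin below the binary threshold $\alpha<2$, which already signals that the argument will be sufficient but not sharp.

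For the crux I would control coincidences by a scale decomposition of $\T^{2N}$ according to the cluster structure of the configuration: at each dyadic scale $r=2^{-\ell}$ one groups the vortices into clusters, and estimates the volume available to arrange the $s$ sub-clusters of a given cluster (of order $r^{2(s-1)}$) against the size of the weight at that scale, which behaves like $r^{\alpha c}$ with $c=\tfrac12\pa{Q^2-\sum_t Q_t^2}$ the net charge-product exchanged between the sub-clusters; one then sums the resulting geometric series over scales, following the technique introduced in \cite{deutschlavaud}. The essential point, and the main obstacle, is that the like-sign factors $d^{+\alpha}$ must be retained rather than discarded: it is the repulsion between vortices of the same sign that forbids arbitrarily many of them from concentrating at a single point, and dropping these factors makes the many-body collapse genuinely uncontrollable (one is then forced into a hypothesis of the form $\beta\lesssim\gamma$, independent of $N$). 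The worst contributions come from the near-collision of a balanced, charge-neutral pair of oppositely signed clusters, and the scale series converges under $\alpha<\frac12$, which gives the claim. I therefore expect the resulting threshold to be non-sharp, consistent with $N>\frac{\beta}{\pi\gamma}$ being only a sufficient condition for the well-posedness of $\nu_{\beta,\gamma,N}$.
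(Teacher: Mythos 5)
Your reduction is fine up to the crux: stripping off $g$ via \eqref{greentorus} and reducing to $\int_{\T^{2N}}\prod_{i<j}d(x_i,x_j)^{\alpha\sigma_i\sigma_j}$ with $\alpha=\frac{\beta}{2\pi\gamma N}$ is correct, and your diagnosis is exactly right on two points: the danger is the simultaneous collapse of many vortices, not binary collisions, and the like-sign factors $d^{+\alpha}$ must be retained, since discarding them forces a condition of the form $\beta\lesssim\gamma$ independent of $N$. The genuine gap is that the central step of your proof --- the dyadic cluster decomposition --- is named but never carried out, and the one quantitative estimate you give does not close on its own. At a merging scale $r$, your own accounting for a neutral pair of sub-clusters of charges $\pm Q$ gives volume times weight $r^{2-\alpha Q^2}$, whose exponent is negative for every fixed $\alpha>0$ once $Q$ is large; the argument can only survive if the like-sign factors $r'^{\,\alpha\binom{Q}{2}}$ paid at the finer scales where those sub-clusters were assembled are propagated through an induction over the whole hierarchy, together with control of the combinatorial (entropy) factors coming from the choice of cluster structures at each scale. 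You never set up this bookkeeping. Moreover, the claimed convergence threshold $\alpha<\frac12$ is not derived from anything in the sketch --- no estimate in it produces the constant $\frac12$; it appears to be read off from the hypothesis of the statement rather than proved. As written, the proof is missing precisely at its decisive step.

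For comparison, the paper's proof --- which is the actual technique of \cite{deutschlavaud}, refined in \cite{gunsonpanta} --- implements your idea that same-sign repulsion must compensate opposite-sign attraction, but with no multiscale analysis at all. After a Cauchy--Schwarz step to decouple $g$, one pairs each positive vortex $y_i$ with its nearest negative vortex $z_i$ as in \eqref{minimaldipoles}; the triangle inequality then gives $d(y_i,y_j)\le d(y_i,z_i)+d(y_j,z_i)\le 2\,d(y_j,z_i)$ (and similarly for $d(z_i,z_j)$), so each same-sign numerator factor cancels an opposite-sign denominator factor up to a factor $2$, leaving only the nearest-neighbour dipole factors $\prod_i d(y_i,z_i)^{-\alpha'}$ with $\alpha'$ the (Hölder-doubled) exponent. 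These decouple into order $N$ independent two-dimensional integrals, each finite as soon as $\alpha'<2$, which holds comfortably under $N>\frac{\beta}{\pi\gamma}$. If you insist on the multiscale route, it can be made rigorous and even yields the sharp dipole threshold, but it is a substantially longer argument; the pairing trick is the short path to the stated (non-sharp) condition.
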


\begin{proof}
 By (\ref{greentorus}) and H\"older's inequality,
 \begin{equation*}
 	Z_{\beta,\gamma,N} \leq \pa{\int_{\T^{2N}} \prod_{i<j} d(x_i,x_j)^{\frac{\beta\xi_i\xi_j}{\pi}}}^{1/2}
 	\pa{\int_{\T^{2N}} \prod_{i<j} e^{-2\beta\xi_i\xi_jg(x_i,x_j)}}^{1/2},
 \end{equation*}
 where the second factor on the right-hand side is bounded (by a constant depending on all parameters including $N$)
 since $g$ is. Let us now turn to the first term. We relabel the variables as follows: $y_1,\dots y_k$ are the
 ones with positive intensities, and $z_1,\dots z_{n-k}$ the negative ones; moreover, $y_i$ and $z_i$ are
 couples of closest positive-negative neighbours, so that
 \begin{equation}\label{minimaldipoles}
 	d(y_i,z_i)\leq d(y_i,z_j)\wedge d(y_j,z_i) \quad \forall j\geq i.
 \end{equation}
 We accordingly split
 \begin{equation*}
 	\prod_{i<j} d(x_i,x_j)^{\frac{\beta \sigma_i \sigma_j}{\pi \gamma N}}
    =\pa{\frac{\prod_{i<j} d(y_i,y_j)
    	\prod_{i<j} d(z_i,z_j)}
        {\prod_{i,j} d(y_i,z_j)}}^{\frac{\beta}{\pi \gamma N}},
    \end{equation*}
  the indices running over all admissible values. By definition and the triangular inequality,
  \begin{align*}
  	d(y_i,y_j)&\leq d(y_i,z_i)+d(y_j,z_i)\leq 2d(y_j,z_i),\\
  	d(z_i,z_j)&\leq d(y_i,z_i)+d(y_i,z_j)\leq 2d(y_j,z_i),
  \end{align*}
  so that we can use the terms in the numerator to cancel all terms in the denominator
  save for the ones corresponding to closest neighbours 
  (if $k\neq N/2$ some terms in the numerator are left over, and we bound them with constants):
  \begin{equation*}
  	\prod_{i<j} d(x_i,x_j)^{\frac{\beta \sigma_i \sigma_j}{\pi \gamma N}}
  	\leq C \pa{\prod_{1\leq i\leq k\wedge n-k} d(y_i,z_i)}^{-\frac{\beta}{\pi \gamma N}},
  \end{equation*}
  where $C$ is again a constant depending on all parameters. As soon as $N>\frac{\beta}{2\pi\gamma}$,
  factors of the latter product are integrable, thus concluding the proof.  
\end{proof}

\begin{defi}
	The random measure $\mu_{\beta,\gamma}^N$ is the law of
	\begin{equation*}
	\omega_{\beta,\gamma}^N=\sum_{i=1}^N \xi_i (\delta_{x_i}-1),
	\end{equation*}
	as a random variable taking values in $\M(\T^2)$, where positions $x_1,\dots x_n$ are sampled under $\nu_{\beta,\gamma,N}$,
	whenever the latter is well-defined. 
\end{defi}
In dealing with limits as $N$ goes to infinity, Gibbs measure will always be
(ultimately) defined, so we will ignore the issue henceforth in this section. Finally, let us note that
$\omega_{\beta,\gamma}^N$ can be regarded as random variables in $\dot H^s(\T^2)$ for all $s<-1$,
since signed measures have uniformly bounded Fourier coefficients.

\subsection{Energy-Enstrophy Gaussian Measures}\label{ssec:energyenstrophy}

For $\gamma>0$ and $\beta\geq 0$, let $\omega_{\beta,\gamma}$ be the centred, zero averaged,
Gaussian random field on $\T^2$ with covariance
\begin{equation*}
	\forall f,g\in \dot L^2(\T^2), \quad 	
	\expt{\brak{\omega_{\beta,\gamma},f}\brak{\omega_{\beta,\gamma},g}}=\brak{f,Q_{\beta,\gamma}g}, 
	\quad Q_{\beta,\gamma}=(\gamma-\beta\Delta)^{-1}.
\end{equation*}
Equivalently, $\omega_{\beta,\gamma}$ is a centred Gaussian stochastic process indexed by $\dot L^2(\T^2)$ with the 
specified covariance. Since the embedding of $Q_{\beta,\gamma}^{1/2}\dot L^2(\T^2)$ into $\dot H^s(\T^2)$ is Hilbert-Schmidt for all $s<-1$,
$\omega_{\beta,\gamma}$ can be identified with a random distribution taking values in the latter spaces (see \cite{dpz}). 
The special case $\beta=0$ ($\gamma=0$ will not be included in our discussion) is the \emph{white noise} on $\T^2$. 
We will denote by $\mu_{\beta,\gamma}$ the law
of $\omega_{\beta,\gamma}$ on $\dot H^s(\T^2)$, any $s<-1$. This measure is the one we formally defined in (\ref{energyenstrophymeasure}):
we will provide a rigorous interpretation of that expression in this paragraph. 
The Gaussian random distributions we just introduced are best understood in terms of Fourier series:
we can write
\begin{equation*}
	\omega_{\beta,\gamma}=\sum_{k\in\Z^2_0} \hat\omega_{\beta,\gamma,k} e_k, 
	\quad \text{ where }
	\hat\omega_{\beta,\gamma,k}=\brak{\omega_{\beta,\gamma},e_k}\sim N_\C\pa{0,\frac{4\pi^2 |k|^2}{\beta+4\pi^2 |k|^2\gamma}}
\end{equation*}
are independent $\C$-valued Gaussian variables, and the Fourier expansion thus converges in $L^2\pa{H^s(\T^2),\mu_{\beta,\gamma}}$
for $s<-1$.
The measure $\mu_{\beta,\gamma}$ is also characterised by its Fourier transform (characteristic function) on $\dot H^s(\T^2)$:
for any $f\in \dot H^{-s}(\T^2)$,
\begin{equation}\label{gaussianchartorus}
 \int e^{\imm\brak{\omega,f}} d\mu_{\beta,\gamma}(\omega)=\exp\pa{-\frac{1}{2}\sum_{k\in\Z^2_0} 
 	\frac{4\pi^2 |k|^2 |\hat f_k|^2}{\beta+4\pi^2 |k|^2\gamma}}.
\end{equation}

The main result of this section is convergence of Gibbs ensemble of vortices $\mu_{\beta,\gamma}^N$
defined above to the energy-enstrophy measure $\mu_{\beta,\gamma}$.
Let us first provide some further insight on the analogy between those random measures,
first pointed out by Kraichnan (\cite{kraichnan}). We begin by recalling an equivalent definition of $\mu_{\beta,\gamma}$:
for a smooth vorticity distribution $\omega$, energy is given by
\begin{equation*}
2E(\omega)=-\brak{\omega,\Delta^{-1}\omega}=\sum_{k\in\Z^2_0} \frac{|\hat \omega_k|^2}{4\pi^2|k^2|},
\end{equation*}
which does not make sense as a random variable if instead $\omega$ has white noise law $\mu_{0,\gamma}=\mu_\gamma$,
since in that case $\hat\omega_k$'s are i.i.d. Gaussian variables, and the series diverges almost surely.
However, one can define a \emph{renormalised energy} by means of normal ordering:
\begin{equation}\label{normenergy}
2\wick{E}=\lim_{K\rightarrow\infty}  \sum_{|k|\leq K} \frac{\wick{\hat \omega_k\hat\omega_k^\ast}}{4\pi^2|k^2|}=
\lim_{K\rightarrow\infty} \sum_{|k|\leq K} \pa{\frac{|\hat \omega_k|^2}{4\pi^2|k^2|} 
	-\int \frac{|\hat \omega_k|^2}{4\pi^2|k^2|} d\mu_\gamma(\omega)} 
\end{equation}
where the limit holds in $L^2(\mu_{\gamma})$ (see \cite{ardfhk} and \autoref{thm:clttorus} below), and it defines an element of
the second Wiener chaos $H^{\wick{2}}(\mu_{\gamma})$. As a consequence, $\wick{E}$ can be expressed
as a double It\={o}-Wiener stochastic integral with respect to the white noise $\mu_{\gamma}$,
the kernel being naturally Green's function $G$:
\begin{equation*}
	2\wick{E}(\omega)=\int_{\T^{2\times 2}} G(x,y) \wick{d\omega(x)d\omega(y)}.
\end{equation*}

\begin{lem}\label{lem:energyenstrophy}
 The probability measure on $\dot H^s(\T^2)$, any $s<-1$, defined by density as
 \begin{equation}\label{normenergymeasure}
 d\tilde\mu_{\beta,\gamma}=\frac{1}{Z_{\beta,\gamma}}e^{-\beta\wick{E}(\omega)}d\mu_\gamma(\omega),
 \quad Z_{\beta,\gamma}=\int e^{-\beta\wick{E}(\omega)}d\mu_\gamma(\omega),
 \end{equation}
 is well-posed. It coincides with the energy-enstrophy measure, $\tilde\mu_{\beta,\gamma}=\mu_{\beta,\gamma}$.
\end{lem}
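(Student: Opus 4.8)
The goal is to show that the measure $\tilde\mu_{\beta,\gamma}$ defined by the density $\frac{1}{Z_{\beta,\gamma}}e^{-\beta\wick{E}(\omega)}$ against white noise $\mu_\gamma$ is well-defined (finite partition function, integrable density) and coincides with the Gaussian field $\mu_{\beta,\gamma}$ of covariance $(\gamma-\beta\Delta)^{-1}$. My strategy would be to verify both claims at the level of Fourier coefficients, where everything decouples into independent scalar Gaussians, and then upgrade the computation to the infinite-dimensional setting by a limiting argument. Let me sketch each step.

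**First, establish well-posedness of the density.** The renormalised energy $\wick{E}$ was defined in \eqref{normenergy} as an $L^2(\mu_\gamma)$ limit of the truncated sums $\wick{E}_K=\frac12\sum_{|k|\le K}\frac{1}{4\pi^2|k|^2}(|\hat\omega_k|^2-\EE_{\mu_\gamma}|\hat\omega_k|^2)$, living in the second Wiener chaos. To see that $e^{-\beta\wick{E}}$ is integrable I would first work with the truncation. Under $\mu_\gamma$ the $\hat\omega_k$ are independent complex Gaussians with $\EE|\hat\omega_k|^2=1/\gamma$, so
\begin{equation*}
	\int e^{-\beta\wick{E}_K}d\mu_\gamma=\prod_{0<|k|\le K}\frac{\exp\pa{\frac{\beta}{4\pi^2|k|^2\gamma}}}{1+\frac{\beta}{4\pi^2|k|^2\gamma}},
\end{equation*}
using the standard Laplace transform of a (centred) squared Gaussian; the centring from normal ordering produces exactly the exponential prefactor cancelling the mean. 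Each factor is $\exp\pa{\tfrac{\beta}{4\pi^2|k|^2\gamma}}/\pa{1+\tfrac{\beta}{4\pi^2|k|^2\gamma}}=1+O(|k|^{-4})$, so the infinite product converges absolutely. Hence $\sup_K\int e^{-\beta\wick{E}_K}d\mu_\gamma<\infty$; combined with the $L^2(\mu_\gamma)$-convergence $\wick{E}_K\to\wick{E}$ (and passing to an a.s.-convergent subsequence together with Fatou/uniform integrability) this shows $Z_{\beta,\gamma}=\int e^{-\beta\wick{E}}d\mu_\gamma$ is finite and positive, so $\tilde\mu_{\beta,\gamma}$ is a well-defined probability measure. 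The requirement $\beta\ge0$, $\gamma>0$ makes every factor positive and the product convergent.

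**Second, identify the two measures via characteristic functionals.** Two probability measures on $\dot H^s(\T^2)$ agree iff their characteristic functionals agree on $\dot H^{-s}$; the target formula is \eqref{gaussianchartorus}. So for fixed $f\in\dot H^{-s}(\T^2)$ I would compute $\int e^{\imm\brak{\omega,f}}d\tilde\mu_{\beta,\gamma}(\omega)=\frac1{Z_{\beta,\gamma}}\int e^{\imm\brak{\omega,f}-\beta\wick{E}(\omega)}d\mu_\gamma(\omega)$. Again I pass to the truncation $\wick{E}_K$ and to $f$ with finitely many Fourier modes, so the integral factorises over $k$ into one-dimensional Gaussian integrals of the form $\int e^{\imm(\hat f_k^\ast\hat\omega_k+\hat f_k\hat\omega_k^\ast)-a_k|\hat\omega_k|^2}d(\text{Gaussian})$ with $a_k=\tfrac{\beta}{4\pi^2|k|^2}$ up to the normalisation. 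Completing the square mode by mode, the quadratic-in-$\hat\omega_k$ exponent shifts the variance of the $k$-th mode from $1/\gamma$ to $\pa{\gamma+4\pi^2|k|^2\beta\cdot\tfrac{1}{4\pi^2|k|^2}}^{-1}$; tracking constants, the resulting quadratic form in $\hat f_k$ is precisely $\tfrac{4\pi^2|k|^2|\hat f_k|^2}{\beta+4\pi^2|k|^2\gamma}$, while the $f$-independent Gaussian normalisations cancel against $Z_{\beta,\gamma}$. Taking $K\to\infty$ and then removing the finite-mode restriction on $f$ by dominated convergence (the density is uniformly integrable from the first step, and the truncated characteristic functionals converge pointwise) yields exactly \eqref{gaussianchartorus}. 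Since the characteristic functionals coincide, $\tilde\mu_{\beta,\gamma}=\mu_{\beta,\gamma}$.

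**Main obstacle.** The genuine analytic content is not the algebra, which decouples perfectly across Fourier modes, but justifying the two interchanges of limits: the convergence $\int e^{-\beta\wick{E}_K}\to\int e^{-\beta\wick{E}}$ (both for the partition function and with the oscillatory factor $e^{\imm\brak{\omega,f}}$ inserted) as $K\to\infty$. The cleanest route is to prove uniform integrability of the family $\set{e^{-\beta\wick{E}_K}}_K$ in $L^1(\mu_\gamma)$ — equivalently a uniform bound on $\int e^{-\beta(1+\epsilon)\wick{E}_K}d\mu_\gamma$ for some $\epsilon>0$, which the explicit product above supplies as long as $\beta(1+\epsilon)/(4\pi^2\gamma)<1$ for the lowest mode — so that the a.s.\ (subsequential) limit $\wick{E}_K\to\wick{E}$ lifts to convergence of exponential moments. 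This is exactly the kind of second–Wiener–chaos exponential-integrability estimate that underlies the whole paper, and I would expect the finiteness of the product of $\pa{1+\tfrac{\beta}{4\pi^2|k|^2\gamma}}^{-1}e^{\ldots}$ to be the linchpin; everything else is bookkeeping of Gaussian normalisations that ultimately cancel against $Z_{\beta,\gamma}$.
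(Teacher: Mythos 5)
Your proposal is correct and takes essentially the same approach as the paper: well-posedness via the exponential integrability of the second-chaos variable $\wick{E}$ (which you make explicit through the product formula for the truncated Laplace transforms), and identification of the two measures by computing the characteristic functional mode by mode as a product of one-dimensional complex Gaussian integrals matching \eqref{gaussianchartorus}, which is exactly the paper's computation. The one blemish is the restriction \lcap $\beta(1+\epsilon)/(4\pi^2\gamma)<1$ for the lowest mode\rcap{} that you attach to the uniform-integrability step: no such condition is needed, since for $\beta\geq 0$ every factor $e^{\beta/(8\pi^2|k|^2\gamma)}\big/\big(1+\beta/(8\pi^2|k|^2\gamma)\big)$ is finite and of size $1+O(|k|^{-4})$ with no constraint on $\beta$ (a constraint of that type would arise only for a positive exponent, i.e.\ $\beta<0$), so your argument in fact proves the lemma for all $\beta\geq 0$, consistently with your own unconditional first step.
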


The computations we perform in the forthcoming proof find analogues in the infinite product representations
of energy-enstrophy measures given for instance in \cite{albeveriocruzeiro,bpp}.

\begin{proof}
 The variable $\wick{E}$ has exponential moments because it belongs to the second Wiener chaos, so the partition
 function is finite and the measure well-defined. If characteristic functionals $\expt{e^{\imm \brak{f,\omega}}}$
 coincide for all $f\in \dot H^{-s}(\T^2)$, the two measures coincide. Since under $\mu_{0,\gamma}$ the Fourier modes
 $\hat \omega_k$ are independent centred $\C$-valued Gaussian variables with variance $\gamma^{-1}$,
 we can compute
 \begin{align*}
 	\int e^{\imm\brak{f,\omega}-\beta\wick{E}(\omega)}d\mu_\gamma
 	&= \int \exp\pa{\sum_{k\in\Z^2_0} \imm \hat f_k \hat \omega_k^\ast
 		-\beta\frac{|\hat\omega_k|^2-\gamma^{-1}}{8\pi^2 |k|^2}} d\mu_\gamma\\
 	&=\prod_{k\in\Z^2_0} \int_\C \frac{\gamma}{2\pi}\exp\pa{\imm \hat f_k z^\ast 
 		-\beta\frac{|z|^2-\gamma^{-1}}{8\pi^2 |k|^2}-\frac{\gamma|z|^2}{2}} dz\\
 	&=\prod_{k\in\Z^2_0} \frac{4\pi^2 \gamma |k|^2}{\beta+4\pi^2 \gamma |k^2|}e^{\frac{\beta}{8\pi^2 |k^2|}}
 	\exp\pa{-\frac{|\hat f_k|^2}{2} \cdot \frac{4\pi^2|k|^2 }{4\pi^2\gamma |k^2|+\beta}},
 \end{align*}
 and since the partition function $Z_{\beta,\gamma}$ can be evaluated setting $f\equiv 0$ in the latter formula,
 \begin{equation*}
 	Z_{\beta,\gamma}^{-1}\int e^{\imm\brak{f,\omega}-\beta\wick{E}(\omega)}d\mu_\gamma=
 	\prod_{k\in\Z^2_0}\exp\pa{-\frac{|\hat f_k|^2}{2} \cdot \frac{4\pi^2|k|^2 }{4\pi^2\gamma |k^2|+\beta}},
 \end{equation*}
 where the right-hand side is the characteristic function of $\mu_{\beta,\gamma}$, (\ref{gaussianchartorus}).
\end{proof}

Looking back at point vortices, the Hamiltonian function $H$ can be seen as a renormalised energy to the extent that
it includes all mutual interactions save the ones of vortices with themselves. 
To make this intuition more precise, let us first recall that in the Gaussian case $\omega\sim\mu_{0,1}$ (white noise),
the double It\=o-Wiener integral of a smooth function $h\in C^\infty(\T^{2\times 2})$ is given by
\begin{equation}\label{doubleintegralsmooth}
	\int_{\T^{2\times 2}}h(x,y)\wick{d\omega(x)d\omega(y)}=\int_{\T^{2\times 2}}h(x,y)d\omega(x)d\omega(y)-\int_{\T^2}h(x,x)dx,
\end{equation}
where integration against $d\omega(x)d\omega(y)$ is understood as the (almost surely defined) integral against
the tensor product of the random distribution $\omega$ with itself (see \cite[Chapter 7]{janson}, which includes a
discussion on how Wick ordering in double stochastic integrals can be seen as removing singular self-interactions, \emph{cf.} Remark 7.27).
By continuity on $L^2(\T^{2\times 2})$ of the double It\=o integral, the renormalised energy can be expressed as
\begin{equation}
\label{energyitotorus}
2\wick{E}(\omega)
=\lim_{n\rightarrow\infty} \int_{\T^{2\times 2}} G_n(x,y)d\omega(x)d\omega(y),\\
\end{equation}
where $G_n\in C^\infty(\T^{2\times 2})$ are symmetric and vanish on the diagonal, $G_n$ converge to $G$ in $L^2(\T^{2\times 2})$,
and the limit holds in $L^2(\mu_{\gamma})$.

In the case of a point vortices cluster $\omega^N\sim \mu_{0,\gamma}^N$, one can define renormalised double integrals in an analogous way.
Considering centred distributions (as it is $\mu_{0,1}$) is essential in the forthcoming Lemma, 
and in the case of point vortices on $\T^2$ the condition is ensured if we consider the zero average setting.

\begin{lem}\label{lem:vorticesisometry}
 Let $\omega^N\sim \mu_{0,\gamma}^N$.
 On continuous functions $h\in C(\T^{2\times 2})$ with zero average in both variables and vanishing on the diagonal, 
 \emph{i.e.} $h(x,x)=0$ for all $x$, define the map
 \begin{equation*}
 	h\mapsto \int_{\T^{2\times 2}}h(x,y)d\omega^N(x)d\omega^N(y)=\sum_{i\neq j}\xi_i\xi_j h(x_i,x_j).
 \end{equation*}
 Since it holds
 \begin{equation*}
 	\expt{\pa{\sum_{i\neq j}\xi_i\xi_j h(x_i,x_j)}^2}\leq C_\gamma \norm{h}^2_{L^2(\T^{2\times 2})}
 \end{equation*}
 with $C_\gamma$ a constant independent of $N$, the map takes values in $L^2(\mu_{0,1}^N)$, and it extends by density to
 a bounded linear map which we will denote
 \begin{equation*}
 	\dot L^2(\T^{2\times 2}) \ni f\mapsto \int_{\T^{2\times 2}}f(x,y)\wick{d\omega^N(x)d\omega^N(y)} \in L^2(\mu_{0,1}^N).
 \end{equation*}
\end{lem}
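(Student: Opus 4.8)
The plan is to exploit that for $\beta=0$ the Gibbs measure $\nu_{0,\gamma,N}$ has density $Z^{-1}e^{-0\cdot H}\equiv 1$ on $\T^{2N}$, so that under $\mu_{0,\gamma}^N$ the positions $x_1,\dots,x_N$ are simply i.i.d.\ uniform on $\T^2$ (which has unit volume). First I would verify the identity asserted in the statement: expanding $\omega^N=\sum_i\xi_i(\delta_{x_i}-1)$ gives
\[
	\int_{\T^{2\times2}}h(x,y)\,d\omega^N(x)\,d\omega^N(y)
	=\sum_{i,j}\xi_i\xi_j\pa{h(x_i,x_j)-\int_{\T^2}h(x,x_j)\,dx-\int_{\T^2}h(x_i,y)\,dy+\iint_{\T^{2\times2}}h},
\]
and the last three integrals vanish because $h$ has zero average in each variable, while the diagonal terms $i=j$ drop out because $h(x,x)=0$, leaving exactly $\sum_{i\neq j}\xi_i\xi_j h(x_i,x_j)$. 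This is precisely where both hypotheses on $h$ enter.

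The core is the second-moment bound. I would expand
\[
	\expt{\pa{\sum_{i\neq j}\xi_i\xi_j h(x_i,x_j)}^2}
	=\sum_{i\neq j}\sum_{k\neq l}\xi_i\xi_j\xi_k\xi_l\,\expt{h(x_i,x_j)h(x_k,x_l)},
\]
and evaluate each expectation by integrating independently over the distinct positions occurring among the four slots $i,j,k,l$. The zero-average-in-each-variable property forces any term in which some index occupies exactly one slot to vanish, since integrating that lone variable yields $0$. Because $i\neq j$ and $k\neq l$ rule out any index filling three of the four slots (three slots would force both slots of a single factor to coincide), the only surviving configurations are the two ``double coincidences'' $\{i=k,\,j=l\}$ and $\{i=l,\,j=k\}$. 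In the first the expectation equals $\iint h(x,y)^2\,dx\,dy=\norm{h}_{L^2}^2$; in the second it equals $\iint h(x,y)h(y,x)\,dx\,dy$, bounded by $\norm{h}_{L^2}^2$ via Cauchy--Schwarz.

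It remains to count. Each surviving family is indexed by a pair $i\neq j$, hence by $N(N-1)$ choices, and carries weight $\xi_i^2\xi_j^2=(\gamma N)^{-2}$; thus each contributes at most $\frac{N(N-1)}{\gamma^2N^2}\norm{h}_{L^2}^2\leq\gamma^{-2}\norm{h}_{L^2}^2$, giving $C_\gamma=2\gamma^{-2}$, independent of $N$. The $N$-dependence cancels exactly, the $N^{-2}$ from the intensities being compensated by the $\sim N^2$ pairs. With the bound in hand, the map is bounded on the stated class of continuous functions; since these are dense in $\dot L^2(\T^{2\times2})$ (they are continuous, have zero average in each variable, and the diagonal they avoid is Lebesgue-null), it extends uniquely to a bounded linear map on the whole space, the desired renormalised double integral against $\omega^N$.

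The main obstacle is purely combinatorial bookkeeping: the careful classification of the four-index sum and the verification that only the two transposition pairings survive the zero-average integrations, together with the exact $N^2$-versus-$N^{-2}$ cancellation that keeps $C_\gamma$ uniform in $N$. The density extension is routine and entirely analogous to the construction of the double It\={o}--Wiener integral recalled in \eqref{doubleintegralsmooth}--\eqref{energyitotorus}.
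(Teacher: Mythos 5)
Your proof is correct and follows essentially the same route as the paper's: under $\mu_{0,\gamma}^N$ the positions are i.i.d.\ uniform, the second moment is expanded as a four-index sum, the zero-average condition kills every pairing except the double coincidences, and the $N(N-1)$ surviving pairs with weight $(\gamma N)^{-2}$ give a bound uniform in $N$. If anything your version is slightly more careful than the paper's, which writes the sum as $2\sum_{i<j}\xi_i\xi_j h(x_i,x_j)$ (implicitly taking $h$ symmetric, so that only the pairing $i=\ell$, $j=k$ appears), whereas you also account for the transposed pairing $\{i=l,\,j=k\}$ via Cauchy--Schwarz and verify explicitly the identity $\int h\,d\omega^N d\omega^N=\sum_{i\neq j}\xi_i\xi_j h(x_i,x_j)$, which the paper asserts without proof.
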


\begin{proof}
	For any function $h$ as above it holds
	\begin{align*}
	&\expt{\pa{2\sum_{i<j}^N \xi_i\xi_j h(x_i,x_j)}^2}
	= 4 \sum_{i<j}^N\sum_{\ell<k}^N \xi_i\xi_j \xi_\ell \xi_k \expt{h(x_i,x_j)h(x_\ell,x_k)}\\
	&\quad =\frac{4}{ \gamma^2 N^2}\sum_{i<j}^N \int_{\T^{2\times 2}} h(x,y)^2dxdy
	=2\frac{N-1}{\gamma^2 N}\int_{\T^{2\times 2}} h(x,y)^2dxdy,
	\end{align*}
	where the middle passage makes essential use of the zero average condition: all summands except the ones with $i=\ell,j=k$ vanish.
\end{proof}
This construction is analogous to the one of double stochastic integrals with respect to Gaussian measures (It\=o-Wiener integrals)
and Poisson point process; the above computation is also an important tool in \cite{flandoli}.
Define, in analogy with \eqref{energyitotorus}, the renormalised energy in the vortices ensemble $\mu_\gamma^N$ case as the 
renormalised double integral of the potential $G$ with respect to
$\mu_{\gamma}^N$, that is as a random variable in $L^2(\mu_{\gamma}^N)$:
by \autoref{lem:vorticesisometry}, considering approximations $G_n$ of $G$ as above,
we actually recover the Hamiltonian:
\begin{equation*}
		2\wick{E}(\omega^N)=\int_{\T^{2\times 2}} G(x,y)\wick{d\omega^N(x)d\omega^N(y)}=\sum_{i\neq j} \xi_i \xi_j G(x_i,x_j)=2H(x_1,\dots x_n).
\end{equation*}
The convergence of Hamiltonian functions of point vortices to the renormalised Gaussian energy in the case $\beta=0$
is an important part in the proof of the forthcoming main result of this Section.
\begin{thm}\label{thm:clttorus}
	Let $\beta/\gamma\geq 0$. It holds:
	\begin{enumerate}
		\item[(1)] $\lim_{N\rightarrow\infty} Z_{\beta,\gamma,N}=Z_{\beta,\gamma}$;
		\item[(2)] the sequence of $\M$-valued random variables $\omega^N\sim\mu_{\beta,\gamma}^N$ 
		converges in law on $\dot H^s(\T^2)$, any $s<-1$, to a random distribution $\omega\sim \mu_{\beta,\gamma}$,
		as $N\rightarrow\infty$;
		\item[(3)] the sequence of real random variables $H(\omega^N)$ converges in law to $\wick{E}(\omega)$ as 
		$N\rightarrow\infty$, with $\omega^N,\omega$ as in point (2).
	\end{enumerate}
\end{thm}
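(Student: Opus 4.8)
The plan is to reduce all three assertions to the base case $\beta=0$ together with a single uniform exponential-moment bound, and then transfer to $\beta>0$ by a change-of-measure argument. Since $\mu_{0,\gamma}^N$ is the law of $\omega^N$ when the positions $x_1,\dots,x_N$ are i.i.d. uniform on $\T^2$, I would first establish, under $\mu_{0,\gamma}^N$, the joint convergence in law of $(\omega^N,H(\omega^N))$ to $(\omega,\wick{E}(\omega))$ with $\omega\sim\mu_\gamma$ the white noise. For the field, the modes $\hat\omega^N_k=\sum_i\xi_i e_k(x_i)$ ($k\neq0$) are, by independence and $\int e_k=0$, centred with $\expt{|\hat\omega^N_k|^2}=\gamma^{-1}$ for every $N$, matching the white-noise covariance; the multivariate CLT applied to the i.i.d.\ bounded summands $\xi_i e_k(x_i)$ gives convergence of all finite-dimensional distributions, while the uniform bound $\sup_N\expt{\norm{\omega^N}_{\dot H^{s'}}^2}=\gamma^{-1}\sum_{k\neq0}|k|^{-2s'}<\infty$ (for $s<s'<-1$) together with the compact embedding $\dot H^{s'}\hookrightarrow\dot H^s$ yields tightness, hence $\omega^N\to\omega$ in law.

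For the Hamiltonian, \autoref{lem:vorticesisometry} is exactly the right tool. Approximating $G$ in $L^2(\T^{2\times2})$ by smooth symmetric kernels $G_n$ vanishing on the diagonal, the variables $\int G_n\wick{d\omega^N d\omega^N}=\sum_{i\neq j}\xi_i\xi_j G_n(x_i,x_j)$ are fixed continuous functionals of the modes $\hat\omega^N_k$ (up to diagonal corrections $\gamma^{-1}N^{-1}\sum_i e_{k+l}(x_i)$ that converge by the law of large numbers), so they converge in law to $\int G_n\wick{d\omega\,d\omega}$; the isometry estimate controls the remainder $\int(G-G_n)\wick{d\omega^N d\omega^N}$ in $L^2(\mu_{0,\gamma}^N)$ uniformly in $N$, and a three-$\epsilon$ argument gives $H(\omega^N)\to\wick{E}(\omega)$ jointly with the field.

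The crux is the uniform bound $\sup_N\expt{e^{-p\beta H(\omega^N)}}<\infty$ for some $p>1$, which supplies the uniform integrability needed throughout. Here I would use the splitting announced in the outline: writing $G=(G-G_m)+G_m$ with $G_m$ the Yukawa potential (Green function of $m^2-\Delta$) and passing to the zero-average representatives, one gets $H=H^{\mathrm{reg}}+H^{\mathrm{sing}}$, where both kernels are positive operators on $\dot L^2$. By H\"older it suffices to bound the two factors separately. The regular factor is immediate and uniform: the symbol of $G-G_m$ decays like $|k|^{-4}$, so the operator is trace class, and writing $2H^{\mathrm{reg}}=\langle\omega^N,(G-G_m)\omega^N\rangle-\gamma^{-1}(G-G_m)(0)$ with the quadratic form nonnegative gives $e^{-2p\beta H^{\mathrm{reg}}}\le e^{p\beta\gamma^{-1}(G-G_m)(0)}$ pointwise, uniformly in $N$ (this is the precise sense in which no normal ordering is needed). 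The genuine obstacle, which I expect to be the hardest step, is the singular factor $\expt{e^{-2p\beta H^{\mathrm{sing}}}}$, where positivity fails because the self-energy $G_m(0)$ is infinite. I would treat it as a uniform version of \autoref{prop:zgreentorus}, exploiting two features absent from the bare estimate there: the zero-average of the kernel, so that the leading mean-field term in a Mayer/cluster expansion vanishes and the first surviving contribution is $\binom N2$ times $O(N^{-2})\norm{G_m}_{L^2}^2=O(1)$; and the short range of $G_m$, with only the logarithmic diagonal singularity inherited from \eqref{greentorus}, so that opposite-sign close encounters are integrable once $N$ is large and, crucially, confined, which is what removes the $N$-dependence of the constant. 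Bounding the Yukawa partition function by comparison with its $m\to\infty$ (free-gas) limit then yields the uniform estimate.

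With uniform integrability established, the conclusions follow quickly. Assertion (1) is immediate, since $Z_{\beta,\gamma,N}=\expt{e^{-\beta H(\omega^N)}}$ converges to $\expt{e^{-\beta\wick{E}(\omega)}}=Z_{\beta,\gamma}\in(0,\infty)$ by the $\beta=0$ convergence of $H(\omega^N)$ together with uniform integrability, with finiteness of the limit from \autoref{lem:energyenstrophy}. For (2) and (3), for bounded continuous $\Phi$ on $\dot H^s$ and bounded continuous $\psi$ on $\R$ I would write
\[
\expt{\Phi(\omega^N)}_{\mu_{\beta,\gamma}^N}=\frac{\expt{\Phi(\omega^N)e^{-\beta H(\omega^N)}}_{\mu_{0,\gamma}^N}}{Z_{\beta,\gamma,N}},\qquad \expt{\psi(H(\omega^N))}_{\mu_{\beta,\gamma}^N}=\frac{\expt{\psi(H(\omega^N))e^{-\beta H(\omega^N)}}_{\mu_{0,\gamma}^N}}{Z_{\beta,\gamma,N}},
\]
and pass to the limit in numerator and denominator using the joint convergence $(\omega^N,H(\omega^N))\to(\omega,\wick{E}(\omega))$ and uniform integrability of $e^{-\beta H(\omega^N)}$. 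The limits are the corresponding integrals against $\mu_{\beta,\gamma}=\tilde\mu_{\beta,\gamma}$ by \autoref{lem:energyenstrophy}, which gives (2) and (3).
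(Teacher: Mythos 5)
Your overall architecture is the same as the paper's (reduce everything to the $\beta=0$ case plus a uniform-in-$N$ bound on exponential moments of $H$, then transfer to $\beta>0$ by uniform integrability), and the parts you carry out in detail are sound: the $\beta=0$ convergence of $(\omega^N,H(\omega^N))$ and the final change-of-measure step are essentially the paper's proof (the paper handles point (3) slightly differently, via the Laplace transform identity $\expt{e^{\alpha H}}_{\mu^N_{\beta,\gamma}}=Z_{\beta-\alpha,\gamma,N}/Z_{\beta,\gamma,N}$ and L\'evy continuity, but your test-function argument works given joint convergence and uniform integrability). The gap is in the step you yourself flag as the crux, and it is not where you think it is: it is the \emph{regular} part, not the singular one, that fails. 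Your pointwise bound $2H^{\mathrm{reg}}=\brak{\omega^N,V_m\omega^N}-\gamma^{-1}V_m(0,0)\geq-\gamma^{-1}V_m(0,0)$ gives $\expt{e^{-2p\beta H^{\mathrm{reg}}}}^{1/2}\leq e^{\frac{p\beta}{2\gamma}V_m(0,0)}\simeq m^{\frac{p\beta}{4\pi\gamma}}$, which diverges as $m\to\infty$. Meanwhile the singular factor, bounded as in \autoref{prop:zyukawatorus} by $\pa{1+C(\log m)^2/m^2}^{N}\leq\exp\pa{CN(\log m)^2/m^2}$, is uniformly bounded only if $m(N)\gtrsim\sqrt{N}$ up to logarithms. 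So the two requirements are incompatible: for bounded $m$ the singular factor blows up exponentially in $N$, and for $m\to\infty$ your regular factor blows up polynomially in $m$; no choice of $m(N)$ closes the argument. The positivity bound discards exactly the cancellation that makes the paper's proof work: in \autoref{prop:Fcomplexboundstorus} the zeroth-order term of the sine-Gordon expansion is $e^{\frac{\beta}{2\gamma}V_m(0,0)}\expt{e^{-\frac{\beta}{2\gamma}\norm{F_m}_2^2}}=O(1)$, because the self-energy factor is compensated by the Gaussian exponential moment \eqref{Fexpmomentstorus} of \autoref{lem:Fboundstorus}; dropping the quadratic form replaces that expectation by $1$ and loses this compensation. (Relatedly, ``no normal ordering required'' in the paper means only that $V_m(0,0)<\infty$, so $e^{\imm F_m(x)}$ is well defined without Wick ordering; it does not license discarding the self-energy constant.)

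Your fallback for the singular part does not repair this. If you keep $m$ fixed and try to bound the Yukawa partition function uniformly in $N$ by a ``uniform version of \autoref{prop:zgreentorus}'' or a Mayer/cluster expansion, you are facing the original problem in disguise: $W_m$ has the same logarithmic short-distance singularity as $G$, and the constant produced by the dipole-pairing argument of \autoref{prop:zgreentorus} genuinely depends on $N$. The observation that the mean-field term vanishes and the second-order term is $\binom{N}{2}O(N^{-2})\norm{W_m}_{L^2}^2=O(1)$ controls the second moment of $H_{W_m}$, not the exponential moment $\expt{e^{-2p\beta H_{W_m}}}$, and cluster expansions for two-dimensional log-gases converge only in restricted parameter regimes, which is precisely what a proof valid for all $\beta/\gamma\geq0$ must avoid. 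The paper's resolution is to make the singular part \emph{small} by sending $m=N^a\to\infty$ (so its contribution is $1+o(1)$ by \autoref{prop:zyukawatorus}) while controlling the regular part not pointwise but after integration over positions, expanding $\prod_j E_j$ around $\Es^N$ to order $p$ so that the error term $m^{\frac{\beta}{4\pi\gamma}}(\log m)^{2p}N^{-p/2}$ can be beaten by taking $p$ large; this interplay between $a$ and $p$ in \autoref{cor:zuniformbound} is the actual content of the theorem's hard step, and it has no analogue in your proposal.
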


In \autoref{prop:zgreentorus} we needed to impose that $\beta/\gamma$ be small in order for the Gibbs measure to exist.
Let us remark once again that the constraint depended on $N$, and was always satisfied for large enough $N$.

\subsection{Potential Splitting and the Sine-Gordon transformation}\label{ssec:sinegordon}

In this paragraph we introduce the key tools in the proof of \autoref{thm:clttorus}.
The main issue is the logarithmic singularity of the Green function $G$. To deal with it we will decompose $G$
in two parts, a smooth approximation of $G$ and a remainder retaining logarithmic singularity: for $m>0$,
\begin{equation}\label{potentialsplitting}
	G=-\Delta^{-1}=\pa{-\Delta^{-1}-(m^2-\Delta)^{-1}}+(m^2-\Delta)^{-1}:= V_m+W_m.
\end{equation}
Physically, the smooth part $V_m$ corresponds to the long-range part of the potential, and the singular part $W_m$ to 
short-range interactions. We will also denote
\begin{equation*}
	H=H_{V_m}+H_{W_m}=\sum_{i< j}^N \xi_i\xi_j V_m(x_i,x_j) + \sum_{i< j}^N \xi_i\xi_j W_m(x_i,x_j),
\end{equation*}
the relative splitting of the Hamiltonian. In terms of Fourier series,
\begin{equation*}
   W_m(x,y)=\sum_{k\in\Z^2_0} \frac{e_k(x-y)}{m^2+4\pi^2|k|^2}, \quad
   V_m(x,y)=\sum_{k\in\Z^2_0} \frac{m^2 e_k(x-y)}{4\pi^2|k|^2(m^2+4\pi^2|k|^2)}.
\end{equation*}
The Green function $W_m$ is called the 2-dimensional \emph{Yukawa potential} 
or \emph{screened Coulomb potential} with mass $m$
(as opposed to the \emph{Coulomb potential} $G$). 

We will regard the regular part of the Hamiltonian corresponding to $V_m$ as the covariance
of a Gaussian field. The idea, dating back to \cite{samuel}, originated as a connection between 
the classical Coulomb gas theory and sine-Gordon field theory (hence the name):
it will allow us to analyse the convergences in \autoref{thm:clttorus} by standard
Gaussian computations, up to a remainder term involving the Yukawa potential $W_m$
(whose associated partition function we bound in \autoref{ssec:partitionfunctions}).
We thus define $F_m$ as the centred Gaussian field on $\T^2$ with covariance kernel $V_m$, that is
\begin{equation}
\forall f,g\in \dot L^2(\T^2), \quad 	
\expt{\brak{F_m,f}\brak{F_m,g}}=\brak{f,\pa{-\Delta^{-1}-(m^2-\Delta)^{-1}} g}.
\end{equation}
The remainder of this paragraph deals with properties of $F_m$. The reproducing kernel Hilbert space is
\begin{equation*}
	\sqrt{-\Delta^{-1}-(m^2-\Delta)^{-1}}\dot L^2(\T^2)\subseteq \dot H^2(\T^2),
\end{equation*} 
so that $F_m$ has a $\dot H^s(\T^2)$-valued version for all $s<1$, into which $\dot H^2(\T^2)$ has Hilbert-Schmidt embedding.
As a consequence, by Sobolev embedding, $F_m$ has a version taking values in $\dot L^p(\T^2)$ for all $p\geq 1$.

The field $F_m$ can also be evaluated at points $x\in\T^2$: the coupling $F_m(x):=\brak{\delta_x,F_m}$ is
defined as the series, converging in $L^2(F_m)$ \emph{uniformly in} $x\in\T^2$,
\begin{equation*}
	\brak{\delta_x,F_m}=\sum_{k\in\Z^2_0} e^{2\pi \imm x\cdot k} \hat F_{m,k},
	\quad \hat F_{m,k}=\brak{e_k,F_m}\sim N_\C\pa{0,\frac{m^2}{4\pi^2 |k|^2\pa{m^2+4\pi^2|k|^2}}}.
\end{equation*}
In other terms, $x\mapsto F_m(x)$ is a measurable random field, and 
$F_m(x)$ are centred Gaussian variables of variance $V_m(x,x)=V_m(0,0)$.
A straightforward application of Kolmogorov continuity theorem shows that there exists a version of $F_m(x)$
which is $\alpha$-H\"older for all $\alpha<1/2$.

\begin{lem}\label{lem:Fboundstorus}
	For any $\alpha>0$, $p\geq 1$ and $m\rightarrow \infty$,
	\begin{align}
	\label{Fmomentstorus}
	\expt{\norm{F_m}_p^p}&\simeq_p (\log m)^{p/2}\\
	\label{Fexpmomentstorus}
	\expt{\exp\pa{-\alpha\norm{F_m}_2^2}}&\simeq m^{-\frac{\alpha}{2\pi}}.
	\end{align}
\end{lem}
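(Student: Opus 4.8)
The plan is to reduce both estimates to the single asymptotic
\[
  \sigma_m^2 := V_m(0,0)=\sum_{k\in\Z^2_0}\lambda_k=\frac{1}{2\pi}\log m+O(1),
  \qquad \lambda_k:=\frac{m^2}{4\pi^2|k|^2(m^2+4\pi^2|k|^2)},
\]
as $m\to\infty$, where $\lambda_k$ is the variance of $\hat F_{m,k}$. For \eqref{Fmomentstorus} I would use that $F_m$ is stationary: its covariance $V_m(x,y)=V_m(x-y)$ depends only on the difference, so each evaluation $F_m(x)$ is a centred real Gaussian of variance $\sigma_m^2$. By Tonelli and the scaling of Gaussian absolute moments,
\[
  \expt{\norm{F_m}_p^p}=\int_{\T^2}\expt{\abs{F_m(x)}^p}\,dx=c_p\,\sigma_m^p,\qquad c_p=\expt{\abs{g}^p},\ g\sim N(0,1),
\]
whence $\expt{\norm{F_m}_p^p}\simeq_p\sigma_m^p\simeq_p(\log m)^{p/2}$; here only the order of magnitude of $\sigma_m^2$ is needed.

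For \eqref{Fexpmomentstorus} I would exploit that the Fourier modes $\hat F_{m,k}$ are independent complex Gaussians (subject only to the reality constraint $\hat F_{m,-k}=\overline{\hat F_{m,k}}$), so that $\norm{F_m}_2^2=\sum_{k\in\Z^2_0}\abs{\hat F_{m,k}}^2$ has the explicit Laplace transform
\[
  \expt{\exp\pa{-\alpha\norm{F_m}_2^2}}=\prod_{k\in\Z^2_0}\pa{1+2\alpha\lambda_k}^{-1/2},
\]
and therefore $-\log\expt{\exp(-\alpha\norm{F_m}_2^2)}=\tfrac12\sum_{k}\log(1+2\alpha\lambda_k)$. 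The key observation is that this sum agrees with $\alpha\sigma_m^2$ up to a bounded error: from $0\le x-\log(1+x)\le x^2/2$ for $x\ge 0$ one gets
\[
  0\le 2\alpha\sigma_m^2-\sum_{k\in\Z^2_0}\log(1+2\alpha\lambda_k)\le 2\alpha^2\sum_{k\in\Z^2_0}\lambda_k^2,
\]
and the last sum is bounded uniformly in $m$ because $\lambda_k\le(4\pi^2|k|^2)^{-1}$ forces $\lambda_k^2\lesssim|k|^{-4}$, which is summable on $\Z^2_0$. Hence $-\log\expt{\exp(-\alpha\norm{F_m}_2^2)}=\alpha\sigma_m^2+O(1)=\frac{\alpha}{2\pi}\log m+O(1)$, which is exactly the claimed equivalence $\simeq m^{-\alpha/(2\pi)}$.

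It remains to establish the asymptotics of $\sigma_m^2$, which is the only genuinely quantitative step and the one where the exact constant $\frac{1}{2\pi}$ (crucial for the exponent in \eqref{Fexpmomentstorus}) is produced. I would start from the partial-fraction identity
\[
  \lambda_k=\frac{1}{4\pi^2|k|^2}-\frac{1}{m^2+4\pi^2|k|^2},
\]
and compare the lattice sum $\sum_{k\in\Z^2_0}\lambda_k$ with the integral $\int_{\set{|\xi|\ge 1}}\frac{m^2}{4\pi^2|\xi|^2(m^2+4\pi^2|\xi|^2)}\,d\xi$ over $\R^2$. Since the integrand is radially decreasing, bounded by $(4\pi^2)^{-1}$ and integrable away from the origin, the discretisation error is $O(1)$ uniformly in $m$, and the precise value of the inner cut-off affects only the $O(1)$ term. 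Passing to polar coordinates the integral evaluates in closed form to $\frac{1}{2\pi}\pa{\tfrac12\log(m^2+4\pi^2)-\log(2\pi)}=\frac{1}{2\pi}\log m+O(1)$. I expect this sum-versus-integral comparison -- in particular controlling the logarithmically divergent contribution of the intermediate frequencies $1\lesssim|k|\lesssim m$ uniformly in $m$ -- to be the main obstacle; once it is in place, everything else is a routine Gaussian computation. (Alternatively, the same constant can be read off from the short-range expansion of the Yukawa kernel, $W_m(x,y)=-\tfrac1{2\pi}\log d(x,y)-\tfrac1{2\pi}\log m+O(1)$, combined with \eqref{greentorus}, since the logarithmic singularities of $G$ and $W_m$ cancel in $V_m=G-W_m$.)
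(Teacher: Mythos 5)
Your proof is correct and takes essentially the same route as the paper: Tonelli plus stationarity of $F_m$ for the moment bound, and for the exponential moments the Gaussian Laplace transform (your mode-by-mode product is exactly the paper's trace identity $\exp\{-\tfrac12\trace\log(1+2\alpha Q)\}$) combined with $x-\tfrac{x^2}{2}\le\log(1+x)\le x$ and the $m$-uniform summability of the squared mode variances. The only difference is that you work out in detail the asymptotic $V_m(0,0)=\tfrac{1}{2\pi}\log m+O(1)$ via a sum-versus-integral comparison -- in the sharper $O(1)$ form that the equivalence \eqref{Fexpmomentstorus} actually requires -- whereas the paper dismisses this as an explicit computation in Fourier series.
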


\begin{proof}
 Let us begin with moments: by Fubini-Tonelli theorem,
 \begin{equation*}
 	\expt{\norm{F_m}_p^p}=\int_{\T^{2}}\expt{|F_m(x)|^p}dx=c_p \int_{\T^{2}}V_m(x,x)^{p/2}dx=c_p V_m(0,0)^{p/2},
 \end{equation*}
 where $V_m(0,0)=\frac{1}{2\pi}\log m+o(\log m)$ can be checked by explicit computation in Fourier series.
 As for exponential moments, a standard Gaussian computation (see \cite[Proposizion 2.17]{dpz}) gives
 \begin{align*}
 \expt{\exp\pa{-\alpha \norm{F_m}_2^2}}
 &=\exp \set{-\frac{1}{2}\trace\pa{\log\pa{1+2\alpha \pa{-\Delta^{-1}-(m^2-\Delta)^{-1}}}}}\\
 &=\exp\pa{-\frac{1}{2}\sum_{k\in\Z^2_0}\log \pa{1+\frac{2\alpha m^2}{4\pi^2 |k|^2 (m^2+4\pi^2|k|^2)}}}\\
 &>\exp \pa{-\sum_{k\in\Z^2_0}\frac{\alpha m^2}{4\pi^2 |k|^2 (m^2+4\pi^2|k|^2)}}\\
 &=\exp\pa{-\alpha V_m(0,0)}\simeq m^{-\frac{\alpha}{2\pi}},  
 \end{align*}
 the other inequality descending from analogous computations using $\log(1+x)>x-\frac{x^2}{2}$, $x>0$, instead of
 the inequality $\log(1+x)<x$ we just applied. 
\end{proof}

Since it holds, for $s,t\in\R$,
\begin{equation*}
	\expt{e^{\imm s F_m(x)} e^{\imm t F_m(y)}}=e^{-\frac{s^2+t^2}{2}V_m(0,0)}e^{-stV_m(x,y)},
\end{equation*}
(and analogous expressions for $n$-fold products) we can transform the partition function relative to the regular part of the Hamiltonian $H_{V_m}$:
\begin{align}\label{sinegordontorus}
  \int_{\T^{2N}} & e^{-\beta H_{V_m}} dx_1\cdots dx_n\\ \nonumber
  &= \int_{\T^{2N}} \exp\pa{-\beta\sum_{i\neq j}^{N} \frac{\sigma_i\sigma_j}{2\gamma N} V_m(x_i,x_j)}dx_1\cdots dx_n\\ \nonumber
   &=e^{\frac{\beta}{2\gamma}V_m(0,0)}
   \expt{\int_{\T^{2N}} \exp\pa{-\imm \sqrt{\frac{\beta}{\gamma N}}\sum_{i=1}^{N} \sigma_i F_m(x_i)}dx_1\cdots dx_n}.
\end{align}
Rewriting the partition function in these terms is the first step in the analysis of $Z_{\beta,\gamma,N}$,
the next one being a control of the singular part of the potential, which we could not transform.
We deal with $W_m$ in the next paragraph: let us conclude the present one with the estimate we will use
on complex exponentials of $F_m$. It relies essentially on:

\begin{lem}\label{lem:exponentialintegral}
	If $f\in \dot L^4(\T^2)$, then
	\begin{equation*}
	\abs{\int_{\T^2} e^{\imm f(x)} dx-e^{-\frac{1}{2}\norm{f}^2_{2}}}\leq \frac{\norm{f}^3_3}{6}+\frac{\norm{f}^4_2}{8}.
	\end{equation*} 
\end{lem}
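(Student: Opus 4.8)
The plan is to control the quantity by a triangle inequality that inserts the second-order Taylor polynomial $1-\frac12\norm{f}_2^2$ as an intermediate term, splitting the estimate into two independent comparisons: first between the integral $\int_{\T^2}e^{\imm f}dx$ and this polynomial, and then a purely scalar comparison between the polynomial and the target exponential $e^{-\frac12\norm{f}_2^2}$. The hypothesis $f\in\dot L^4(\T^2)$ serves a double purpose: since $\T^2$ has finite (unit) measure it guarantees $\norm{f}_3$ and $\norm{f}_2$ are finite, and the zero-average condition encoded in the dotted space is exactly what will cancel the imaginary linear term produced by the expansion.

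For the first comparison I would start from the elementary pointwise bound for the complex exponential, $\abs{e^{\imm\theta}-1-\imm\theta+\frac{\theta^2}{2}}\le\frac{\abs{\theta}^3}{6}$ for real $\theta$, applied with $\theta=f(x)$, and then integrate over $\T^2$. Recalling that the measure is normalised, so that $\int_{\T^2}dx=1$ while $\int_{\T^2}f\,dx=0$ by zero average, the constant term integrates to $1$, the linear term vanishes, and the quadratic term contributes $-\frac12\norm{f}_2^2$; the remainder integrates to at most $\frac{\norm{f}_3^3}{6}$. This yields $\abs{\int_{\T^2}e^{\imm f}dx-\pa{1-\frac12\norm{f}_2^2}}\le\frac{\norm{f}_3^3}{6}$.

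For the second comparison I would set $t=\frac12\norm{f}_2^2\ge0$ and invoke the scalar inequality $0\le e^{-t}-(1-t)\le\frac{t^2}{2}$, valid for all $t\ge0$ (it follows by noting that $t\mapsto\frac{t^2}{2}-e^{-t}+1-t$ vanishes together with its first derivative at $t=0$ and has nonnegative second derivative). This bounds $\abs{\pa{1-\frac12\norm{f}_2^2}-e^{-\frac12\norm{f}_2^2}}$ by $\frac12 t^2=\frac{\norm{f}_2^4}{8}$, and adding the two estimates through the triangle inequality produces exactly $\frac{\norm{f}_3^3}{6}+\frac{\norm{f}_2^4}{8}$. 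I do not expect a genuine obstacle here, since the argument is a clean two-step Taylor estimate; the only point deserving care is the bookkeeping around the zero-average condition, which is precisely what removes the oscillatory linear term and lets the Gaussian-type exponential emerge from the quadratic one.
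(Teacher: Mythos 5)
Your proof is correct and follows essentially the same route as the paper: the paper also reduces the claim, via the zero-average condition, to the two Taylor remainders $\abs{e^{\imm t}-1-\imm t+\tfrac{t^2}{2}}\leq\tfrac{\abs{t}^3}{6}$ and $\abs{e^{-t}-1+t}\leq\tfrac{t^2}{2}$, the only cosmetic difference being that the paper writes the difference of the two remainders directly rather than inserting the intermediate polynomial $1-\tfrac12\norm{f}_2^2$ and invoking the triangle inequality. No gap to report.
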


\begin{proof}
	Thanks to the zero average condition, we can expand
	\begin{align*}
	&\int_{\T^2} e^{\imm f(x)} dx-e^{-\frac{1}{2}\norm{f}^2_{2}}\\
	& \quad = \int_{\T^2} \pa{e^{i f(x)}-1-\imm f(x)+\frac{f(x)^2}{2}} dx
	-\pa{e^{-\frac{1}{2}\norm{f}^2_{2}}-1+\frac{\norm{f}^2_2}{2}}
	\end{align*}
	and then apply Taylor expansions
	\begin{equation*}
	\abs{e^{it}-1-it+\frac{t^2}{2}}\leq \frac{t^3}{6}, \quad \abs{e^{-t}-1+t}\leq\frac{t^2}{2}.
	\end{equation*}
\end{proof}

\begin{prop}\label{prop:Fcomplexboundstorus}
  For any $\beta,\gamma>0$ and integer $p\geq 1$, if $m=m(N)$ grows at most polynomially in $N$, then it holds
  \begin{equation*}
  	\int_{\T^{2N}} e^{-\beta H_{V_m}} dx_1\cdots dx_n
  	\leq C_{\beta,\gamma,p} \pa{1+\frac{m^{\frac{\beta}{4\pi\gamma}}\pa{\log m}^{2p}}{N^{p/2}}}
  \end{equation*}
  uniformly in $N$.	
\end{prop}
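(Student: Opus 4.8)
The plan is to start from the Sine-Gordon representation \eqref{sinegordontorus}, which rewrites the target integral as
\begin{equation*}
  \int_{\T^{2N}} e^{-\beta H_{V_m}}\,dx_1\cdots dx_n
  = e^{\frac{\beta}{2\gamma}V_m(0,0)}\,
  \expt{\pa{\int_{\T^2} e^{-\imm\sqrt{\beta/(\gamma N)}\,\sigma F_m(x)}\,dx}^{\text{(sum over }\sigma_i\text{)}}},
\end{equation*}
more precisely as a Gaussian expectation of $\prod_{i=1}^N \int_{\T^2} e^{-\imm\sqrt{\beta/(\gamma N)}\,\sigma_i F_m(x_i)}\,dx_i$. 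The prefactor $e^{\frac{\beta}{2\gamma}V_m(0,0)}$ is, by the computation $V_m(0,0)=\frac{1}{2\pi}\log m+o(\log m)$ recorded in the proof of \autoref{lem:Fboundstorus}, comparable to $m^{\frac{\beta}{4\pi\gamma}}$; this is exactly the factor that appears in the claimed bound, so the task reduces to showing that the Gaussian expectation of the $N$-fold product is bounded by $C_{\beta,\gamma,p}(1+N^{-p/2}(\log m)^{2p})$ up to that prefactor.

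The next step is to control each single-site factor $\int_{\T^2} e^{-\imm\sqrt{\beta/(\gamma N)}\,\sigma_i F_m(x_i)}\,dx_i$ pathwise, for fixed realization of $F_m$, using \autoref{lem:exponentialintegral} applied to $f=\sqrt{\beta/(\gamma N)}\,\sigma_i F_m$ (note $\sigma_i=\pm1$ only rescales). That lemma gives
\begin{equation*}
  \abs{\int_{\T^2} e^{-\imm\sqrt{\beta/(\gamma N)}\sigma_i F_m}\,dx
    - e^{-\frac{\beta}{2\gamma N}\norm{F_m}_2^2}}
  \leq \frac{1}{6}\pa{\frac{\beta}{\gamma N}}^{3/2}\norm{F_m}_3^3
    + \frac{1}{8}\pa{\frac{\beta}{\gamma N}}^2\norm{F_m}_2^4,
\end{equation*}
so each factor is $e^{-\frac{\beta}{2\gamma N}\norm{F_m}_2^2}$ plus an error of size $O(N^{-3/2})$ in the relevant norms of $F_m$. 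The plan is then to expand the product $\prod_{i=1}^N(\text{main}_i+\text{err}_i)$. Since all the $\text{main}_i$ are the \emph{same} quantity $M:=e^{-\frac{\beta}{2\gamma N}\norm{F_m}_2^2}$ and the $\text{err}_i$ share a common pathwise bound $\mathcal{E}:=\tfrac16(\beta/\gamma N)^{3/2}\norm{F_m}_3^3+\tfrac18(\beta/\gamma N)^2\norm{F_m}_2^4$, the product is bounded by $\sum_{r=0}^N \binom{N}{r} M^{N-r}\mathcal{E}^r = (M+\mathcal{E})^N$. To obtain the $N^{-p/2}$ decay for a fixed integer $p$, I would keep the binomial expansion only up to order $r=p$ and absorb the tail; the leading term $M^N = e^{-\frac{\beta}{2\gamma}\norm{F_m}_2^2}$ (the $N$-fold power cancels the $1/N$) combines with the prefactor $e^{\frac{\beta}{2\gamma}V_m(0,0)}$ to give, after taking the Gaussian expectation, exactly $e^{\frac{\beta}{2\gamma}V_m(0,0)}\expt{e^{-\frac{\beta}{2\gamma}\norm{F_m}_2^2}}$, which by \eqref{Fexpmomentstorus} is $\simeq m^{\frac{\beta}{4\pi\gamma}}\cdot m^{-\frac{\beta}{4\pi\gamma}}=O(1)$: this produces the leading $C_{\beta,\gamma,p}$ term, independent of $m$.

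The remaining terms carry powers $\mathcal{E}^r$ with $r\geq1$, hence factors $N^{-3r/2}$ (or $N^{-2r}$) times $\binom{N}{r}=O(N^r/r!)$, giving a net $N^{-r/2}$ decay, multiplied by Gaussian moments $\expt{M^{N-r}\,(\norm{F_m}_3^{3}+\norm{F_m}_2^4)^r}$. The factor $M^{N-r}\le M^N \cdot M^{-r}$, and combined with the prefactor this again pairs into $e^{\frac{\beta}{2\gamma}V_m(0,0)}\expt{e^{-\frac{\beta}{2\gamma}\norm{F_m}_2^2}\cdot(\text{polynomial in }\norm{F_m})}$; Cauchy–Schwarz together with \autoref{lem:Fboundstorus}, which controls $\expt{\norm{F_m}_p^p}\simeq(\log m)^{p/2}$ and the exponential moment, shows each such Gaussian moment is bounded by a constant times a power of $\log m$ (at most $(\log m)^{2p}$ for $r$ up to $p$). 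Matching the worst case to the stated $(\log m)^{2p}/N^{p/2}$ then closes the estimate. \textbf{The main obstacle} I anticipate is the bookkeeping that keeps the $m$-dependence uniform: one must verify that the prefactor $e^{\frac{\beta}{2\gamma}V_m(0,0)}$ times the Gaussian exponential moment stays $O(1)$ (not just the leading term but in every term of the expansion, which requires pulling out $M^N$ cleanly), and that the polynomial growth hypothesis $m=m(N)$ is used only to guarantee $(\log m)^{2p}$ is the genuine order — the $m^{\frac{\beta}{4\pi\gamma}}$ growth being confined to the explicit prefactor already displayed in the claim. Truncating the binomial expansion at order $p$ rather than summing the full $(M+\mathcal{E})^N$ is the delicate point, since one must argue the higher-order remainder ($r>p$) is dominated by the $r=p$ term for large $N$.
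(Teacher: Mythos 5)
Your proposal follows the paper's skeleton faithfully up to a point: the Sine--Gordon representation \eqref{sinegordontorus}, the pathwise Taylor estimate of \autoref{lem:exponentialintegral} applied to each factor $E_j$, and the cancellation of the prefactor $e^{\frac{\beta}{2\gamma}V_m(0,0)}\simeq m^{\frac{\beta}{4\pi\gamma}}$ against $\expt{\Es^N}$ through \eqref{Fexpmomentstorus} are exactly the paper's steps. The genuine gap is the majorization $\abs{\prod_j E_j}\leq (M+\mathcal{E})^N$ followed by a binomial expansion. That inequality discards the one uniform piece of information available, $\abs{E_j}\leq 1$: the true product is always at most $1$, whereas your majorant is exponentially large in $N$ on the event $\set{\mathcal{E}\geq 2}$ (e.g.\ when $\norm{F_m}_3\gtrsim\sqrt{N}$, an event of positive, exponentially small probability). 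Concretely, the natural pathwise bound on the majorant is $(M+\mathcal{E})^N\leq e^{N\mathcal{E}}$, and $\expt{e^{N\mathcal{E}}}=+\infty$ for \emph{every} fixed $N$, because $N\mathcal{E}\geq cN^{-1}\norm{F_m}_2^4$ and cubic or quartic Gaussian functionals admit no finite exponential moments; term by term, Gaussian moment growth $\expt{\mathcal{E}^r}^{1/r}\lesssim N^{-3/2}\pa{r^{3/2}+(\log m)^{3/2}}$ makes $\binom{N}{r}\expt{M^{N-r}\mathcal{E}^r}$ of size $C^N$ when $r$ is comparable to $N$. Hence the tail $r>p$ is dominated by the $r=p$ term neither pathwise (whenever $\mathcal{E}\gtrsim M$ the terms grow with $r$) nor in expectation: the step you flag as ``delicate bookkeeping'' is in fact the missing core of the argument. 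A secondary inaccuracy: polynomial growth of $m(N)$ is not needed merely to identify $(\log m)^{2p}$ as the right order; it is what makes the leftover factors $m^{\frac{r\beta}{4\pi\gamma N}}$, arising when Cauchy--Schwarz is applied to $\expt{M^{2(N-r)}}^{1/2}\simeq m^{-\frac{(N-r)\beta}{4\pi\gamma N}}$ for the terms $r\leq p$, bounded uniformly.

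The paper avoids all of this by iterating the exact telescoping identity \eqref{algfirstorder} $p$ times, which is an \emph{identity}, not an upper bound. In the resulting finite expansion, every term with $r<p$ difference factors $(E_k-\Es)$ retains the explicit power $\Es^{N-r}$, so the prefactor cancellation survives up to harmless factors $m^{\frac{r\beta}{4\pi\gamma N}}\lesssim 1$; the single remainder term contains exactly $p$ difference factors, with every other factor estimated by $\abs{E_j},\Es\leq 1$, so it requires only the $p$-th polynomial moments of \autoref{lem:Fboundstorus} and produces precisely the $m^{\frac{\beta}{4\pi\gamma}}N^{-p/2}(\log m)^{2p}$ contribution in the statement --- the only term carrying the uncancelled prefactor. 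If you wish to keep your binomial expansion, you must reinstate $\prod_j\abs{E_j}\leq 1$ by hand: split on the event $\set{N\mathcal{E}\leq 1}$, where the tail is bounded pathwise by $e\,(N\mathcal{E})^{p+1}$ and hence in expectation by $C_pN^{-(p+1)/2}(\log m)^{C_p}$, and on the complement bound the product by $1$ and use Markov's inequality with a moment $\prob{N\mathcal{E}>1}\leq\expt{(N\mathcal{E})^q}\lesssim N^{-q/2}(\log m)^{C_q}$, choosing $q$ so large (depending on $p$ and on the exponent of polynomial growth of $m$) that this beats the uncancelled factor $m^{\frac{\beta}{4\pi\gamma}}$. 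With that repair your route closes, but as written the proof does not.
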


To ease notation, in the following argument we will denote
\begin{equation*}
E_j=\int_{\mathbb{T}_2}e^{\imm\xi_j\sqrt{\beta}F_m(x_j)}\,dx_j,
\qquad \Es =e^{-\frac{\beta}{2N\gamma}\|F_m\|_{L^2}^2},
\end{equation*}
(notice that both depend on $N,m$) and thus write \eqref{sinegordontorus} as
\begin{equation*}
\int_{\T^{2N}} e^{-\beta H_{V_m}} dx_1\cdots dx_n =e^{\frac{\beta}{2\gamma}V_m(0,0)}
\expt{\prod_{j=1}^N E_j}
\end{equation*}	
In sight of \autoref{lem:exponentialintegral}, we expect the $0$-th order term (in $1/N$)
to be $e^{\frac{\beta}{2\gamma}V_m(0,0)}\expt{\Es^N}$, which is $O(1)$ as shown above in \autoref{lem:Fboundstorus}.
The forthcoming proof applies the Taylor expansion of \autoref{lem:exponentialintegral} to further
and further orders.

\begin{proof}
 For $p=1$, we expand the product $\prod_{j=1}^N E_j$ by means of the algebraic identity 
 \begin{equation}\label{algfirstorder}
 	\prod_{j=1}^N E_j=\Es^N+\sum_{k=1}^{N}(E_k-\Es)\Es^{N-k}\pa{\prod_{j=1}^{k-1} E_j},
 \end{equation}
 from which we can estimate
 \begin{align*}
 &\expt{\prod_{j=1}^N E_j}
 =\expt{\Es^N}+ \sum_{k=1}^{N} \expt{\pa{\prod_{j=1}^{k-1} E_j}(E_k-\Es)\Es^{N-k}}\\
 &\qquad\leq \expt{\Es^N}+ \sum_{k=1}^{N} \expt{\abs{E_k-\Es}}\\
 &\qquad\leq \expt{\Es^N}
 +N\cdot \expt{\frac{1}{6}\pa{\frac{\beta}{\gamma N}}^{3/2}\norm{F_m}_3^3+\frac{1}{8}\pa{\frac{\beta}{\gamma N}}^2 \norm{F_m}_2^4}\\
 &\qquad \leq C_{\beta,\gamma} \pa{m^{-\frac{\beta}{4\pi\gamma}} +\frac{(\log m)^2}{\sqrt{N}}}.
 \end{align*}
 The higher order terms (in $1/N$) have been dealt with in the following way:
 exponential factors have been bounded with $|E_j|,|\Es|\leq 1$, only leaving differences $E_k-\Es$ from which smallness is obtained.
 The third step is the crucial application of \autoref{lem:exponentialintegral},
 and the last one is \autoref{lem:Fboundstorus} and H\"older inequality. The thesis now follows recalling once again that
 $V_m(0,0)\simeq \frac{1}{2\pi}\log m$.
 
 For $p=2$, by iterating \eqref{algfirstorder} we get the identity
 \begin{equation*}
 \prod_{j=1}^N E_j = \Es^N + \Es^{N-1}\sum_{k=1}^N(E_k-\Es)
 + \sum_{k=2}^N\sum_{\ell=1}^{k-1}(E_\ell-\Es)(E_k-\Es)\Es^{N-\ell-1}
 {\prod_{j=1}^{\ell-1}E_j}.
 \end{equation*}
 Taking expectations and controlling separately the summands,
 \begin{align*}
 \expt{\prod_{j=1}^N E_j}
 &\leq \expt{\Es^N}+ \sum_{k=1}^{N} \expt{|E_k-\Es|\Es^{N-k}}
 +\sum_{k=2}^N\sum_{\ell=1}^{k-1} \expt{\abs{E_\ell-\Es}\abs{E_k-\Es}}\\
 &\leq \expt{\Es^N}+N \expt{\Es^{2(N-k)}}^{1/2} \expt{|E_1-\Es|^2}^{1/2}\\
 &\quad +\frac12 N(N-1)\expt{|E_1-\Es|^2}\\
 &\lesssim m^{-\frac{\beta}{4\pi\gamma}} + m^{-\frac{\beta(N-k)}{4\pi\gamma N}} N^{-1/2}(\log m)^{3/2}+N^{-1}(\log m)^3.
 \end{align*}
 In the latter computation, the second step is Cauchy-Schwarz inequality, while the third combines 
 H\"older inequality and \autoref{lem:Fboundstorus} to control
 \begin{align*}
 	\expt{|E_k-\Es|^2}\lesssim \expt{\pa{N^{-3/2} \norm{F_m}_3^3 + N^{-2} \norm{F_m}_2^4}^2}\lesssim N^{-3}(\log m)^3.
 \end{align*}
 The thesis for $p=2$ is obtained, since we have shown that
 \begin{equation*}
 	\int_{\T^{2N}} e^{-\beta H_{V_m}} dx_1\cdots dx_n\lesssim 1+ N^{-1/2}(\log m)^{3/2}+ m^{\frac{\beta}{4\pi\gamma}} N^{-1}(\log m)^3,
 \end{equation*}
 where the middle term is always $o(1)$ because we are assuming that $m(N)$ grows at most polynomially.
 
 Further iterations of \eqref{algfirstorder} to expand products of $E_j$ produce 
 in a completely analogous manner the required estimate for arbitrary $p\geq 1$. 
 Let us only report, as an example, the third order iteration of \eqref{algfirstorder}:
 \begin{align*}
 	\prod_{j=1}^N E_j
 	&= \Es^N
 	+ \Es^{N-1}\sum_{k=1}^N(E_k-\Es)
 	+ \Es^{N-2}\sum_{k=2}^N\sum_{\ell=1}^{k-1}(E_\ell-\Es)(E_k-\Es)\\
 	&\quad + \sum_{k=3}^N\sum_{\ell=2}^{k-1}\sum_{m=1}^{\ell-1}
 	(E_k-\Es)(E_\ell-\Es)(E_m-\Es)\Es^{N-m-2}
 	\Bigl(\prod_{j=1}^{m-1}E_j\Bigr). \qedhere
 \end{align*}
\end{proof}

\subsection{Controlling Partition Functions}\label{ssec:partitionfunctions}

We want to analyse separately the contributions of regular and singular parts of the potential to the partition function
\begin{equation*}
	Z_{\beta,\gamma,N}=\int_{\T^{2N}} e^{-\beta H_{V_m}} e^{-\beta H_{W_m}}dx^N.
\end{equation*}
The core idea is that if we send $m(N)\rightarrow\infty$ along $N\rightarrow\infty$ with a suitable rate, 
the contribution of the Yukawa part of the potential, $W_m$, becomes irrelevant, and we can bound $Z_{\beta,\gamma,N}$
uniformly in $N$. With a uniform bound at hand, identifying the limit becomes quite simple: we will do so 
in the next Section, reducing ourselves to the case $\beta=0$.

Let us thus focus on $W_m$. Its free version $W_{m,\R^2}$, that is the Green function
of $m^2-\Delta$ on the whole plane, can be expressed in term of the modified Bessel function of the second kind $K_0$ as
\begin{equation}\label{yukawagreenplane}
W_{m,\R^2}(x,y)=W_{m,\R^2}(|x-y|)=\frac{1}{2\pi}K_0(m|x-y|), \quad x,y\in\R^2,
\end{equation}
where $K_0$ is the positive solution of
\begin{equation*}
r^2 K_0''(r)+r K_0'(r)-r^2 K_0(r)=0, \quad r\geq 0,
\end{equation*}
with logarithmic divergence in $r=0$ and exponential decay for large $r$,
\begin{align}
\label{klogsingular}
K_0(r)&=-\log(r)+O(1), &r\rightarrow 0,\\
\label{kexpdecay}
K_0(r)&\leq \frac{\sqrt\pi e^{-r}}{\sqrt 2 r}, &\forall r>0
\end{align}
(see \cite{abramowitz}). Unlike $G_{\R^2}(x)=-\frac{1}{2\pi}\log|x|$, $W_{m,\R^2}\in L^1(\R^2)$, 
hence by Poisson summation formula it holds, for any  distinct $x,y\in\T^2$,
\begin{equation}\label{yukawaquotient}
W_m(x,y)=\sum_{k\in\Z^2} W_{m,\R^2}(|x+k-y|)-\int_{\R^2} W_{m,\R^2}(|x|)dx,
\end{equation}
the integral on right-hand side taking care of the space average.
Notice that, since $K_0$ is positive, so is the first summand in \eqref{yukawaquotient}.
This representation allows for a quite precise control of $W_m$, which we now use
to control the rate at which the partition function relative to Yukawa potential goes to 1 as $m\rightarrow\infty$.

\begin{prop}\label{prop:zyukawatorus}
	Let $N\geq 1$, $\beta/\gamma> -8\pi$ and $m>0$. 
	There exists a constant $C_{\beta,\gamma}>0$ such that
	\begin{equation*}
	\int_{\T^{2N}} e^{-\beta H_{W_m}}dx_1\cdots dx_n \leq \pa{1+C_{\beta,\gamma}\frac{(\log m)^2}{m^2}}^N
	\end{equation*}
	(uniformly with respect to the choice of signs $\sigma_i$).
\end{prop}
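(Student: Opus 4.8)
The plan is to strip the logarithmic singularity of $W_m$ using the representation \eqref{yukawaquotient}, and then to exploit the sign structure of the intensities together with the fact that, after discarding subdominant factors, vortices of equal sign cease to interact; this collapses the $N$-fold integral onto a single elementary integral of the planar Yukawa kernel, which can be estimated directly from the behaviour \eqref{klogsingular}--\eqref{kexpdecay} of $K_0$.

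First I would write $W_m=P_m-m^{-2}$, where $P_m(x,y)=\sum_{k\in\Z^2}W_{m,\R^2}(|x+k-y|)\geq 0$ is the non-negative periodic sum appearing in \eqref{yukawaquotient} and $m^{-2}=\int_{\R^2}W_{m,\R^2}$, so that $\int_{\T^2}P_m(x,\cdot)=m^{-2}$ by unfolding. The constant part produces the prefactor $\exp\pa{\tfrac{\beta}{m^2}\sum_{i<j}\xi_i\xi_j}$; since $\sum_{i<j}\xi_i\xi_j=\tfrac12\pa{\pa{\sum_i\xi_i}^2-\gamma^{-1}}\leq \tfrac{N}{2\gamma}$, this is at most $\pa{1+C_{\beta,\gamma}m^{-2}}^N$ and can be absorbed into the target bound. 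It then remains to estimate $\int_{\T^{2N}}\prod_{i<j}e^{-\beta\xi_i\xi_j P_m(x_i,x_j)}\,dx$.

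Here the positivity $P_m\geq 0$ is decisive. Writing $c=\beta/(\gamma N)$, every factor coming from an equal-sign pair is $\leq 1$ and may simply be dropped, leaving the bipartite, purely attractive integral $\int\prod_{a,b}e^{cP_m(y_a,z_b)}\,dy\,dz$ over the $p$ positive and $q$ negative vortices ($p+q=N$; assume $p\leq q$). The structural gain is that within each sign class there is now no interaction, so integrating out the smaller class $y$ factorises and yields $\pa{\int_{\T^2}\prod_b e^{cP_m(w,z_b)}\,dw}^p$. I would then apply Jensen's inequality to the convex map $t\mapsto t^p$ and integrate each $z_b$ separately, reducing the whole integral to $\pa{1+\delta_m(cp)}^q$, with $\delta_m(\alpha):=\int_{\T^2}\pa{e^{\alpha P_m(0,w)}-1}\,dw$. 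As $cp\leq \beta/(2\gamma)$ and $q\leq N$, this is bounded by $\pa{1+\delta_m\pa{\beta/(2\gamma)}}^N$, uniformly in the choice of signs.

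The final step is the single-site estimate $\delta_m(\alpha)\lesssim_\alpha (\log m)^2/m^2$. Splitting $\T^2$ at the scale $|w|\sim m^{-1}$, on the near-diagonal region \eqref{klogsingular} gives $e^{\alpha P_m}\lesssim (m|w|)^{-\alpha/2\pi}$, whose integral is finite and of order $m^{-2}$ exactly when $\alpha/2\pi<2$; on the complementary region \eqref{kexpdecay} forces exponential smallness, so $e^{\alpha P_m}-1\lesssim \alpha P_m$ integrates to order $m^{-2}$ as well (the periodic images contribute only at this same order, again by \eqref{kexpdecay}). With $\alpha=\beta/(2\gamma)$ the integrability threshold reads $\beta/\gamma<8\pi$, precisely the two-body collapse condition and the analogue of the constraint in \autoref{prop:zgreentorus}. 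The main obstacle is to prevent the convexity step from degrading this range: applying Jensen to the \emph{smaller} sign class replaces the naive coupling $cN=\beta/\gamma$ by $c\min(p,q)\leq \beta/(2\gamma)$, and it is exactly this gain of a factor $2$ that recovers the sharp threshold $8\pi$ rather than a spurious $4\pi$; one must also confirm that discarding the equal-sign factors is harmless, which it is since they are genuinely $\leq 1$.
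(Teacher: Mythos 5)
Your reduction is correct as far as it goes: splitting off the constant $m^{-2}$ average, dropping the repulsive equal-sign factors, factorising the bipartite attractive integral over the smaller sign class, and the Jensen step are all valid, and the single-site estimate $\delta_m(\alpha)\lesssim_\alpha(\log m)^2/m^2$ for $\alpha<4\pi$ is sound. But there is a genuine gap, and it comes from a misreading of the hypothesis: the proposition assumes $\beta/\gamma>-8\pi$, which is a \emph{lower} bound, so the statement must hold for \emph{all} positive $\beta/\gamma$, in particular for $\beta/\gamma\geq 8\pi$; the "sharp threshold $8\pi$" you arrive at is not the constraint in the statement. The full positive range is not a luxury here: \autoref{cor:zuniformbound} applies this proposition at inverse temperature $2\beta$ after Cauchy--Schwarz, and the uniform-integrability argument in the proof of \autoref{thm:clttorus} needs $Z_{p\beta,\gamma,N}$ bounded for arbitrary $p\geq1$. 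Your method genuinely stops at $8\pi$: Jensen concentrates the whole coupling $cp\leq\beta/(2\gamma)$ on a single site, and as soon as $\beta/(2\gamma)\geq 4\pi$ the quantity $\delta_m(\beta/(2\gamma))$ is infinite (take balanced signs, $p=q=N/2$, which the claimed uniformity over signs must allow), so the bound becomes vacuous exactly in the regime that is still required.

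The idea you are missing is the paper's second step, imported from \autoref{prop:zgreentorus}: pair each positive vortex $y_i$ with its nearest opposite-sign vortex $z_i$ as in \eqref{minimaldipoles}, and regroup $H_{W_m}$ as in \eqref{coupling} into sums of differences $W_m(y_i,y_j)-W_m(z_i,y_j)$ plus the intra-dipole attractions $-\frac{1}{2\gamma N}\sum_i W_m(y_i,z_i)$. Each singular attraction is thus compensated by a repulsion at a comparable distance, and the differences are bounded pointwise by $C(\chi_{d(y_i,z_i)\leq r_m/2}+m^{-2})$ \emph{independently of} $\beta$. What survives are only $O(N)$ attractive terms, each carrying the coefficient $\frac{\beta}{2\gamma N}$; the factor $1/N$ in the exponent is the whole point, since after factorising over dipoles one can apply H\"older with exponent $\delta N$, $0<\delta<4\pi\gamma/\beta$, which converts the arbitrarily large coupling $\beta$ into an integrability exponent $\delta\beta/\gamma<4\pi$ at the price of a harmless power $1/(\delta N)$. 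In your scheme all $pq\sim N^2/4$ attractive pairs are retained and no such compensation is available. A smaller but real secondary gap: for $-8\pi<\beta/\gamma<0$ your opening move fails as well, since with $\beta<0$ the equal-sign factors exceed $1$ and the surviving interaction is no longer bipartite; that range is covered in the paper by the sign-blind H\"older argument (the first half of its proof), which your write-up would need to include.
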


\begin{proof}
	As a first step we produce an estimate on $W_m(x)=W_m(x,0)$ which separates the short-range, 
	relevant part and a long range remainder.
	We do so by means of the representation (\ref{yukawaquotient}), so first we have to take a closer
	look at $W_{m,\R^2}$.
	We choose a small radius $\frac{1}{m}\ll r_m=\frac{2\log m}{m} \ll 1$, below which we control $W_{m,\R^2}$ with logarithm:
	by (\ref{kexpdecay}), and since $K_0$ is decreasing, $W_{m,\R^2}(x)\leq \frac{C}{m^2}$ when 
	$|x|\geq r_m$ ($C$ will denote possibly different positive constants throughout this proof). 
	Inside the ball $B(0,r_m)$, by comparison principle,
	\begin{equation}\label{comparisonprinciple}
	\forall x\in \bar B(0,r_m) \quad W_{m,\R^2}(x)\leq -\frac{1}{2\pi}\log\pa{\frac{|x|}{r_m}}+\frac{C}{m^2}
	\end{equation}
	since the right-hand side is the solution to the problem
	\begin{equation*}
	\begin{cases}
	-\Delta u=\delta_0 &\text{in }B(0,r_m)\\
	u=\frac{C}{m^2} &\text{in }\partial B(0,r_m)\\
	\end{cases}.
	\end{equation*}
	Applying \eqref{kexpdecay} we can bound
	\begin{equation*}
		\sum_{k\in\Z^2_0} W_{m,\R^2}(|x+k|)\leq C \sum_{k\in\Z^2_0} e^{-m|k|}\leq \frac{C}{m^2},
	\end{equation*} 
	so going back to \eqref{yukawaquotient}, we control separately the summand $k=0$ with \eqref{comparisonprinciple}
	and the others as above, to get
	\begin{equation}\label{Wexpansiontorus}
	0<\sum_{k\in\Z^2} W_{m,\R^2}(|x+k|)=\leq -\frac{1}{2\pi}\log\pa{\frac{d(x,0)}{r_m}}\chi_{B(0,r_m)}(x)+\frac{C}{m^2}
	\end{equation}
	(compare with the expansion \eqref{greentorus}).
	Change of variables and \eqref{yukawagreenplane} show that also
	\begin{equation}\label{Waverage}
	0<\int_{\R^2} W_{m,\R^2}(|x|)dx \leq \frac{C}{m^2}.
	\end{equation}
	
	We now apply H\"older's inequality to obtain the thesis in the regime $|\beta/\gamma|<8\pi$.
	Keeping in mind that $W_m$ is translation invariant,
	\begin{align*}
	\int_{\T^{2N}} e^{-\beta H_{W_m}}dx_1\cdots dx_n
	&=\int_{\T^{2N}} \prod_{i=1}^{N} \prod_{j\neq i, j=1}^{N} \exp\pa{-\frac{\beta \sigma_i\sigma_j}{2\gamma N} W_m(x_i,x_j)}
	dx_1\cdots dx_n\\
	&\leq \prod_{i=1}^{N} \pa{ \prod_{j\neq i, j=1}^{N} \int_{\T^{2}}
		\exp\pa{-\frac{\beta \sigma_i\sigma_j}{2\gamma} W_m(x_j,0)} dx_j}^{1/N},
	\end{align*}
	so we can restrict ourselves to the case of two particles. Since we are already neglecting possible cancellations
	due to signs (and allowing for negative inverse temperatures $\beta$), they are irrelevant:
	let us say they are opposite to fix ideas.
	Applying the above pointwise estimates then leads to  
	\begin{align}\label{Wsingleintegral}
	\int_{\T^{2}}\exp\pa{\frac{\beta}{2\gamma} W_m(x)} dx
	&\leq \pa{1+\int_{d(x,0)\leq r_m}
		\pa{\frac{d(x,0)}{r_m}}^{\frac{\beta}{4\pi\gamma}}dx}e^{C/m^2}\\ \nonumber
	&\leq \pa{1+C r_m^2}e^{C/m^2}= 1+O\pa{\frac{(\log m)^2}{m^2}}
	\end{align}
	as soon as $\frac{\beta}{\gamma}<8\pi$ for integrability, from which the thesis follows.
	
	To cover all positive temperatures $\beta/\gamma\geq 0$, we resort instead to the technique employed in \autoref{prop:zgreentorus}.
	Assume first that positive and negative vortices are in equal number, and relabel them by minimal distance dipoles
	as in \autoref{prop:zgreentorus} (see \eqref{minimaldipoles}, whose notation we employ in the following).
	Then we can group the summands of the Hamiltonian function as
	\begin{align}\label{coupling}
	H_{W_m} 
	&= \frac1{\gamma N}\sum_{i<j}(W_m(y_i-y_j)-W_m(z_i,y_j))\\ \nonumber
	&\quad + \frac1{\gamma N}\sum_{i<j}(W_m(z_i-z_j)-W_m(y_i,z_j))
	- \frac1{2\gamma N}\sum_i W_m(y_i,z_i).
	\end{align}
	The first and second term in the formula above are similar, so we only look at the first one. 
	There are two possible cases to consider. For $i<j$,
	\begin{itemize}
		\item if $d(z_i,y_j)>\frac{r_m}{2}$, by \eqref{Wexpansiontorus} and \eqref{Waverage} it holds
		\begin{align*}
		W(z_i,y_j) - W_m(y_i,y_j) \leq -\frac1{2\pi}\log\pa{\frac{d(z_i,y_j)}{r_m}}+\frac{C}{m^2}\lesssim \frac1{m^2};
		\end{align*}
		\item if $d(z_i,y_j)\leq \frac{r_m}{2}$, then it must be $d(y_i,z_i)\leq \frac{r_m}{2}$, and thus
		\begin{equation*}
		d(y_i,y_j)\leq d(y_i,z_i)+d(z_i,y_j)\leq 2 d(z_i,y_j)\leq r_m,
		\end{equation*}
		so that we can bound, again by \eqref{Wexpansiontorus} and \eqref{Waverage},
		\begin{align*}
		W(z_i,y_j) - W_m(y_i,y_j)
		&\leq -\frac1{2\pi}\log\pa{\frac{d(z_i,y_j)}{r_m}}
		+ \frac1{2\pi}\log\pa{\frac{d(y_i,y_j)}{r_m}} +\frac{C}{m^2}\\
		&\leq \frac1{2\pi}\log\pa{\frac{d(y_i,y_j)}{d(z_i,y_j)}}+\frac{C}{m^2} \leq C.
		\end{align*}
	\end{itemize}
    We conclude that, in either case,
    \begin{equation*}
    	W(z_i,y_j) - W_m(y_i,y_j)\leq C\pa{\chi_{d(y_i,z_i)\leq r_m/2}+\frac1{m^2}}
    \end{equation*}
	Applying these estimates to the first and second sums in \eqref{coupling}, we can control the Gibbsian exponential density by
	\begin{equation*}
		e^{-\beta H_{W_m}}
		\leq 
		\prod_{i=1}^N e^{\frac{\beta}{2\gamma N} W_m(y_i,z_i)} 
		e^{C_{\beta,\gamma}\pa{\chi_{d(y_i,z_i)\leq r_m/2}+\frac1{m^2}}},
	\end{equation*}
	so that, integrating over all variables,
	\begin{equation*}
	\int_{\T^{2N}}e^{-\beta H_{W_m}}dx^N
	\leq\pa{\int_{\T^4} e^{\frac{\beta}{2\gamma N} W_m(y,z)}
		e^{C\pa{\chi_{d(y,z)\leq r_m/2}+\frac1{m^2}}}dydz}^N.
	\end{equation*}
	We are now able to control the two exponentials separately by Cauchy-Schwarz inequality and \eqref{Wexpansiontorus}, \eqref{Waverage}.
	If $0<\delta<\frac{4\pi \gamma}{\beta}$, the same computation of \eqref{Wsingleintegral} leads to
	\begin{equation*}
	\int_{\T^{4}} e^{\frac\beta{\gamma N}W_m(y,z)}\,dy\,dz
	\leq \pa{\int_{\T^{4}}e^{\frac{\delta \beta}{\gamma}W_m(y,z)} \,dy\,dz}^{\frac1{\delta N}}
	\leq \pa{1+C r_m^2}^{\frac1{\delta N}}e^{\frac{C}{Nm^2}},
	\end{equation*}
	while the second factor to control is
	\begin{equation*}
	\int_{\T^{2N}} e^{C\pa{\chi_{d(y,z)\leq r_m/2}+\frac1{m^2}}}dydz \leq (1+C r_m^2)  e^{\frac{C}{m^2}}.
	\end{equation*}
	The thesis now follows collecting all estimates.	
	The case in which there are more positive than negative vortices, or vice-versa, is readily settled as follows.
    Let $P_N$ and $Q_N$ be the numbers of positive and negative vortices, say $Q_N < P_N$. Then \eqref{coupling}
	becomes
	\begin{align}\label{posandneg}
	H_{W_m} &= \frac1{\gamma N}\sum_{i=1}^{Q_N}\sum_{j=i+1}^{P_N}(W_m(y_i-y_j)-W_m(z_i,y_j))\\ \nonumber
	&+ \frac1{\gamma N}\sum_{i=1}^{Q_N}\sum_{j=i+1}^{Q_N}(W_m(z_i-z_j)-W_m(y_i,z_j))
	- \frac1{2 \gamma N}\sum_{i=1}^{Q_N} W_m(y_i,z_i)\\ \nonumber
	&+ \frac1{\gamma N}\sum_{i=Q_N+1}^{P_N}\sum_{j=i+1}^{P_N}W_m(y_i-y_j).
	\end{align}
	Since it is always $W_m\gtrsim -\frac1{m^2}$, the new term appearing in \eqref{posandneg} --the fourth one in the right-hand side-- 
	contributes at most with a factor $\exp\pa{C_{\beta,\gamma}N/m^2}$ to the exponential integral, so the proof carries on as before.
\end{proof}

\begin{cor}\label{cor:zuniformbound}
	If $N\geq 1$, $\beta/\gamma\geq 0$, $Z_{\beta,\gamma,N}$ is uniformly bounded in $N$ by 
	a constant depending only on $\beta,\gamma$. 
\end{cor}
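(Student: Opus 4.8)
The plan is to combine the two partition-function estimates established above via Cauchy--Schwarz. The case $\beta=0$ is trivial, since then $H\equiv 0$ and $Z_{0,\gamma,N}=1$, so assume $\beta>0$. Using the splitting $H=H_{V_m}+H_{W_m}$, I would write
\begin{equation*}
Z_{\beta,\gamma,N}=\int_{\T^{2N}} e^{-\beta H_{V_m}} e^{-\beta H_{W_m}}\,dx^N
\leq \pa{\int_{\T^{2N}} e^{-2\beta H_{V_m}}\,dx^N}^{1/2}
\pa{\int_{\T^{2N}} e^{-2\beta H_{W_m}}\,dx^N}^{1/2},
\end{equation*}
where the mass $m=m(N)$ is still free. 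The first factor is controlled by \autoref{prop:Fcomplexboundstorus} (applied with $2\beta$ in place of $\beta$), which for any integer $p\geq 1$ gives a bound of order $1+m^{\frac{\beta}{2\pi\gamma}}(\log m)^{2p}N^{-p/2}$ provided $m$ grows at most polynomially; the second factor is controlled by \autoref{prop:zyukawatorus} (again with $2\beta$, admissible since $2\beta/\gamma>-8\pi$), giving a bound of order $\pa{1+C_{\beta,\gamma}(\log m)^2 m^{-2}}^N$.

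It then remains to choose $m(N)$ and $p$ so that both factors stay bounded uniformly in $N$. Taking $m=N^a$ with $a>0$, the Yukawa factor behaves like $\exp\pa{C a^2(\log N)^2 N^{1-2a}}$, which is bounded precisely when $a>1/2$, while the regular factor behaves like $1+(a\log N)^{2p}N^{a\beta/(2\pi\gamma)-p/2}$, which is bounded when $a\leq \tfrac{p\pi\gamma}{\beta}$. For these two constraints to be simultaneously satisfiable I would simply fix an integer $p>\tfrac{\beta}{2\pi\gamma}$, so that $\tfrac{p\pi\gamma}{\beta}>\tfrac12$, and then pick any $a$ with $\tfrac12<a<\tfrac{p\pi\gamma}{\beta}$. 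With this choice both factors are $O(1)$ (indeed the $N$-dependent corrections vanish as $N\to\infty$), and since $p$ is now fixed in terms of $\beta,\gamma$, the constant $C_{2\beta,\gamma,p}$ of \autoref{prop:Fcomplexboundstorus} depends only on $\beta,\gamma$; hence $Z_{\beta,\gamma,N}$ is bounded uniformly in $N$ as claimed.

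The only delicate point is the reconciliation of the two rate requirements, and this is exactly where the freedom in the order $p$ of the expansion in \autoref{prop:Fcomplexboundstorus} is essential. The Yukawa estimate forces $m$ to grow faster than $N^{1/2}$ in order to suppress the short-range singular interactions, whereas the sine-Gordon estimate for the regular part tolerates a polynomial growth $m\lesssim N^{p\pi\gamma/\beta}$ whose exponent can be pushed arbitrarily high by increasing $p$. Since $p$ ranges over all positive integers, the admissible window for $a$ is always nonempty, and no further estimate is needed.
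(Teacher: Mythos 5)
Your proof is correct and follows essentially the same route as the paper's: Cauchy--Schwarz to separate the $V_m$ and $W_m$ contributions, \autoref{prop:Fcomplexboundstorus} and \autoref{prop:zyukawatorus} applied with $2\beta$, and the choice $m(N)=N^a$ with $\tfrac12<a<\tfrac{p\pi\gamma}{\beta}$ for an integer $p$ chosen large enough depending only on $\beta/\gamma$. If anything, you track the factor of $2$ coming from Cauchy--Schwarz slightly more carefully than the paper (whose displayed exponent is $\tfrac{a\beta}{4\pi\gamma}$ rather than $\tfrac{a\beta}{2\pi\gamma}$), but this is immaterial since $p$ can be taken arbitrarily large.
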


\begin{proof}
	Let $a>0$, $m(N)=N^a$ and $p\geq 1$ an integer, then by \autoref{prop:Fcomplexboundstorus} and \autoref{prop:zyukawatorus}
	we have
	\begin{align*}
		Z_{\beta,\gamma,N}&=\int_{\T^{2N}} e^{-\beta H_{V_m}} e^{-\beta H_{W_m}}dx^N\\
		&\leq \pa{\int_{\T^{2N}} e^{-2\beta H_{V_m}}dx^N}^{1/2} \pa{\int_{\T^{2N}} e^{-2\beta H_{W_m}}dx^N}^{1/2}\\
		&\leq C_{\beta,\gamma,p} \pa{1+N^{\frac{a \beta}{4\pi\gamma}-\frac{p}{2}}a^{2p}}
		\pa{1+C_{\beta,\gamma}\frac{a^2}{N^{2a}}}^N.
	\end{align*}
	The partition function $Z_{\beta,\gamma,N}$ is then uniformly bounded in $N$ as soon as
	\begin{equation*}
		\frac{\beta}{4\pi\gamma}a<\frac{p}{2},\qquad 1-2a<0.
	\end{equation*}
	Since, for any given $\beta/\gamma>0$, we can choose $p\geq 1$ in \autoref{prop:Fcomplexboundstorus} 
	large enough for the interval
	$\frac12<a<\frac{2\pi\gamma}{\beta} p$ not to be empty, the thesis follows.
\end{proof}

\begin{rmk}
	The separation of long-range relevant interaction and singular short range ones in $G=V_m+W_m$ 
	may in fact be obtained in a variety of ways: a notable mention is the decomposition $G=V_\epsilon+W_\epsilon$, 
	with
	\begin{equation*}
		V_\epsilon=e^{-\epsilon\Delta}\ast G=\int_\epsilon^\infty e^{-t\Delta}dt, \quad W_\epsilon=G-V_\epsilon=\int_0^\epsilon e^{-t\Delta}dt
	\end{equation*}
	(in fact, $V_\epsilon$ is the smoothed potential considered in \cite{bpp}). The singular part $W_\epsilon$ admits
	the representation \eqref{yukawaquotient}, with Bessel's function $K_0$ replaced by the exponential integral function $E_1$.
	The latter behaves very similarly to $K_0$: it diverges logarithmically in the origin and decays exponentially for large arguments.
	Indeed, this decomposition is completely equivalent to the one we chose for our purposes.
\end{rmk}

\begin{rmk}
	Bounds on partition functions of point vortices -or the closely related 2-dimensional Coulomb gas ensembles-
	are a central part in many works on the topic. We refer for instance to the ones obtained in \cite{deutschlavaud,gunsonpanta,bodineau}.
	However, the uniform bound we obtain with our particular scaling of intensities does not seem to be 
	obtainable from their estimates. A remarkable consequence of the results in \cite{serfaty17} is
	an asymptotic expansion in $N$ of 2-dimensional log-gas partition function, for fixed $\beta$ and charge intensities:
	even though their expansion does not provide straightforwardly 
	the uniform bound we obtain in our scaling, a careful analysis of their arguments might provide an alternative proof.
\end{rmk}

\subsection{Proof of Central Limit Theorem}\label{ssec:proofclt}

We are now able to conclude the proof of \autoref{thm:clttorus}. The first step is the case $\beta=0$,
which in fact does not rely on the above arguments, and is essentially due to \cite{flandoli}.

\begin{proof}[Proof of \autoref{thm:clttorus}, $\beta=0$.]
	The statement on partition functions is trivial in this case.
	Convergence in law of $\omega^N\sim\mu_{\gamma}^N$ to $\omega\sim \mu_{\gamma}$
	on $\dot H^s(\T^2)$, any $s<-1$, is ensured by a straightforward application of the Central Limit Theorem
	for sums of independent variables on Hilbert spaces. 
	As for the convergence of the Hamiltonian: let $G_n$ converge to $G$ in $L^2(\T^{2\times 2})$, with $G_n$ vanishing on the diagonal, 
	and split
	\begin{align*}
	\int G(x,y)& \wick{d\omega^N(x)d\omega^N(y)}-\int G(x,y) \wick{d\omega(x)d\omega(y)}\\
	&=\int G(x,y) \wick{d\omega^N(x)d\omega^N(y)}-\int G_n(x,y) d\omega^N(x)d\omega^N(y)\\
	&\quad +\int G_n(x,y) d\omega^N(x)d\omega^N(y)-\int G_n(x,y) d\omega(x)d\omega(y)\\
	&\quad +\int G_n(x,y) d\omega(x)d\omega(y)-\int G(x,y) \wick{d\omega(x)d\omega(y)}.
	\end{align*}
	The $L^2(\Omega,\PP)$-norms of the differences on the right-hand side vanish in the limit.
	Indeed, thanks to \autoref{lem:vorticesisometry}, the first one is controlled uniformly in $N$ by
	\begin{equation*}
		\expt{\abs{\int G(x,y) \wick{d\omega^N(x)d\omega^N(y)}-\int G_n(x,y) d\omega^N(x)d\omega^N(y)}^2}
		\lesssim_\gamma \norm{G-G_n}_{\dot L^2(\T^{2\times 2})}^2,
	\end{equation*}
	and the very same estimate holds for the third summand by Gaussian It\=o isometry, \emph{cf.} (\ref{energyitotorus}).
	The second moment of the middle term vanishes as $N\rightarrow\infty$ since we have already proved that 
	$\omega^N$ converges in law on $H^s(\T^2)$ for $s<-1$, so that $\omega^N\otimes \omega^N$ converges in law on $H^{2s}(\T^{2\times 2})$
	(uniform integrability descends again by the above estimate and It\=o isometry).
\end{proof}

\begin{proof}[Proof of \autoref{thm:clttorus}, $\beta>0$.]
  Consider variables $\omega_\gamma^N\sim\mu_{\gamma}^N$ converging to $\omega_\gamma\sim\mu_{\gamma}$ as above.
  We have just seen that if $\beta=0$ the Hamiltonian $H(\omega_\gamma^N)$ converges to $\wick{E}(\omega_\gamma)$
  in $L^2(\Omega,\PP)$. Since $x\mapsto e^{-\beta x}$ is a continuous function on $\R$, this implies that
  $e^{-\beta H(\omega_\gamma^N)}$ converges in probability to $e^{-\beta \wick{E}(\omega_\gamma)}$ for all $\beta\in\R$.
  If $e^{-\beta H(\omega_\gamma^N)}$ is uniformly integrable in $N$, then its expected value $Z_{\beta,\gamma,N}$
  converges to $Z_{\beta,\gamma}=\expt{e^{-\beta \wick{E}(\omega_\gamma)}}$. By \autoref{cor:zuniformbound},
  \begin{equation*}
  	\expt{\pa{e^{-\beta H(\omega_\gamma^N)}}^p}=Z_{p\beta,\gamma,N}
  \end{equation*}
  is uniformly bounded in $N$ for all $p\beta/\gamma\geq 0$. As a consequence, $e^{-\beta H(\omega_\gamma^N)}$ 
  is uniformly integrable if $\beta/\gamma\geq 0$, thus proving point (1).
  
  Since $(e^{-\beta H(\omega_\gamma^N)},\omega_\gamma^N)$ converges in law to $(e^{-\beta\wick{E}(\omega_\gamma)},\omega_\gamma)$
  on the Polish space $\R\times \dot H^s(\T^2)$, any $s<-1$, we deduce the convergence on $\dot H^s(\T^2)$ of the probability distributions
  \begin{equation*}
  	d\mu_{\beta,\gamma}^N(\omega)=e^{-\beta H(\omega)}d\mu_\gamma^N(\omega)\rightarrow
  	e^{-\beta\wick{E}(\omega)}d\mu_{\gamma}(\omega)=d\mu_{\beta,\gamma}(\omega)
  \end{equation*}
  for all $\beta\geq 0$. We are only left to prove convergence of the Hamiltonian $H(\omega_{\beta,\gamma}^N)$
  for $\omega_{\beta,\gamma}^N\sim\mu_{\beta,\gamma}^N$. Since its Laplace transform is given by
  \begin{equation*}
  	\expt{e^{\alpha H(\omega_{\beta,\gamma}^N) }}
  	=\int e^{\alpha H(\omega)}\frac{e^{-\beta H(\omega)}}{Z_{\beta,\gamma,N}}d\mu_{\gamma}^N
  	=\frac{Z_{\beta-\alpha,\gamma,N}}{Z_{\beta,\gamma,N}},
  \end{equation*}
  convergence of partition functions and \autoref{lem:energyenstrophy} show that
  \begin{equation*}
  	\expt{e^{\alpha H(\omega_{\beta,\gamma}^N)}}\xrightarrow{N\rightarrow\infty} \EE_{\mu_{\beta,\gamma}}\bra{e^{\alpha \wick{E}(\omega)}}
  \end{equation*}
  with $\omega_{\beta,\gamma}\sim\mu_{\beta,\gamma}$, 
  for any $\alpha$ in a neighbourhood of $0$  ($\beta/\gamma$ as above), and we can conclude by Lévy continuity theorem
  (see \cite[Theorem 4.3]{kallenberg}).
\end{proof}

\section{The Case of the 2-dimensional Sphere}\label{sec:sphere}

Consider the 2-dimensional sphere $\S^2=\set{x\in\R^3:|x|=1}$ as an embedded surface in $\R^3$, its tangent spaces as
subsets of $\R^3$ and gradients of scalar functions as vectors of $\R^3$. On $\S^2$ we consider the uniform measure $d\sigma$
such that $\int_{\S^2}d\sigma=1$. The expressions $x\cdot y,x\times y$ respectively denote in this section the scalar and vector products 
in $\R^3$.

Euler equations on $\S^2$ are given by, for $x\in\S^2$,
\begin{equation*}
	\begin{cases}
	\partial_t \omega(x,t)=x\cdot \pa{\nabla\psi(x,t)\times\nabla\omega(x,t)},\\
	-\Delta \psi(x,t)=\omega(x,t).
	\end{cases}
\end{equation*}
Here $\Delta$ denotes the Laplace-Beltrami operator, and we have to supplement the Poisson equation for the \emph{stream function} $\psi$
with the zero average condition (just as we did on $\T^2$). The Green function of $-\Delta$,
\begin{equation*}
	-\Delta G(x,y)=\delta_y(x)-1
\end{equation*}
has the simple form
\begin{equation*}
	G(x,y)=-\frac1{2\pi} \log |x-y|+c,
\end{equation*}
with $|\cdot|$ the Euclidean distance of $\R^3$ between $x,y\in\S^2$ and $c$ a constant. Just like in the case of flat geometries,
smooth solutions preserve energy and enstrophy (\ref{firstintegrals}). The definition of point vortices dynamics is also
completely analogous to the case on $\T^2$: the vorticity distribution $\omega=\sum_1^N \xi_i \delta_{x_i}$ evolves according to the 
Hamiltonian dynamics (\emph{Helmholtz law})
\begin{equation*}
	\dot x_i=\frac1{2\pi}\sum_{i< j}^N\xi_j \frac{x_j\times x_i}{|x_i-x_j|^2},
\end{equation*}
with Hamiltonian function corresponding to the (renormalised) energy of the configuration,
\begin{equation*}
	H(x_1,\dots x_N)=\sum_{i< j}^N \xi_i\xi_j G(x_i,x_j).
\end{equation*}
We refer to \cite{polvanilorenzodritschel} for a more complete discussion of this setting.

The similarity with the periodic case is such that almost the whole \autoref{sec:clttorus} applies to $\S^2$:
the very same statement of \autoref{thm:clttorus} holds on $\S^2$, with all the involved objects defined as in that case.
The proof proceeds analogously, splitting $G=V_m+W_m$ as in (\ref{potentialsplitting}).
The content of subsections \ref{ssec:gibbsensembles} to \ref{ssec:sinegordon} and \ref{ssec:proofclt} only needs the replacement
of Fourier basis $e_k$ (which we used in Gaussian computations) with spherical harmonics.
In fact, the only argument in the proof of \autoref{thm:clttorus} which needs to be adapted to the case on $\S^2$ is the control on Yukawa partition function
of \autoref{ssec:partitionfunctions}. A careful analysis of the proof of \autoref{prop:zyukawatorus} reveals that
it is sufficient to prove the following bound on $W_m=(m^2-\Delta)^{-1}$.

\begin{rmk}
	The distance between $x,y\in\S^2$ on the surface is given by the angle $\theta\in[0,\pi]$ formed by the vectors $x,y\in\R^3$;
	therefore, by rotation invariance, $G(x,y)=G(\theta)$ and $W_m(x,y)=W_m(\theta)$.
\end{rmk}

\begin{prop}\label{prop:expansionsphere}
	Let $r_m=c\frac{\log m}{m}$ with $c\geq 0$ large enough. It holds, as $m\rightarrow\infty$, uniformly in $\theta\in[0,\pi]$,
	\begin{equation*}
		W_m(\theta)= \pa{-\frac1{2\pi}\log\frac\theta{r_m}+ O(1)}\chi_{\theta\leq r_m}+ O(m^{-2}).
	\end{equation*}
\end{prop}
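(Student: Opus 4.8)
The plan is to mimic the structure of the proof of \autoref{prop:zyukawatorus}, obtaining a pointwise bound on the spherical Yukawa potential $W_m(\theta)$ by comparing it to the free-plane Yukawa potential $W_{m,\R^2}$, which we already control explicitly through the Bessel function $K_0$ via \eqref{yukawagreenplane}, \eqref{klogsingular}, and \eqref{kexpdecay}. First I would fix the threshold $r_m=c\frac{\log m}{m}$ with $c$ large and split the analysis into the near region $\theta\leq r_m$ and the far region $\theta>r_m$. In the far region, the goal is to show $W_m(\theta)=O(m^{-2})$; in the near region, the goal is to extract the logarithmic singularity $-\frac1{2\pi}\log\frac\theta{r_m}$ together with an $O(1)$ correction. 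By rotation invariance (\emph{cf.} the preceding Remark) everything reduces to a one-variable problem in the geodesic distance $\theta$, so the local geometry of $\S^2$ near a point is a small perturbation of the flat plane, and $K_0(m\theta)$ should capture the leading behaviour.

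The key technical tool I would use is the comparison principle for $m^2-\Delta$ on $\S^2$, exactly as \eqref{comparisonprinciple} is used on the torus. Concretely, on the geodesic ball $B(x_0,r_m)$ I would compare $W_m$ to the solution of the Dirichlet problem for $-\Delta u=\delta_{x_0}$ (or for $(m^2-\Delta)u=\delta_{x_0}$) with constant boundary data of size $O(m^{-2})$ on $\partial B(x_0,r_m)$. Since the Laplace--Beltrami operator on a small geodesic ball differs from the flat Laplacian only by curvature corrections of relative order $\theta^2$, and since $r_m\to0$, the logarithmic profile $-\frac1{2\pi}\log\frac\theta{r_m}$ dominates while the curvature correction is absorbed into the $O(1)$ term. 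Because $K_0$ is decreasing and enjoys exponential decay \eqref{kexpdecay}, on the boundary sphere $\theta=r_m$ we get $W_{m,\R^2}(r_m)\lesssim \frac{e^{-m r_m}}{m r_m}=\frac{m^{2c-1}\text{-type decay}}{\dots}$; choosing $c$ large enough forces this to be $O(m^{-2})$, which is precisely why the statement requires $c$ large. This pins down the boundary value that feeds the comparison principle and simultaneously yields the far-field estimate $W_m(\theta)=O(m^{-2})$ for $\theta>r_m$.

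The main obstacle I anticipate is the analogue of the Poisson summation step \eqref{yukawaquotient}: on the torus one writes $W_m$ as a periodization of the free potential minus its average, which gives clean control, but on $\S^2$ there is no translation structure or lattice to sum over. Instead I would represent $W_m$ spectrally through spherical harmonics, or equivalently use the heat-kernel subordination $W_m=(m^2-\Delta)^{-1}=\int_0^\infty e^{-m^2 t}e^{t\Delta}\,dt$ and compare the spherical heat kernel to the Euclidean one on the relevant scales. The zero-average (enstrophy/gauge) condition, reflected by the $-1$ in $-\Delta G=\delta_y-1$, contributes only a global constant of size $O(m^{-2})$, which is harmless and is exactly what the $+c$ constant and the $O(m^{-2})$ remainder account for. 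The delicate point is making the comparison between spherical and flat potentials uniform in $\theta\in[0,\pi]$, including the antipodal region $\theta\approx\pi$; there the exponential decay of $K_0(m\theta)$ makes $W_m(\theta)$ exponentially small, so it is comfortably $O(m^{-2})$, and the claimed uniformity follows.

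Once these two regimes are matched, collecting the near-region logarithmic term (valid on $\theta\leq r_m$ via the comparison principle) and the uniform far-region bound $O(m^{-2})$ gives exactly
\begin{equation*}
W_m(\theta)=\pa{-\frac1{2\pi}\log\frac\theta{r_m}+O(1)}\chi_{\theta\leq r_m}+O(m^{-2}),
\end{equation*}
which is the claimed expansion. I would then remark that this is the precise analogue of \eqref{Wexpansiontorus}, so that feeding it into the argument of \autoref{prop:zyukawatorus} (the dipole relabelling, the two-particle reduction by H\"older, and the Cauchy--Schwarz splitting) produces the same uniform bound on the Yukawa partition function on $\S^2$, and hence the sphere version of \autoref{thm:clttorus}.
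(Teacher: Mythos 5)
Your plan hinges on two quantitative inputs that are never actually established, and both are precisely the content of the proposition. First, the far-field bound $W_m(\theta)\leq Cm^{-2}$ for $\theta\geq r_m$: you need it both as the conclusion in the far region and as the boundary datum on $\partial B(x_0,r_m)$ that feeds your comparison argument. On the torus this came from the Poisson summation representation \eqref{yukawaquotient}, which expresses the periodic $W_m$ directly through $K_0$; on $\S^2$ no such representation exists, so the decay \eqref{kexpdecay} of $K_0$ by itself tells you nothing about the spherical $W_m$ --- your estimate \lcap $W_{m,\R^2}(r_m)\lesssim m^{-c}$, hence the boundary datum is $O(m^{-2})$\rcap{} silently replaces the spherical kernel by the flat one, which is exactly what has to be proven. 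You do flag this as the main obstacle and propose to resolve it by the subordination formula \eqref{sphereheatrep} together with a comparison of the spherical and Euclidean heat kernels \lcap on the relevant scales\rcap; but that single sentence is not an auxiliary step, it \emph{is} the entire proof. This is how the paper proceeds: it quotes from \cite{molcanov,nowac} the uniform bound \eqref{e:stima1} for $t\geq1$, the Gaussian-type bound \eqref{e:stima2} for $t\leq1$, and the small-time expansion \eqref{e:expansion}, namely $p(t,\theta)=\frac{1}{4\pi t}e^{-\theta^2/(4t)}\frac{\theta}{\sin\theta}+O(1)$, and then splits $\int_0^\infty e^{-m^2t}p(t,\theta)\,dt$ at $t=r_m^2$: the two bounds give $O(m^{-2})$ for the tail and for the region $\theta\geq r_m$, while the expansion produces the logarithm for $\theta\leq r_m$. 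None of this is routine on a curved manifold, and none of it appears in your proposal beyond the assertion that it can be done.

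Second, even granting the boundary datum, your primary tool yields only half of the statement. The comparison with $-\Delta u=\delta_{x_0}$, as in \eqref{comparisonprinciple}, uses $-\Delta W_m=\delta_{x_0}-m^2W_m\leq\delta_{x_0}$ (via $W_m\geq0$, itself requiring an argument, e.g.\ the heat kernel or the maximum principle for $m^2-\Delta$), and therefore produces an \emph{upper} bound only. The proposition asserts a two-sided expansion, and the lower bound is genuinely needed downstream: in the dipole step of \autoref{prop:zyukawatorus}, the case $d(z_i,y_j)\leq r_m/2$ uses $W_m(y_i,y_j)\geq-\frac1{2\pi}\log\pa{d(y_i,y_j)/r_m}-C$. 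To obtain this by comparison you would have to control the Dirichlet Yukawa Green function of a geodesic ball on $\S^2$ and show it is close to $\frac1{2\pi}K_0(m\theta)$ --- once again the same spherical-versus-flat comparison you have deferred, and not a matter of \lcap curvature corrections of relative order $\theta^2$\rcap{} alone. In short: once the heat kernel estimates are in hand, the comparison-principle scaffolding is unnecessary (one integrates them directly, as the paper does); without them, your argument contains no quantitative information about $\S^2$ at all. The missing piece is the heat kernel comparison itself.
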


On $\T^2$, we relied on an explicit representation of $W_m$. Here, we seize the opportunity to present a more robust argument,
based on the well-known representation
\begin{equation}\label{sphereheatrep}
	W_m(x,y)	= \int_0^\infty e^{-m^2 t}p(t,x,y)dt.
\end{equation}
in terms of the heat kernel $p(t,x,y)$. Indeed, the following arguments work more generally on compact Riemannian surfaces without boundary.
We nevertheless prefer to keep using the terminology of $\S^2$, for the sake of simplicity.
We will make use of the following properties of the heat kernel $p(t,x,y)=p(t,\theta)$, for which we refer to \cite{molcanov,nowac}.

\begin{lem}
	It holds, for any $\theta\in[0,2\pi]$,
	\begin{align}
	p(t,\theta)&\leq C,&t\geq1,\label{e:stima1}\\
	p(t,\theta)&\leq \frac C{t\sqrt{\pi-\theta+t}}e^{-\frac{\theta^2}{4t}},&t\leq 1,\label{e:stima2}
	\end{align}
	with $C>0$ independent from $t$. Moreover, for small $t$, uniformly on $\theta$ on compact sets of $[0,\pi)$,
	\begin{equation}
		\label{e:expansion}
		p(t,\theta)= q_t(\theta)H(\theta) + O(1), \quad q_t(\theta)= \frac1{4\pi t}e^{-\frac{\theta^2}{4t}},\quad
		H(\theta)= \frac\theta{\sin\theta}.
	\end{equation} 
\end{lem}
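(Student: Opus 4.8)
The plan is to feed the heat-kernel representation \eqref{sphereheatrep} into the same scheme used on $\T^2$, reading the short-range (logarithmic) singularity off the small-time expansion \eqref{e:expansion} rather than off Poisson summation. First I would split the time integral at $t=1$: on $t\ge 1$ the bound \eqref{e:stima1} gives $\int_1^\infty e^{-m^2 t}p(t,\theta)\,dt\le C\int_1^\infty e^{-m^2 t}\,dt=Ce^{-m^2}/m^2$, which is uniform in $\theta$ and far smaller than $m^{-2}$. Everything thus reduces to the short-time integral $\int_0^1 e^{-m^2 t}p(t,\theta)\,dt$, and I would treat the ranges $\theta>r_m$ and $\theta\le r_m$ separately, the crossover being fixed by the choice $r_m=c\log m/m$.

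For $\theta>r_m$ only the remainder $O(m^{-2})$ is needed, and here I would deliberately avoid \eqref{e:expansion}, which is not uniform up to the antipode $\theta=\pi$, and use instead the global bound \eqref{e:stima2}. Since $\pi-\theta+t\ge t$ on $[0,\pi]$, it gives $p(t,\theta)\le C t^{-3/2}e^{-\theta^2/(4t)}$ for $t\le 1$, so that $W_m(\theta)\le C\int_0^\infty t^{-3/2}e^{-m^2 t-\theta^2/(4t)}\,dt+Ce^{-m^2}/m^2$. The first integral is a standard Laplace--Bessel integral equal to $2(2m/\theta)^{1/2}K_{1/2}(m\theta)=\frac{2\sqrt\pi}{\theta}e^{-m\theta}$, using $K_{1/2}(z)=\sqrt{\pi/(2z)}\,e^{-z}$. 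For $\theta\ge r_m$ one has $m\theta\ge c\log m$, hence $e^{-m\theta}\le m^{-c}$ and $\theta^{-1}\le m/(c\log m)$, so $W_m(\theta)\lesssim m^{1-c}/\log m$; choosing $c$ large enough (e.g.\ $c\ge 3$) makes this $O(m^{-2})$ uniformly on $(r_m,\pi]$. This is the exact counterpart of the $|x|\ge r_m$ estimate on $\T^2$, and it disposes of the antipodal region as a by-product.

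For $\theta\le r_m$ the scale $r_m\to 0$ lies in a fixed compact subset of $[0,\pi)$, so \eqref{e:expansion} applies and $H(\theta)=\theta/\sin\theta=1+O(\theta^2)$ is bounded. Substituting $p(t,\theta)=q_t(\theta)H(\theta)+O(1)$, the $O(1)$ part integrates to $O(1)\int_0^1 e^{-m^2 t}\,dt=O(m^{-2})$, while the Gaussian part gives $H(\theta)\int_0^1 e^{-m^2 t}q_t(\theta)\,dt$. I would then recognise the full integral $\int_0^\infty\frac{1}{4\pi t}e^{-m^2 t-\theta^2/(4t)}\,dt=\frac{1}{2\pi}K_0(m\theta)$, i.e.\ the plane Yukawa potential $W_{m,\R^2}$ of \eqref{yukawagreenplane}, the tail $\int_1^\infty$ again being $O(e^{-m^2}/m^2)$; since $(H(\theta)-1)K_0(m\theta)=O(\theta^2 K_0(m\theta))=O((\log m)^3/m^2)$ on this range, this yields $W_m(\theta)=\frac{1}{2\pi}K_0(m\theta)+o(1)$. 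Finally, for $m\theta\le 1$ the small-argument asymptotics \eqref{klogsingular}, $K_0(z)=-\log z+O(1)$, together with $mr_m=c\log m$, give $\frac{1}{2\pi}K_0(m\theta)=-\frac{1}{2\pi}\log\frac{\theta}{r_m}-\frac{1}{2\pi}\log(c\log m)+O(1)$, whereas for $1<m\theta\le c\log m$ the variable $K_0(m\theta)$ is bounded and $-\frac{1}{2\pi}\log\frac{\theta}{r_m}\ge 0$; in both sub-ranges $\frac{1}{2\pi}K_0(m\theta)\le -\frac{1}{2\pi}\log\frac{\theta}{r_m}+O(1)$, which is the asserted expansion.

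I expect the main obstacle to be the bookkeeping at the crossover $\theta\approx r_m$ and the exact role of the constant $c$: it must be taken large enough that the decay estimate $\frac{2\sqrt\pi}{\theta}e^{-m\theta}$ reaches order $m^{-2}$ precisely at $\theta=r_m$, thereby matching the two regimes. I would also emphasise, exactly as for \eqref{Wexpansiontorus} in \autoref{prop:zyukawatorus} (which is an inequality), that only an upper bound of the stated form is needed downstream: the true value of $W_m(\theta)$ falls below $-\frac{1}{2\pi}\log\frac{\theta}{r_m}$ by the positive amount $\frac{1}{2\pi}\log(c\log m)$, a discrepancy that is harmlessly absorbed into $O(1)$ when one exponentiates and integrates, so that it does not affect the integrability threshold in $\beta/\gamma$. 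The single genuinely geometric input is the expansion \eqref{e:expansion} with $H(\theta)=\theta/\sin\theta$; its breakdown near $\theta=\pi$ is precisely why the antipodal region is routed through \eqref{e:stima2}.
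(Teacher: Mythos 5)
Your proposal does not prove the lemma at all: it is a proof of \autoref{prop:expansionsphere}, the statement \emph{downstream} of the lemma. All three estimates under review enter your argument as hypotheses --- you invoke \eqref{e:stima1} for the tail $t\geq 1$, \eqref{e:stima2} for the region $\theta>r_m$ (including the antipodal zone), and \eqref{e:expansion} for $\theta\leq r_m$ --- so the argument is circular with respect to the statement and establishes none of the heat-kernel bounds themselves. As a proof of \autoref{prop:expansionsphere} your computation looks sound and is close in spirit to the paper's (the paper splits the time integral at $t=r_m^2$, while you split at $t=1$ and evaluate the Laplace--Bessel integral $\int_0^\infty t^{-3/2}e^{-m^2t-\theta^2/(4t)}\,dt=\frac{2\sqrt{\pi}}{\theta}\,e^{-m\theta}$ via $K_{1/2}$; both routes dispose of the antipodal region through \eqref{e:stima2}), but that is not the task you were set.

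For the lemma itself the paper offers no proof either: it is quoted from \cite{molcanov,nowac}, and an actual proof requires genuine heat-kernel analysis on $\S^2$ that your text never touches. Concretely: \eqref{e:stima1} follows from the spectral representation $p(t,\theta)=\sum_{n\geq 0}(2n+1)\,e^{-n(n+1)t}P_n(\cos\theta)$ together with $|P_n|\leq 1$, uniformly for $t\geq 1$; the bound \eqref{e:stima2}, whose factor $(\pi-\theta+t)^{-1/2}$ records the focusing of geodesics at the antipode, is a sharp estimate of Nowak--Sj\"ogren--Szarek that no soft comparison argument produces; and \eqref{e:expansion} is a Molchanov-type small-time parametrix asymptotic, $p(t,x,y)\sim (4\pi t)^{-1}e^{-d(x,y)^2/4t}\,\Theta(x,y)^{-1/2}$ with $\Theta$ the volume density in normal coordinates ($\Theta(\theta)=\sin\theta/\theta$ on $\S^2$), valid uniformly away from the cut locus --- which is precisely why it degenerates near $\theta=\pi$ and why the lemma routes that region through \eqref{e:stima2}. (Note, incidentally, that the parametrix gives the correction $(\theta/\sin\theta)^{1/2}$ rather than $\theta/\sin\theta$; the discrepancy is harmless for \autoref{prop:expansionsphere}, which only uses $H(0)=1$ and differentiability of $H$ at $0$, but a proof of the lemma would have to pin the exponent down.) Your closing observation that \eqref{e:expansion} breaks down at the antipode is correct, but diagnosing where a cited estimate is needed is not the same as proving it.
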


\begin{proof}[Proof of \autoref{prop:expansionsphere}]
	It is not difficult to see, using the estimates (\ref{e:stima1}) and (\ref{e:stima2}), that
	\begin{equation*}
		\int_{r_m^2}^\infty e^{-m^2 t}p(t,\theta)dt+ \chi_{\{\theta\geq r_m\}}\int_0^{r_m^2}e^{-m^2 t}p(t,\theta)dt =O(m^{-2}),
	\end{equation*}
	so we focus on the main term, $\chi_{\theta\leq r_m}\int_0^{r_m^2}e^{-m^2 t}p(t,\theta)dt$. Thanks to \eqref{e:expansion}, we have
	\begin{equation*}
		\int_0^{r_m^2}e^{-m^2 t}p(t,\theta)dt = H(\theta)\int_0^{r_m^2}e^{-m^2 t}q_t(\theta)dt+ O(1).
	\end{equation*}
	Integrating by parts, straightforward computations show that
	\begin{equation*}
			\int_0^{r_m^2}e^{-m^2 t}q_t(\theta)dt=\frac1{4\pi} \int_0^1 \exp\pa{-c^2 \log^2m-\frac{\theta^2}{r_m^2}}\frac{ds}{s}
			 = -\frac1{2\pi}\log \frac{\theta}{r_m}+O(1),
	\end{equation*}
	and since $H(0)=1$ and $H$ is differentiable in $0$, the thesis follows.
\end{proof}

\section{The Case of a Bounded Domain}\label{sec:cltdomain}

In this Section, $D\subset \R^2$ is a bounded domain with smooth boundary, $G(x,y)$ is the Green
function of $-\Delta$ on $D$ with Dirichlet boundary conditions. The naught subscript refers
to boundary conditions: $H^\alpha_0(D)$, $\alpha>0$, are the (fractional) $L^2(D)$-based
Sobolev spaces defined as the closure of compactly supported functions $C^\infty_c(D)$ with respect to the norm
\begin{equation*}
	\norm{u}_{H^\alpha_0(D)}=\norm{(1-\Delta)^{\alpha/2}u}_{L^2(D)},
\end{equation*}
whereas $H^{-\alpha}(D)=H^\alpha_0(D)'$. The Green function $G$ can be represented as the sum of its free version
$G_{\R^2}(x,y)=-\frac{1}{2\pi}\log|x-y|$ and the harmonic extension in $D$ of the values of $G_{\R^2}$ on $\partial D$,
\begin{equation}\label{greendomain}
	G(x,y)=-\frac{1}{2\pi}\log|x-y|+g(x,y), \quad 
	\begin{cases}
	\Delta g(x,y)=0 & x\in D\\
	g(x,y)=\frac{1}{2\pi}\log|x-y| &x\in \partial D	
	\end{cases}
\end{equation}
for all $y\in D$. Both $G$ and $g$ are symmetric, and maximum principle implies that
\begin{equation}\label{harmoniccontbound}
	\frac{1}{2\pi}\log(d(x)\vee d(y))\leq g(x,y)\leq \frac{1}{2\pi} \log\diam(D),
\end{equation}
with $d(x)$ the distance of $x\in D$ from the boundary $\partial D$.

\subsection{Gibbs Ensembles and Gaussian Measures}

The motion of a system of $N$ vortices with intensities $\xi_1,\dots,\xi_N\in\R$ and positions $x_1,\dots,x_N\in D$
is governed by the Hamiltonian function
\begin{equation*}
	H(x_1,\dots,x_n)=\sum_{i< j}^N \xi_i\xi_j G(x_i,x_j)+\frac{1}{2}\sum_{i=1}^{N}\xi_i^2 g(x_i,x_i).
\end{equation*}
The additional (with respect to the cases with no boundary) self-interaction terms involving $g$ are due to the presence of
an impermeable boundary: it is thanks to these terms that the system satisfies (in weak sense) Euler's equations. 
We refer again to \cite[Section 4.1]{marchioropulvirenti} for further details. 
We will consider intensities $\xi_i=\frac{\sigma_i}{\sqrt{\gamma N}}$ with signs $\sigma_i=\pm1$
as in the previous section. We denote by $dx$ the normalized Lebesgue measure on $D$, and for $\gamma>0$, $\beta\geq 0$ we define
\begin{equation}\label{boundsg}
	\nu_{\beta,\gamma,N}(dx_1,\dots,dx_n)= \frac{1}{Z_{\beta,\gamma,N}} \exp\pa{-\beta H(x_1,\dots,x_n)}dx_1,\dots,dx_n.
\end{equation}

\begin{prop}\label{prop:zgreendomain}
	For any choice of $\gamma>0$, $\beta\in\R$, and signs $\sigma_i=\pm 1$, if
  \[
    -8\pi\frac{N}{\max(n_+,n_-)}
      < \frac\beta\gamma
      < 4\pi\frac{N}{1+\min(n_+,n_-)},  
  \]
	then $Z_{\beta,\gamma,N}<\infty$, and the measure $\nu_{\beta,\gamma,N}$ 
  is thus well-defined, where $n_+,n_-$ are, respectively, the number of
  vortices with positive and negative intensity.
\end{prop}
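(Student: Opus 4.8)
The plan is to run the splitting $G=-\frac1{2\pi}\log|x-y|+g$ of \eqref{greendomain} together with the H\"older and minimal-dipole scheme of \autoref{prop:zgreentorus}, the genuinely new ingredient being the boundary self-interaction $\frac12\sum_i\xi_i^2 g(x_i,x_i)=\frac1{2\gamma N}\sum_i g(x_i,x_i)$. By \eqref{harmoniccontbound} this behaves like $\frac1{4\pi\gamma N}\log d(x_i)$, so it drags vortices towards $\partial D$ when $\beta>0$ and away from it when $\beta<0$; this is exactly why the admissible interval is two-sided and why its two endpoints come from two different mechanisms. Throughout I would use two elementary facts about the Dirichlet Green function: the upper bound $g(x,y)\le\frac1{2\pi}\log\diam(D)$ from \eqref{harmoniccontbound}, and the positivity $G\ge0$ (maximum principle), the latter being what makes each \emph{same-sign} factor $e^{-\beta\xi_i\xi_j G(x_i,x_j)}\le1$ for $\beta>0$ and each \emph{opposite-sign} factor $\le1$ for $\beta<0$.

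For the upper endpoint I would take $\beta\ge0$ and first discard, via $G\ge0$, all same-sign pairwise factors; the upper bound on $g$ and the lower bound $g(x_i,x_i)\ge\frac1{2\pi}\log d(x_i)$ then bound each opposite-sign factor by $C|x_i-x_j|^{-\beta/(2\pi\gamma N)}$ and each self-interaction factor by $C\,d(x_k)^{-\beta/(4\pi\gamma N)}$. This reduces finiteness of $Z_{\beta,\gamma,N}$ to that of
\[
\int_{D^N}\ \prod_{i<j,\ \sigma_i\neq\sigma_j}|x_i-x_j|^{-\frac{\beta}{2\pi\gamma N}}\ \prod_{k=1}^N d(x_k)^{-\frac{\beta}{4\pi\gamma N}}\,dx_1\cdots dx_N,
\]
which I would control by the relabelling-into-closest-dipoles argument of \autoref{prop:zgreentorus} supplemented by a H\"older step, and by power-counting the local dimension at every coincidence/boundary collapse. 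The binding configuration is a \emph{balanced} cluster of $\mu:=\min(n_+,n_-)$ opposite pairs collapsing onto the boundary: balancing its $\mu^2$ coincidence exponents and $2\mu$ self-interaction exponents against the volume of such clusters produces the threshold $\beta/\gamma<4\pi N/(1+\mu)$, the H\"older step being responsible for the precise constant.

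For the lower endpoint I would take $\beta<0$, where the roles reverse: the self-interaction factors are now bounded, $e^{\frac{|\beta|}{2\gamma N}g(x_i,x_i)}\le C$ since $g\le\frac1{2\pi}\log\diam(D)$, and the opposite-sign factors are $\le1$ by $G\ge0$, so the only singular terms come from same-sign coincidences, each bounded by $C|x_i-x_j|^{-|\beta|/(2\pi\gamma N)}$. The integral then factorises over the two equal-sign species,
\[
Z_{\beta,\gamma,N}\le C^N\Bigl(\int_{D^{n_+}}\prod_{i<j}|y_i-y_j|^{-\frac{|\beta|}{2\pi\gamma N}}\,dy\Bigr)\Bigl(\int_{D^{n_-}}\prod_{i<j}|z_i-z_j|^{-\frac{|\beta|}{2\pi\gamma N}}\,dz\Bigr),
\]
and each factor is a two-dimensional log-gas integral, finite precisely when its worst (total) collapse is integrable, i.e.\ $\binom{k}{2}\frac{|\beta|}{2\pi\gamma N}<2(k-1)$ for $k$ the number of vortices of that sign. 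The larger group $k=\max(n_+,n_-)$ is binding and yields $|\beta|/\gamma<8\pi N/\max(n_+,n_-)$, that is $\beta/\gamma>-8\pi N/\max(n_+,n_-)$.

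The hard part is precisely this new self-interaction and its coupling to the coincidence singularities near $\partial D$: on $\T^2$ and $\S^2$ the function $g$ is bounded and the corresponding H\"older factor was trivially finite, whereas here $g(x,x)\to-\infty$ at the boundary and the term must be retained and estimated jointly with the coincidences. One cannot treat the boundary blow-up and the dipole coincidences independently — a crude H\"older split, or the use of the weak lower bound $g\ge\frac1{2\pi}\log(d(x)\vee d(y))$ on same-sign pairs, destroys the cancellation against the logarithmic repulsion and returns the far worse threshold $\beta/\gamma<2\pi N/\max(n_+,n_-)$. The decisive point is to exploit the positivity $G\ge0$ to delete the harmless same-sign (respectively opposite-sign) pairs for $\beta>0$ (respectively $\beta<0$), which prevents the two sign species from conspiring and is what isolates the correct combinatorial constants $1+\min(n_+,n_-)$ and $\max(n_+,n_-)$.
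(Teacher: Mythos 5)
Your $\beta<0$ argument is essentially the paper's proof of that case: use $G\ge 0$ to drop the opposite-sign pairs, observe that $-\beta H_s$ is bounded because $g\le\frac1{2\pi}\log\diam(D)$, factorise over the two species, and control each same-sign log-gas integral. The paper makes your ``worst collapse'' criterion rigorous by a H\"older inequality,
\begin{equation*}
\int_{D^{n_+}}\prod_{i\neq j}|x_i-x_j|^{\frac{\beta}{4\pi\gamma N}}dx
\leq \prod_{i}\Bigl(\int_D dx_i\prod_{j\neq i}\int_D|x_i-x_j|^{\frac{\beta}{4\pi\gamma N}n_+}dx_j\Bigr)^{\frac1{n_+}},
\end{equation*}
finite exactly when $\frac{|\beta|}{4\pi\gamma N}n_\pm<2$, which is your threshold $8\pi N/\max(n_+,n_-)$; you should state this (or an equivalent) step rather than appeal to the principle ``finite iff every collapse is integrable'', which is itself a theorem needing proof.

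The $\beta>0$ case contains a genuine gap. First, your plan is internally inconsistent: once you discard all same-sign factors via $G\ge0$, the minimal-dipole relabelling of \autoref{prop:zgreentorus} has nothing to work with, since that argument consists precisely in using the same-sign distances in the numerator to cancel, via the triangle inequality, all but the nearest-neighbour opposite-sign distances in the denominator; without the numerator, the reduction from $\prod_{i,j}d(y_i,z_j)^{-a}$ to $\prod_i d(y_i,z_i)^{-a}$ is false (on a balanced cluster the former is $\sim\epsilon^{-\mu^2a}$, the latter $\sim\epsilon^{-\mu a}$). Second, your power counting does not produce the stated constant: for the balanced boundary cluster you single out, the volume at scale $\epsilon$ is $\sim\epsilon^{4\mu-1}$ and the integrand is $\sim\epsilon^{-a(\mu^2+\mu)}$ with $a=\frac{\beta}{2\pi\gamma N}$, giving the condition $\beta/\gamma<2\pi N(4\mu-1)/(\mu(\mu+1))$, which is strictly \emph{larger} than $4\pi N/(1+\mu)$; so this configuration is not what produces the proposition's window, and deferring ``the precise constant'' to an unspecified H\"older step leaves out the actual proof. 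That constant comes from an optimisation you never perform, and which is the heart of the paper's argument: split $e^{-\beta H}=e^{-\beta H_i}e^{-\beta H_s}$ by H\"older with conjugate exponents $(p,q)$; the self-interaction factor is finite iff $\frac{\beta q}{4\pi\gamma}<N$ (the boundary integral $\int_D d(x)^{-\beta q/(4\pi\gamma N)}dx$); the $n_+n_-$ opposite-sign singular factors are handled by integrating the $n_+$ variables identically and applying H\"older over the smaller species, finite iff $\frac{p\beta}{2\pi\gamma N}\min(n_+,n_-)<2$; equalising the two constraints forces $q=1+\min(n_+,n_-)$, $p=q/\min(n_+,n_-)$, which is exactly the upper endpoint $4\pi N/(1+\min(n_+,n_-))$. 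Note also that your closing claim that a H\"older split ``returns the far worse threshold $2\pi N/\max(n_+,n_-)$'' is mistaken: an unoptimised (e.g.\ Cauchy--Schwarz) split is indeed lossy, but the optimised split is precisely what yields the stated result.
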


\begin{proof}
  Let us denote by $H_i$ the interaction part and by $H_s$ the self-interaction part of the
  Hamiltonian $H$,
  \begin{equation*}
    H_i
      =\sum_{i< j}^N\xi_i\xi_j G(x_i,x_j), \quad H_s=\frac12\sum_{i=1}^N\xi_i^2g(x_i,x_i).
  \end{equation*}
    
  If $\beta<0$, $-\beta H_s$ is bounded from above by \eqref{harmoniccontbound}.
  Since $G\geq0$, we can neglect in $H_i$ the contribution of vortices with different
  sign and
  \[
    \beta H_i
      \leq -\frac{\beta}{2\gamma N}\sum_{\sigma_i,\sigma_j>0}G(x_i,x_j)
        -\frac{\beta}{2\gamma N}\sum_{\sigma_i,\sigma_j<0}G(x_i,x_j)
      \vcentcolon= -\beta H_i^+ - \beta H_i^-
  \]
  The terms $H_i^+$, $H_i^-$ are functions on disjoint sets of variables, so the
  integral of their exponential factorizes in the product of two integrals. We analyse
  the first integral, the estimate of the second will follow likewise. Let
  $I_+=\{i:\sigma_i>0\}$. Again by
  \eqref{harmoniccontbound}, the self-interaction terms is bounded, therefore
  \[
    \int {D^{i_+}} e^{-\beta H_i^+}
      \lesssim \int_{D^{i_+}}\prod_{i\in I_+}\prod_{j\in I_+,j\neq i}|x_i-x_j|^{\frac\beta{4\pi\gamma N}}
      \leq \prod_{i\in I_+}\Bigl(\int_D \,dx_i\prod_{j\in I_+,j\neq i}
        \int_D |x_i-x_j|^{\frac\beta{4\pi\gamma N}n_+}\,dx_j\Bigr)^{\frac1{n_+}}
  \]
  The integrals above are finite if $\frac\beta{4\pi\gamma N}n_+>-2$. Likewise,
  for $H_i^-$ we obtain $\frac\beta{4\pi\gamma N}n_->-2$.
  
  We turn to the case $\beta>0$. By the H\"older inequality with conjugate exponents
  $p$ and $q$, we can bound separately the contributions of $H_i$ and $H_s$ 

	Thanks to \eqref{harmoniccontbound}, it holds
	\begin{equation*}
		\int_{D^N}e^{-\beta qH_s(x_1,\dots,x_N)}dx_1\dots dx_N \leq \pa{\int_D d(x)^{-\frac{\beta q}{4\pi\gamma N}}dx}^N <\infty
	\end{equation*}
	as soon as $\frac{\beta}{4\pi\gamma}<\frac Nq$. As for the interaction term,
  since $G$ is positive and $g$ is uniformly bounded from above,
	\begin{equation*}
		-p\beta H_i
       \leq -\frac{\beta p}{2\pi\gamma N}\sum_{\sigma_i\cdot\sigma_j<0}^N\log|x_i-x_j| + C N.
	\end{equation*}
  Assume without loss of generality that $n_-\leq n_+$, then by the H\"older inequality,
	\begin{align*}
		\int_{D^N}e^{-\beta H_i}
		  &\lesssim \int_{D^{n_-}}\prod_{i\in I_+}\Bigl(
        \int_D \prod_{j\in I_-}|x_i-x_j|^{-\frac{p\beta}{2\pi\gamma N}}
        \,dx_i\Bigr)\\
		  &= \int_{D^{n_-}}\Bigl(\int_D \prod_{j\in I_-}|y-x_j|^{-\frac{p\beta}{2\pi\gamma N}}
        \,dy\Bigr)^{n_+}\\
		  &\leq \int_{D^{n_-}}\prod_{j\in I_-}\Bigl(\int_D
        |y-x_j|^{-\frac{p\beta}{2\pi\gamma N}n_-}\,dy\Bigr)^{\frac{n_+}{n_-}}.
	\end{align*}
	The right-hand side is finite if $\frac{p\beta}{2\pi\gamma}n_-<2$.
  Combining the two conditions on $p,q$ we get the announced restriction on
	$\beta/\gamma$.
\end{proof}

The reader will notice that, unlike in \autoref{prop:zgreentorus}, when $N\rightarrow\infty$
we still have a restriction on the values of $\beta/\gamma$. See \autoref{rmk:restriction}
for more details.

We define the probability $\mu_{\beta,\gamma}^N$ on finite signed measures $\M(D)$ as the law of
\begin{equation*}
\omega_{\beta,\gamma}^N=\sum_{i=1}^N \xi_i \delta_{x_i},
\end{equation*}
with $x_1,\dots x_n$ sampled under $\nu_{\beta,\gamma,N}$.
In the case of a bounded domain we will assume the \emph{neutrality condition}
\begin{equation}\label{neutral}
  \sum_{i=1}^N \sigma_i
    = 0,
\end{equation}
so that $\omega_{\beta,\gamma}^N$ has zero average.

The limiting Gaussian random field should also have zero space average. Since the constant function $1$
does not belong to the spaces in which we set the problem (it does not satisfy the Dirichlet b.c.),
the definition is somewhat more involved than it was on $\T^2$. Define the bounded linear operator
\begin{equation*}
	M:L^2(D)\rightarrow L^2(D), \quad Mf(x)=f(x)-\int_{D} f(y)dy.
\end{equation*}
For $\gamma>0$ and $\beta\geq 0$, let $\omega_{\beta,\gamma}$ be the centred Gaussian random field on $D$ with covariance
\begin{equation*}
\forall f,g\in L^2(D), \quad 	
\expt{\brak{\omega_{\beta,\gamma},f}\brak{\omega_{\beta,\gamma},g}}=\brak{f,Q_{\beta,\gamma}g}, 
\quad Q_{\beta,\gamma}=M^*(\gamma-\beta\Delta)^{-1}M.
\end{equation*}
Equivalently, $\omega_{\beta,\gamma}$ is a centred Gaussian stochastic process indexed by $L^2(D)$ with the 
specified covariance. Analogously to the torus case, $\omega_{\beta,\gamma}$ can be identified with a random distribution taking values in
$H^s(D)$ for all $s<-1$. 

Renormalised energy of the vorticity distribution $\mu_{\beta,\gamma}$ is defined just as in (\ref{normenergy}),
and the equivalent definition of $\mu_{\beta,\gamma}$ provided by \autoref{lem:energyenstrophy} still applies in this context.
In fact, all Gaussian computations in Fourier series of the last Section still work on domains $D\subset \R^2$ 
if one considers an orthonormal basis of $L^2(D)$ diagonalising the Laplace operator: for $n\in\N$,
\begin{equation*}
-\Delta e_n=\lambda_n e_n, \quad \lambda_n\sim n,
\end{equation*}
the latter being the well known Weyl's law. The main difference is that explicit expression in Fourier series on $D$ are complicated by the
presence of the zero-averaging operator $M$ in the covariance.
We are now able to state the main result of the Section, a perfect analogue of the Central Limit Theorem we proved above on $\T^2$.
\begin{thm}\label{thm:cltdomain}
	Let $\beta/\gamma\in [0,8\pi)$, assume the neutrality condition \eqref{neutral},
  and set $\bar g=\int_{D} g(y,y)dy$. It holds:
	\begin{enumerate}
		\item[(1)] $\lim_{N\rightarrow\infty} Z_{\beta,\gamma,N}=e^{\beta \bar g}Z_{\beta,\gamma}$;
		\item[(2)] the sequence of $\M$-valued random variables $\omega^N\sim\mu_{\beta,\gamma}^N$ 
		converges in law on $H^s(D)$, any $s<-1$, to a random distribution $\omega\sim \mu_{\beta,\gamma}$,
		as $N\rightarrow\infty$;
		\item[(3)] the sequence of real random variables $H(\omega^N)-\bar g$ converges in law to $\wick{E}(\omega)$ as 
		$N\rightarrow\infty$, with $\omega^N,\omega$ as in point (2).
	\end{enumerate}
\end{thm}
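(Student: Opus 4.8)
The plan is to follow the proof of \autoref{thm:clttorus} almost line by line, isolating the two genuinely new features of the bounded domain: the self-interaction term in the Hamiltonian, and the fact that the restriction $\beta/\gamma<8\pi$ no longer disappears as $N\to\infty$. Write $H=H_i+H_s$ with $H_i=\sum_{i<j}\xi_i\xi_j G(x_i,x_j)$ and $H_s=\frac12\sum_i\xi_i^2 g(x_i,x_i)=\frac1{2\gamma N}\sum_i g(x_i,x_i)$. As on $\T^2$, the interaction part is the renormalised double integral of $G$ against $\omega^N$, i.e. $H_i=\wick{E}(\omega^N)$, once the isometry of \autoref{lem:vorticesisometry} is checked in the present setting; here the zero-average hypothesis used there is supplied instead by the neutrality condition \eqref{neutral} together with the projector $M$, which is exactly what makes the $\beta=0$ limiting covariance equal to $Q_{0,\gamma}=\frac1\gamma M^*M$. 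The self-interaction $H_s$ is, under the reference product measure $\mu_\gamma^N$, an average of bounded-above i.i.d.\ variables, so by the law of large numbers it concentrates on a deterministic constant; tracking this constant through the partition-function limit and the Laplace-transform identity is what produces the factor $e^{\beta\bar g}$ in (1) and the deterministic shift by $\bar g$ in (3).

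First I would prove the analogue of \autoref{cor:zuniformbound}: a bound on $Z_{\beta,\gamma,N}$ uniform in $N$ for $\beta/\gamma\in[0,8\pi)$. As on the torus I split $G=V_m+W_m$ using the Dirichlet Laplacian, bound the smooth part $V_m$ by the sine-Gordon transformation of \autoref{ssec:sinegordon}, and estimate the Yukawa part $W_m$ separately. For the regular part the Gaussian field $F_m$ is now built on an eigenbasis of the Dirichlet Laplacian; the Taylor-expansion estimate of \autoref{lem:exponentialintegral} and the moment bounds of \autoref{lem:Fboundstorus} carry over, provided the neutrality condition is used to cancel the first-order term $\sqrt{\beta}\,(\int_D F_m)\sum_i\xi_i=0$ (this is the role played by the zero-average condition on $\T^2$). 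For the singular part I would prove the domain analogue of \autoref{prop:zyukawatorus} through the heat-kernel representation \eqref{sphereheatrep}, exactly as in \autoref{prop:expansionsphere}, which is robust to the presence of a boundary and again yields $W_m(x,y)\simeq-\frac1{2\pi}\log|x-y|$ near the diagonal with exponentially small boundary corrections. The self-interaction factor is uniformly integrable: by \eqref{harmoniccontbound}, $\int_D e^{-\frac\beta{\gamma N}g(x,x)}\,dx\leq\int_D d(x)^{-\frac\beta{2\pi\gamma N}}\,dx<\infty$ for every $N$, while its exact limiting contribution $e^{\beta\bar g}$ is identified in part (1) via the law of large numbers rather than this crude bound. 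The surviving restriction $\beta/\gamma<8\pi$ is the uniform-in-$N$ version of the constraint in \autoref{prop:zgreendomain}: under neutrality $n_+=n_-=N/2$, so the logarithmic attraction of opposite-sign pairs must remain integrable, and the boundary (through $g$) prevents the torus minimal-dipole cancellation from removing it.

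With the uniform bound in hand, the case $\beta=0$ is settled as on $\T^2$: convergence in law of $\omega^N=\sum_i\xi_i\delta_{x_i}$ to $\omega_{0,\gamma}$ on $H^s(D)$, $s<-1$, follows from the Hilbert-space central limit theorem for the i.i.d.\ sum, whose limiting covariance is $\frac1\gamma M^*M=Q_{0,\gamma}$ precisely because of neutrality, while convergence of $H_i(\omega^N)=\wick{E}(\omega^N)$ to $\wick{E}(\omega)$ follows from the isometry and the approximation $G_n\to G$ in $L^2(D\times D)$; adding the law-of-large-numbers limit of $H_s$ gives convergence of the full Hamiltonian up to the additive constant $\bar g$ of (3). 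To pass to $\beta>0$ I would repeat the reweighting argument of \autoref{ssec:proofclt}: the uniform bounds give $\mathbb{E}\bigl[(e^{-\beta H(\omega^N)})^p\bigr]=Z_{p\beta,\gamma,N}$ bounded whenever $p\beta/\gamma<8\pi$, and since $\beta/\gamma<8\pi$ one may choose $p>1$ with $p\beta/\gamma<8\pi$, yielding uniform integrability of $e^{-\beta H(\omega^N)}$ and hence statements (1) and (2) by the joint-convergence and continuous-mapping argument. Statement (3) then follows from the identity $\mathbb{E}[e^{\alpha H(\omega_{\beta,\gamma}^N)}]=Z_{\beta-\alpha,\gamma,N}/Z_{\beta,\gamma,N}$, valid for $\alpha$ near $0$ with $(\beta-\alpha)/\gamma\in[0,8\pi)$, combined with \autoref{lem:energyenstrophy} and Lévy's continuity theorem, the constant $\bar g$ entering through the self-interaction as above.

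The step I expect to be the main obstacle is the uniform control near $\partial D$. The harmonic part $g(x,x)$ diverges to $-\infty$ like $\frac1{2\pi}\log d(x)$ at the boundary, so both the self-interaction weight and the opposite-sign pair interactions can concentrate mass there; it is the competition between these boundary singularities and the available integrability, under the neutrality constraint, that pins the sharp threshold $\beta/\gamma<8\pi$ and makes the partition-function estimate genuinely more delicate than on $\T^2$, where no asymptotic restriction survives. Ensuring that the sine-Gordon and Yukawa bounds stay uniform in $N$ up to this threshold, rather than degrading near the boundary, is the crux of the argument.
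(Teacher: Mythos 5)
Your overall architecture coincides with the paper's: split $H$ into interaction and self-interaction parts, identify the interaction part with the renormalised energy via the isometry lemma (neutrality replacing zero average), extract $\bar g$ by the law of large numbers, prove a uniform-in-$N$ bound on $Z_{\beta,\gamma,N}$ through $G=V_m+W_m$ with the Dirichlet Yukawa potential, then run the $\beta=0$ case and the reweighting/Laplace-transform argument of \autoref{ssec:proofclt} (your observation that one must take $p>1$ with $p\beta/\gamma<8\pi$, possible because the inequality is strict, is exactly the needed adaptation). The first genuine gap is your treatment of the Yukawa part. You propose to prove the domain analogue of \autoref{prop:zyukawatorus} ``through the heat-kernel representation \eqref{sphereheatrep}, exactly as in \autoref{prop:expansionsphere}'', claiming robustness to the boundary and ``exponentially small boundary corrections''. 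This is precisely what is \emph{not} available on a general smooth bounded domain, and the paper says so in \autoref{rmk:restriction}: the short-time behaviour of the Dirichlet heat kernel near $\partial D$ depends on the geometry (geodesics and their interaction with the boundary), and the Mol\v{c}anov-type estimates that make the sphere argument work are only known to apply to (geodesically) convex domains --- which is exactly why the paper can drop the restriction on $\beta/\gamma$ only for convex $D$, in the final Corollary. What the paper actually does in \autoref{prop:zyukawadomain} is elementary: write $W_m=W_{m,\R^2}+w_m$ as in \eqref{yukawapotentialdomain} and bound the correction $w_m$ by the maximum principle, $w_m(x,y)\leq \frac1{2\pi}K_0(m\,d(x))$; this correction is \emph{not} exponentially small, it diverges logarithmically in the collar $d(x)\lesssim r_m$, and it is this collar singularity, together with the unavailability of the minimal-dipole cancellation (no translation invariance on $D$), that confines the proof to the H\"older-type argument and produces the surviving restriction on $\beta/\gamma$. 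Your position is also internally inconsistent: if the boundary corrections were exponentially small, the dipole argument of \autoref{prop:zyukawatorus} would carry over and no restriction on $\beta/\gamma$ would survive, contradicting your own closing paragraph, which attributes the threshold to boundary singularities.

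The second gap is in the sine-Gordon step. You build $F_m$ on the Dirichlet eigenbasis and invoke neutrality to cancel ``the first-order term $\sqrt\beta\,(\int_D F_m)\sum_i\xi_i$''. But \autoref{lem:exponentialintegral} is applied to each single-particle factor $E_j=\int_D e^{\imm\xi_j\sqrt\beta F_m(x)}dx$ \emph{separately} inside the telescoping expansion \eqref{algfirstorder} of $\prod_j E_j$, and it requires the realization of the field itself to have zero spatial average. Charge neutrality cannot be used at that stage: each difference $E_k-\Es$ is multiplied by a different random factor $\Es^{N-k}\prod_{j<k}E_j$, so the linear terms, each of size $N^{-1/2}$, do not cancel across $k$, and their sum is of order $N^{1/2}$ rather than $o(1)$. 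The paper's solution is to re-centre the kernel itself, setting $V_m^0=M^*m^2(-\Delta(m^2-\Delta))^{-1}M$ as in \eqref{definitionvzero}, so that $F_m$ has almost surely zero average and \autoref{lem:exponentialintegral} applies verbatim particle by particle; neutrality enters elsewhere, to guarantee that replacing $V_m$ by $V_m^0$ leaves the quadratic form of the Hamiltonian unchanged. Moreover, the self-interaction terms $v_m(x_i,x_i)$ --- which your splitting $H=H_i+H_s$ does not track through the decomposition --- are needed to combine the position-dependent diagonal $V_m(x_i,x_i)$ into the constant $V_{m,\R^2}(0,0)$, which is what makes the sine-Gordon prefactor the constant $e^{\frac{\beta}{2\gamma}V_{m,\R^2}(0,0)}$ rather than a position-dependent weight. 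Both points admit repairs, but the repairs are exactly the content of the paper's \autoref{lem:Fboundsdomain}, \autoref{prop:zyukawadomain} and the $V_m^0$ construction, none of which follow by the ``carries over'' arguments you give.
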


\begin{rmk}
	Minor modifications of our arguments allow to replace the neutrality condition on intensities 
	with the hypothesis $\sum_{i=1}^N \xi_i=o((\log N)^{-1/2})$.
	Moreover, it is possible to consider random signs $\sigma_i$ taking values $\pm1$ with probability $1/2$, or more generally
	i.i.d. bounded signs with zero expected value. Such generalisations are in fact inessential from the physical point of view,
	namely we are still dealing with fluctuations around a null profile (see \autoref{sec:meanfield}): we omit details.
\end{rmk}

We conclude this paragraph proving the case $\beta=0$ (and $\gamma=1$, for notational simplicity): 
if we can then provide a uniform bound for partition functions $Z_{\beta,\gamma,N}$,
the content of \autoref{ssec:proofclt} completely carries on to the domain case. In the remainder of this Section we show out how to adapt the strategy
we used in the torus case to control partition functions.

The expression (\ref{doubleintegralsmooth}) of double stochastic integrals with respect to white noise still holds,
and so does \autoref{lem:vorticesisometry} in the following form:
\begin{lem}
	Let $\omega^N\sim \mu_{0,\gamma}^N$.
	On continuous functions $h\in C(D^2)$ vanishing on the diagonal, 
	\emph{i.e.} $h(x,x)=0$ for all $x$, define the map
	\begin{equation*}
	h\mapsto \int_{D^2}h(x,y)d\omega^N(x)d\omega^N(y)=\sum_{i\neq j}\xi_i\xi_j h(x_i,x_j).
	\end{equation*}
	Since it holds
	\begin{equation*}
	\expt{\pa{\sum_{i\neq j}\xi_i\xi_j h(x_i,x_j)}^2}\leq C_\gamma \norm{h}^2_{L^2(D^2)}
	\end{equation*}
	with $C_\gamma$ a constant independent of $N$, the map takes values in $L^2(\mu_{0,1}^N)$, and it extends by density to a bounded linear map from $\dot L^2(D^2)$ to $L^2(\mu_{0,1}^N)$
  which we will denote by
	\begin{equation*}
	  f\mapsto \int_{D^2}f(x,y)\wick{d\omega^N(x)d\omega^N(y)}.
	\end{equation*}
\end{lem}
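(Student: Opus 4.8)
The plan is to follow the proof of \autoref{lem:vorticesisometry} verbatim up to one essential point. There, the hypothesis that $h$ has zero average in each variable made every cross term in the second-moment expansion vanish identically; here we assume only $h(x,x)=0$, so the cross terms do not vanish and their smallness must instead be extracted from the neutrality condition \eqref{neutral}. Since $\beta=0$ the Gibbs density is $e^{-\beta H}\equiv1$, so under $\mu_{0,\gamma}^N$ the positions $x_1,\dots,x_N$ are i.i.d.\ uniform (normalized Lebesgue) on $D$, and the task reduces to controlling
\begin{equation*}
\expt{\pa{\sum_{i\neq j}\xi_i\xi_j h(x_i,x_j)}^2}=\sum_{i\neq j}\sum_{\ell\neq k}\xi_i\xi_j\xi_\ell\xi_k\,\expt{h(x_i,x_j)h(x_\ell,x_k)},
\end{equation*}
classified according to the coincidence pattern of the indices $i,j,\ell,k$.

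First I would bound the marginals of $h$ that arise. With $a(x)=\int_D h(x,y)\,dy$, $b(y)=\int_D h(x,y)\,dx$ and $\bar h=\int_{D^2}h$, the Cauchy--Schwarz and Jensen inequalities with respect to the probability measure $dx$ give $\abs{\bar h}\le\norm{h}_{L^2(D^2)}$, $\norm{a}_{L^2(D)}^2,\norm{b}_{L^2(D)}^2\le\norm{h}_{L^2(D^2)}^2$, and $\int_{D^2}h(x,y)h(y,x)\,dx\,dy\le\norm{h}_{L^2(D^2)}^2$. By independence each expectation $\expt{h(x_i,x_j)h(x_\ell,x_k)}$ factorizes into one such quantity according to how the indices coincide (all four distinct yields $\bar h^2$; a single coincidence yields $\norm{a}_2^2$, $\norm{b}_2^2$ or $\int ab$; the two diagonal pairings yield $\norm{h}_2^2$ and $\int h(x,y)h(y,x)$), so every expectation is $\le\norm{h}_{L^2(D^2)}^2$.

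The coefficients are then organized through the power sums $S_k=\sum_i\xi_i^k$. The decisive observation is that neutrality forces $\sum_i\sigma_i=0$, hence $S_k=0$ for all \emph{odd} $k$, while $S_2=\gamma^{-1}$ and $S_{2m}=O(N^{1-m})$, so $S_4=O(N^{-1})$. For each coincidence pattern the coefficient is a fixed polynomial in $S_1,\dots,S_4$; imposing $S_1=S_3=0$ leaves a polynomial in $S_2$ and $S_4$ that is $O(1)$ uniformly in $N$. The only dangerous contribution is the all-distinct one, whose coefficient is
\begin{equation*}
\sum_{i,j,\ell,k\ \mathrm{distinct}}\xi_i\xi_j\xi_\ell\xi_k
=S_1^4-6S_1^2S_2+3S_2^2+8S_1S_3-6S_4 :
\end{equation*}
its leading term $S_1^4=(\sum_i\xi_i)^4$ would be of order $N^2$ in general, and it is precisely neutrality ($S_1=S_3=0$) that collapses it to the harmless $3S_2^2-6S_4=O(1)$. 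Summing the $O(1)$ coefficients against the $O(\norm{h}_{L^2(D^2)}^2)$ expectations yields $\expt{(\cdots)^2}\le C_\gamma\norm{h}_{L^2(D^2)}^2$ with $C_\gamma$ independent of $N$.

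Finally, the diagonal hypothesis $h(x,x)=0$ is used only to match the off-diagonal sum with the renormalized double integral; as the diagonal is Lebesgue-null, continuous functions vanishing on it are dense in $\dot L^2(D^2)$, so this bound lets the map extend by continuity to all of $\dot L^2(D^2)$, exactly as on $\T^2$. The main obstacle is thus not analytic but combinatorial: in the absence of the clean cancellation available on the torus one must check that neutrality suppresses every cross term uniformly in $N$, the critical case being the all-distinct term whose coefficient vanishes to leading order exactly because $\sum_i\sigma_i=0$.
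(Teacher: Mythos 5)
Your proof is correct and is exactly the adaptation the paper has in mind: the paper's own ``proof'' of this lemma is the single remark that the torus argument (\autoref{lem:vorticesisometry}) carries over with neutrality of the total intensity replacing the zero-average hypothesis on $h$, and your power-sum computation ($S_1=S_3=0$, $S_2=\gamma^{-1}$, $S_4=O(N^{-1})$, Newton-type identity for the all-distinct coefficient) is a correct, detailed implementation of precisely that remark. It is worth noting that you make precise a point the paper glosses over: unlike on the torus, the cross terms do not vanish identically under neutrality but are only $O(1)$ uniformly in $N$ (the all-distinct coefficient is $3S_2^2-6S_4\neq 0$), so the conclusion is genuinely an inequality rather than the exact identity obtained in the periodic case.
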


The proof only differs from the one on $\T^2$ in that is uses neutrality of total intensity in place of the zero average condition.
In considering the relation between the Hamiltonian and renormalised energy, another relevant difference with respect to the
torus case appears: defining the renormalised energy of point vortices as in \autoref{sec:clttorus},
\begin{align*}
	2\wick{E}&=\int_{D^2} G(x,y)\wick{d\omega^N\otimes d\omega^N}=\sum_{i\neq j}\xi_i\xi_j G(x_i,x_j)\\
	&=2H-\sum_{i=1}^N \xi_i^2 g(x_i,x_i).
\end{align*}
This is why we need corrections depending on $\bar g=\int_{D}g(y,y)dy$ in points (1) and (3) of \autoref{thm:cltdomain}: 
the Hamiltonian $H$ alone is not a centred variable, and its mean value is
\begin{equation*}
	\sum_{i=1}^N \xi_i^2 g(x_i,x_i)=\frac1{N} \sum_{i=1}^N g(x_i,x_i),
\end{equation*}
which converges by the law of large numbers to $\bar g$.
That being said, proceeding as in \autoref{ssec:proofclt} straightforwardly concludes the proof of the case $\beta=0$.

\subsection{Potential Splitting on Bounded Domains}

We want to decompose $G=V_m+W_m$ as in \autoref{sec:clttorus}, with $V_m$ a regular (\emph{long range}) potential converging to $G$
as $m\rightarrow\infty$, and $W_m$ a singular but vanishing remainder. In order for our strategy to work we need to rewrite the part of
$H$ corresponding to $V_m$ as sum of covariances (in particular, positive terms) of a regular Gaussian field with zero space average.
At the same time, we will need a quite precise description of $W_m$. We thus choose $W_m$ as the Green function of $m^2-\Delta$ on $D$
with Dirichlet boundary conditions, that is
\begin{equation}\label{yukawapotentialdomain}
W_m(x,y)=\frac{1}{2\pi}K_0(m|x-y|)+w_m(x,y), \quad 
\begin{cases}
(m^2-\Delta) w_m(x,y)=0 & x\in D\\
w_m(x,y)=-\frac{1}{2\pi}K_0(m|x-y|) &x\in \partial D	
\end{cases}
\end{equation}
for all $y\in D$, and where we notice that $\frac{1}{2\pi}K_0(m|x-y|)=W_{m,\R^2}(x,y)$ is the Green function of $m^2-\Delta$ on the whole plane.
We then set
\begin{equation*}
	V_m=G-W_m, \quad v_m=g-w_m.
\end{equation*}
Unfortunately, $V_m$ is not zero averaged, so we need to further define the potential
\begin{equation}\label{definitionvzero}
	V_m^0(x,y)=V_m(x,y)-\int_D V_m(x,y)dy-\int_D V_m(x,y)dx+\int_{D^2} V_m(x,y)dxdy,
\end{equation}
which we will use as covariance kernel for the Gaussian field $F_m$: indeed, notice that, as an integral kernel,
\begin{equation*}
	V_m^0=M^*m^2(-\Delta(m^2-\Delta))^{-1}M,
\end{equation*}
thus $V_m^0$ is positive definite and zero averaged. 

Looking now at the corresponding decomposition of the Hamiltonian,
\begin{align*}
	H&=\sum_{i<j}^N \xi_i\xi_j W_m(x_i,x_j)+\frac{1}{2}\sum_{i=1}^{N}\xi_i^2 w_m(x_i,x_i)
	+\sum_{i<j}^N \xi_i\xi_j V_m(x_i,x_j)+\frac{1}{2}\sum_{i=1}^{N}\xi_i^2 v_m(x_i,x_i)\\
	&:=H_{W_m}+H_{V_m},
\end{align*}
a simple computation exploiting the neutrality condition yields
\begin{equation*}
	\sum_{i,j}^N \xi_i\xi_j V_m(x_i,x_j)=\sum_{i,j}^N \xi_i\xi_j V_m^0(x_i,x_j)-\sum_{i=1}^{N}\xi_i^2 V_m(x_i,x_i),
\end{equation*}
so that, since $V_m+v_m=V_{m,\R^2}$ (the Green function of $-m^{-2}\Delta(m^2-\Delta)$), we can rewrite
\begin{equation*}
	H_{V_m}=\frac{1}{2}\sum_{i,j}^N \xi_i\xi_j V_m^0(x_i,x_j)-\frac{1}{2}\sum_{i=1}^{N}\xi_i^2 V_{m,\R^2}(x_i,x_i).
\end{equation*}
One can easily show that $V_{m,\R^2}$ is a regular, symmetric, translation invariant function; moreover,
it has a global maximum in $V_{m,\R^2}(0,0)=\frac{1}{2\pi}\log m+o(\log m)$, 
as it is shown by taking the difference of
\begin{equation*}
	G_{\R^2}(x,y)=-\frac{1}{2\pi}\log |x-y|, \quad W_{m,\R^2}(x,y)=\frac{1}{2\pi}K_0(m|x-y|)\sim -\frac{1}{2\pi}\log(m|x-y|),
\end{equation*}
for close $x,y\in\R^2$.
This, together with (\ref{definitionvzero}), implies that for all $x\in D$ we also have $V_m^0(x,x)=\frac{1}{2\pi}\log m+o(\log m)$ .

\begin{lem}\label{lem:Fboundsdomain}
	Let $F_m$ be the centred Gaussian field on $D$ with covariance kernel $V_m^0$. 
	There exists a version of $F_m(x)$ which is $\alpha$-H\"older for all $\alpha<1/2$, and moreover
	for any $\alpha>0$, $p\geq 1$ and $m\rightarrow \infty$, it holds
	\begin{align}
	\label{Fmomentsdomain}
	\expt{\norm{F_m}_p^p}&\simeq_p (\log m)^{p/2}\\
	\label{Fexpmomentsdomain}
	\expt{\exp\pa{-\alpha\norm{F_m}_2^2}}&\lesssim m^{-\frac{\alpha}{2\pi}}.
	\end{align}
\end{lem}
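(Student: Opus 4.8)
The plan is to follow verbatim the structure of the torus estimate \autoref{lem:Fboundstorus}, replacing the translation-invariant kernel $V_m$ by $V_m^0$ and compensating for the loss of translation invariance with the uniform-in-$x$ diagonal asymptotic $V_m^0(x,x)=\frac{1}{2\pi}\log m+o(\log m)$ recorded just before the statement. Three ingredients are needed: the H\"older regularity, the $L^p$ moments \eqref{Fmomentsdomain}, and the exponential moment \eqref{Fexpmomentsdomain}. For the H\"older continuity I would invoke Kolmogorov's continuity theorem exactly as on $\T^2$: the increment variance $\expt{\pa{F_m(x)-F_m(y)}^2}=V_m^0(x,x)+V_m^0(y,y)-2V_m^0(x,y)$ is governed near the diagonal by the short-range part of the kernel, which coincides with that of the smooth, translation-invariant free kernel $V_{m,\R^2}$ (the boundary correction $v_m$ and the averaging in $M$ contributing only bounded, smooth terms). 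Since $F_m$ is Gaussian, control of the second moment of increments upgrades to all moments and yields an $\alpha$-H\"older version for every $\alpha<1/2$.

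For \eqref{Fmomentsdomain}, by Fubini--Tonelli and the Gaussianity of $F_m(x)$,
\[
\expt{\norm{F_m}_p^p}=\int_D\expt{|F_m(x)|^p}dx=c_p\int_D V_m^0(x,x)^{p/2}dx.
\]
Unlike on the torus, $V_m^0(x,x)$ is not constant in $x$; but the \emph{uniform} estimate $V_m^0(x,x)=\frac{1}{2\pi}\log m+o(\log m)$ together with the normalisation $\int_D dx=1$ gives $\int_D V_m^0(x,x)^{p/2}dx\simeq_p(\log m)^{p/2}$, which is exactly \eqref{Fmomentsdomain}.

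For the exponential moment it suffices to establish the one-sided bound. Writing $Q=M^* m^2\pa{-\Delta(m^2-\Delta)}^{-1}M$ for the (positive, self-adjoint, trace-class) covariance operator with kernel $V_m^0$, the standard Gaussian determinant formula \cite[Proposition~2.17]{dpz} gives
\[
\expt{\exp\pa{-\alpha\norm{F_m}_2^2}}=\det(I+2\alpha Q)^{-1/2}=\exp\pa{-\tfrac12\trace\log(I+2\alpha Q)}.
\]
Using $\log(1+t)\geq t-\tfrac{t^2}{2}$ for $t\geq0$ (the same inequality that produced the upper bound on $\T^2$) I would bound $\trace\log(I+2\alpha Q)\geq 2\alpha\trace Q-2\alpha^2\trace(Q^2)$, whence
\[
\expt{\exp\pa{-\alpha\norm{F_m}_2^2}}\leq\exp\pa{-\alpha\trace Q+\alpha^2\trace(Q^2)}.
\]
Here $\trace Q=\int_D V_m^0(x,x)dx=\frac{1}{2\pi}\log m+O(1)$ by the diagonal asymptotic, while $\trace(Q^2)=\norm{V_m^0}_{L^2(D^2)}^2$ remains bounded uniformly in $m$, since $V_m^0$ converges in $L^2(D^2)$ to the zero-averaged Green kernel (the logarithmic singularity being square-integrable in two dimensions). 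This yields $\expt{\exp\pa{-\alpha\norm{F_m}_2^2}}\lesssim m^{-\alpha/2\pi}$, i.e. \eqref{Fexpmomentsdomain}.

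The main obstacle, relative to the torus, is precisely the absence of translation invariance and of an explicit Fourier diagonalisation: the leading $\frac{1}{2\pi}\log m$ behaviour of both the pointwise diagonal $V_m^0(x,x)$ and of $\trace Q$ must be shown to be unaffected by the boundary corrections $w_m,v_m$ and by the zero-average operator $M$. This is exactly what the operator identity $V_m^0=M^*m^2\pa{-\Delta(m^2-\Delta)}^{-1}M$ and the reduction to the smooth free kernel $V_{m,\R^2}$ carried out before the lemma deliver; the only genuinely new verification beyond the torus computation is the uniform Hilbert--Schmidt bound on $Q$, which follows from the $L^2(D^2)$-convergence of $V_m^0$.
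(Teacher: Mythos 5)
Your reduction for \eqref{Fexpmomentsdomain} is the same as the paper's (determinant formula plus $\log(1+t)\geq t-\tfrac{t^2}{2}$, leaving $\trace Q$ and $\trace(Q^2)$ to control), and your bound $\trace(Q^2)=O(1)$ is fine. The genuine gap is the step $\trace Q=\int_D V_m^0(x,x)\,dx=\frac1{2\pi}\log m+O(1)$, which you justify ``by the diagonal asymptotic''. Two problems. First, the asymptotic recorded before the lemma is pointwise in $x$ with error $o(\log m)$; the \emph{uniform} version you invoke (also in the $L^p$ step) is actually false: since $G$ and $W_m$ both satisfy Dirichlet boundary conditions, $V_m(x,x)$ degenerates near $\partial D$ --- in the half-plane model $V_m(x,x)=\frac1{2\pi}\log m+\frac1{2\pi}\log(2d(x))+\frac1{2\pi}K_0(2md(x))+O(1)$, so in a strip of width $\sim 1/m$ one has $V_m(x,x)=O(1)$, a deviation of full order $\log m$. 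Second, even granting the pointwise statement, an $o(\log m)$ error in $\trace Q$ only yields $\expt{\exp(-\alpha\norm{F_m}_2^2)}\lesssim m^{-\frac{\alpha}{2\pi}+\epsilon}$ for every $\epsilon>0$, not $m^{-\frac{\alpha}{2\pi}}$. This loss is not cosmetic: in \autoref{prop:Fcomplexboundstorus} and \autoref{cor:zuniformbound} the estimate \eqref{Fexpmomentsdomain} must cancel \emph{exactly} the prefactor $e^{\frac{\beta}{2\gamma}V_{m,\R^2}(0,0)}\simeq m^{\frac{\beta}{4\pi\gamma}}$, with $m=N^a$; any residual positive power of $m$ makes the bound on $Z_{\beta,\gamma,N}$ diverge and destroys the uniform integrability argument downstream.

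This is precisely the point where the paper does real work, and it does so spectrally rather than pointwise, thereby never needing any control of the diagonal near the boundary: expanding in the Dirichlet eigenbasis, $\trace V_m^0=\trace V_m-\sum_n \frac{m^2\bar e_n^2}{\lambda_n(m^2+\lambda_n)}$, the correction is $O(1)$ by Cauchy--Schwarz and $\sum_n\bar e_n^2=1$, and then $\trace V_m=\sum_n\frac{m^2}{\lambda_n(m^2+\lambda_n)}\simeq\frac1{2\pi}\log m+O(1)$ by Weyl's law $\lambda_n\sim n$. So, contrary to your closing remark, the ``genuinely new verification'' is not the Hilbert--Schmidt bound but exactly this trace computation; to repair your proof, replace the appeal to a uniform diagonal asymptotic with it (or, alternatively, prove the integrated statement directly by showing the boundary strip contributes $O((\log m)/m)$, e.g.\ via the heat-kernel representation and the domination $p_D(t,x,x)\leq(4\pi t)^{-1}$). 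The same caveat applies, more mildly, to \eqref{Fmomentsdomain}: there only the order $(\log m)^{p/2}$ matters, so a uniform upper bound $V_m^0(x,x)\leq\frac1{2\pi}\log m+O(1)$ (again obtainable from the heat kernel) together with a bulk lower bound on a fixed compact subset of $D$ suffices, but the argument as you wrote it rests on the same false uniformity.
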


\begin{proof}
	H\"older property descends from Kolmogorov continuity theorem since $V_m$ is continuously differentiable (and so is $V_m^0$).
	The estimate of $p$-moments is the same as in the periodic case, so let us turn to exponential moments.
	Identifying kernels and their associated integral operators, it holds
	\begin{equation*}
		\expt{\exp\pa{-\alpha \norm{F_m}_2^2}}
		=\exp \set{-\frac{1}{2}\trace\pa{\log\pa{1+2\alpha V_m^0}}}.
	\end{equation*}
	Hence, we only need to compute the asymptotic behaviour in $m$ of $\trace V_m^0$, since then we can apply
	the inequalities $x-\frac{x^2}{2}<\log(1+x)<x$ and conclude as in \autoref{lem:Fboundstorus}.
	We resort again to Fourier series: by definition of the kernel $V_m^0$ we have
	\begin{align*}
		\trace V_m^0 &= \sum_{n=1}^{\infty}\int_{D^2} V_m^0(x,y)e_n(x)e_n(y)dxdy\\
		&=\trace V_m -2 \sum_{n=1}^{\infty} \bar e_n \int_{D^2} V_m(x,y)e_n(x)dxdy+\int_{D^2} V_m(x,y)dxdy\sum_{n=1}^{\infty} \bar e_n^2\\
		&=\trace V_m-\sum_{n=1}^{\infty} \frac{m^2 \bar e_n^2}{\lambda_n(m^2+\lambda_n)}= \trace V_m+ O(1), \quad m\rightarrow\infty,
	\end{align*}
	where we denoted $\bar e_n$ the space averages of $e_n(x)$ (that is, the Fourier coefficients of the constant function 1).
	The last passage is a consequence of
	\begin{align*}
		0\leq \sum_{n=1}^{\infty} \frac{m^2 \bar e_n^2}{\lambda_n(m^2+\lambda_n)}
		&\leq \pa{\sum_{n=1}^{\infty} \frac{m^4}{\lambda_n^2(m^2+\lambda_n)^2}}^{1/2}
		\lesssim \pa{\int_1^\infty \frac{m^4}{x^2(m^2+x)^2}dx}^{1/2}\\
		&=\pa{\frac{m^2+2}{m^2+1}-\frac{2 \log(m^2+1)}{m^2}}^{1/2}=O(1), \quad m\rightarrow\infty,
	\end{align*}
	where we used $\pa{\sum_{n=1}^{\infty}\bar e_n^4}^{1/2}\leq \sum_{n=1}^{\infty}\bar e_n^2=\norm{1}^2_{L^2(D)}=1$ and
	Cauchy-Schwarz inequality. We conclude by noting that
	\begin{equation*}
		\trace V_m=\sum_{n=1}^{\infty} \frac{m^2}{\lambda_n(m^2+\lambda_n)}=V_m(0,0).\qedhere
	\end{equation*}
\end{proof}

We can now apply the transformation
\begin{equation*}
	e^{-\beta H_{V_m}}=e^{\frac{\beta}{2\gamma}V_{m,\R^2}(0,0)} \expt{e^{\imm \sqrt{\beta} \sum_{i=1}^N \xi_i F_m(x_i)}}
\end{equation*}
and proceed as in the previous Section. The proof of \autoref{prop:Fcomplexboundstorus} in the bounded domain setting
is just the same, thanks to \autoref{lem:Fboundsdomain}. We are only left to prove the analogue of \autoref{prop:zyukawatorus},
from which a uniform bound on partition functions is derived as in \autoref{cor:zuniformbound}.
 
\begin{prop}\label{prop:zyukawadomain}
	Let $N\geq 1$, $|\beta/\gamma|\leq 8\pi$ and $m>0$. There exists a constant $C_{\beta,\gamma}>0$ such that
	\begin{equation*}
	\int_{\T^{2N}} e^{-\beta H_{W_m}}dx_1\cdots dx_n \leq \pa{1+C_{\beta,\gamma}\frac{(\log m)^2}{m^2}}^N.
	\end{equation*}
\end{prop}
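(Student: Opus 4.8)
The plan is to mirror the proof of \autoref{prop:zyukawatorus}, replacing the periodic-images representation \eqref{yukawaquotient} by the Dirichlet decomposition $W_m=W_{m,\R^2}+w_m$ of \eqref{yukawapotentialdomain}, and to control the boundary correction $w_m$ by the maximum principle for the operator $m^2-\Delta$. Two sign facts do the bulk of the work. First, since $W_m$ is the Green function of $m^2-\Delta$ on $D$ with Dirichlet conditions, the associated resolvent $(m^2-\Delta)^{-1}=\int_0^\infty e^{-m^2t}e^{t\Delta}\,dt$ is positivity preserving, so $W_m\geq0$. Second, $w_m(\cdot,y)$ solves $(m^2-\Delta)w_m=0$ with boundary datum $-\tfrac1{2\pi}K_0(m|\cdot-y|)\le0$; as the zeroth order coefficient $m^2$ is positive, the maximum principle yields
\begin{equation*}
  -\tfrac1{2\pi}K_0\pa{m\,d(x)}\le w_m(x,y)\le 0,
\end{equation*}
and in particular $0\le W_m(x,y)\le W_{m,\R^2}(x,y)=\tfrac1{2\pi}K_0(m|x-y|)$. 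The free Yukawa potential $W_{m,\R^2}$ is exactly the object already estimated on $\T^2$: with $r_m=\tfrac{2\log m}m$, the bounds \eqref{klogsingular}--\eqref{kexpdecay} give, just as in \eqref{Wexpansiontorus},
\begin{equation*}
  W_{m,\R^2}(x,y)\le -\tfrac1{2\pi}\log\pa{\tfrac{|x-y|}{r_m}}\chi_{|x-y|\le r_m}+\tfrac C{m^2}.
\end{equation*}

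Next I would reduce the $N$-fold integral to single-site integrals. Splitting $-\beta H_{W_m}$ into pair terms $-\tfrac{\beta\sigma_i\sigma_j}{2\gamma N}W_m(x_i,x_j)$ and single-variable self-interaction terms $-\tfrac{\beta}{2\gamma N}w_m(x_i,x_i)$, H\"older's inequality separates the pair part into factors of the form $\int_D e^{\pm\frac\beta{2\gamma}W_m(x,y)}\,dx$. Here the positivity bounds make the two types of pairs trivial to treat, for each sign of $\beta$: on the half of the pairs whose exponent is $\le0$ we simply use $W_m\geq0$, so the factor is $\le1$; on the singular half we use $W_m\le W_{m,\R^2}$ and run the computation \eqref{Wsingleintegral}. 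The loss of translation invariance, which on $\T^2$ allowed fixing one point at the origin, is immaterial, since the short-range bound above gives the estimate \emph{uniformly in} $y\in D$,
\begin{equation*}
  \sup_{y\in D}\int_D e^{\frac\beta{2\gamma}W_{m,\R^2}(x,y)}\,dx\le \pa{1+Cr_m^2}e^{C/m^2}=1+O\pa{\tfrac{(\log m)^2}{m^2}},
\end{equation*}
the integral near $x=y$ converging precisely because $|\beta/\gamma|\le8\pi$ keeps the exponent of $|x-y|/r_m$ above $-2$.

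The genuinely new feature, with no analogue on $\T^2$, is the self-interaction $\tfrac1{2\gamma N}\sum_i w_m(x_i,x_i)$, which survives because the diagonal of $W_m$ is singular and only its regular part $w_m$ enters the Hamiltonian. These terms are single-variable, and by the maximum-principle bound above, $e^{-\frac\beta{2\gamma N}w_m(x,x)}\le e^{\frac\beta{4\pi\gamma N}K_0(m\,d(x))}$; separating them by a further H\"older step with an exponent close to $1$, each resulting factor integrates to $1+O\pa{\tfrac1{Nm}}$, using that $w_m(x,x)$ is exponentially small away from $\partial D$ and $\int_D K_0(m\,d(x))\,dx=O(1/m)$. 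Raised to the $N$-th power this is a harmless factor tending to $1$. Collecting the per-pair and per-site bounds into the $N$-fold product then yields the stated estimate, after which \autoref{cor:zuniformbound} transfers verbatim to the domain setting.

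I expect this control of $w_m$ to be the main obstacle. The self-interaction is log-divergent at $\partial D$ exactly as $g$ is in \eqref{harmoniccontbound}, so for a single factor to be integrable one needs $\tfrac{\beta}{4\pi\gamma N}<1$; it is the $\tfrac1N$ weight carried by $\xi_i^2=\tfrac1{\gamma N}$ that ultimately tames it, precisely as in the self-interaction estimate of \autoref{prop:zgreendomain}, and one must verify that this weight keeps the whole boundary-layer contribution within the partition-function budget throughout the range $|\beta/\gamma|\le8\pi$ (for $N$ large enough, as in \autoref{prop:zgreendomain}). The care needed is that, unlike on $\T^2$, the pointwise estimates must distinguish the bulk from the boundary layer $\{d\lesssim 1/m\}$, where both $w_m$ and the difference $W_{m,\R^2}-W_m$ are concentrated.
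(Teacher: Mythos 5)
Your strategy coincides with the paper's: the Dirichlet splitting \eqref{yukawapotentialdomain}, the maximum-principle control of $w_m$ through $K_0(m\,d(\cdot))$, the free-Yukawa logarithmic bound at scale $r_m=\frac{2\log m}{m}$, and the H\"older reduction taken from the first part of the proof of \autoref{prop:zyukawatorus}. Your sign bounds $0\le W_m\le W_{m,\R^2}$ and $-\frac1{2\pi}K_0(m\,d(x))\le w_m(x,y)\le 0$ are a clean repackaging of the paper's pointwise estimates, and your insistence on keeping the weight $\frac1N$ on the self-interaction terms is in fact a refinement of what the paper writes: there the boundary singularity enters the two-body integral at full strength $\frac{\beta}{4\pi\gamma}$ (the $\frac1N$ having been eaten by the exponent-$N$ H\"older step), and the resulting boundary-layer integral $\int_D\pa{d(x)/r_m}^{-\frac{\beta}{4\pi\gamma}}\chi_{\set{d(x)<r_m}}\,dx$ converges only when $\beta/\gamma<4\pi$, and is of order $r_m$ rather than $r_m^2$; retaining the $\frac1N$ is precisely what lets the boundary layer stay within budget on the whole range claimed in the statement.

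The gap is in the direction of your auxiliary H\"older step. You assign the exponent $q$ ``close to $1$'' to the self-interaction factors, so the pair part inherits the large conjugate exponent $p=q/(q-1)$. But the pair part is exactly where the range of $\beta/\gamma$ is saturated: after the exponent-$N$ reduction, each two-body factor contains $\pa{|x-y|/r_m}^{-\frac{p\beta}{4\pi\gamma}}$, integrable in two dimensions only if $p\beta/\gamma<8\pi$; with $q$ close to $1$, hence $p$ large, your argument covers only $|\beta/\gamma|<8\pi/p$, a range that shrinks to nothing --- the opposite of what you claim. The allocation must be reversed: give the pair part the exponent $p$ close to $1$ (so the diagonal singularity remains integrable for all $\beta/\gamma<8\pi/p$, arbitrarily close to $8\pi$), and give the self-interaction part the large conjugate $q$. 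This is harmless precisely because the self-interaction exponent still carries the $\frac1N$: each single-site factor $\int_D \exp\pa{\tfrac{q\beta}{4\pi\gamma N}K_0(m\,d(x))}dx$ is finite as soon as $N>\frac{q\beta}{4\pi\gamma}$ and equals $1+O\pa{\tfrac1{Nm}}$, exactly as you computed. With this swap your proof goes through for every $\beta/\gamma<8\pi$ and $N$ large enough (the caveat you already flag, and all that \autoref{thm:cltdomain} requires); note that no admissible choice of exponents reaches $|\beta/\gamma|=8\pi$ itself, and that the resulting global factor $\pa{1+C/(Nm)}^N=e^{O(1/m)}$ is not literally of the form $\pa{1+C(\log m)^2/m^2}^N$, but it tends to $1$ and serves identically in \autoref{cor:zuniformbound}.
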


\begin{proof}
	As in the first part of the proof of \autoref{prop:zyukawatorus}, we reduce by means of H\"older inequality to bound the integral
	\begin{equation*}
	 I=\int_{D^2} e^{\frac{\beta}{2\gamma}W_m(x,d)}dxdy.
	\end{equation*}
	We thus proceed to bound pointwise the interaction potential $W_m(x,y)=W_{m,\R^2}(x,y)+w_m(x,y)$.
	Let us first fix $x$, and consider the small radius $r_m=\frac{2\log m}{m}$, as we did in \autoref{prop:zyukawatorus}.
	For $m$ large enough, $B(x,r_m)\subseteq D$, and we have showed in \autoref{sec:clttorus} that for all $x,y\in \R^2$,
	\begin{equation*}
		W_{m,\R^2}(x)\leq -\frac{1}{2\pi}\log\pa{\frac{|x-y|}{r_m}}\chi_{B(x,r_m)}(y)+ \frac{C}{m^2}.
	\end{equation*}
	We are thus left to bound $w_m(x,y)$: by definition (\ref{yukawapotentialdomain}) and the maximum principle, it holds,
	for all $x$ uniformly in $y$,
	\begin{equation*}
		w_m(x,y)\leq \frac{1}{2\pi}K_0(m d(x)) \leq -\frac{1}{2\pi}\log\pa{\frac{d(x)}{r_m}}\chi_{d(x)<r_m}+ \frac{C}{m^2}.
	\end{equation*}
	Going back to $I$, we get
	\begin{align*}
	I&\leq e^{C/m^2}\int_{B(x,r_m)} \pa{1+\pa{\frac{|x-y|}{r_m}}^{-\frac{\beta}{4\pi\gamma}}}dy \cdot
	                \int_D \pa{1+\pa{\frac{d(x)}{r_m}}^{-\frac{\beta}{4\pi\gamma}}}dx\\
	&\leq e^{C/m^2} \pa{1+C r_m^2}^2,
	\end{align*}
	which concludes just as in \autoref{prop:zyukawatorus}.	 
\end{proof}

\begin{rmk}\label{rmk:restriction}
  The technical reason behind the parameter restriction in \autoref{prop:zyukawadomain} and \autoref{prop:zgreendomain} above could be avoided if a local decomposition of the
  Yukawa potential as in \autoref{prop:expansionsphere} is available for a general domain
  $D$ with smooth enough boundary. Indeed, in that case, one could deduce that
  $Z_{\beta,\gamma,N}<\infty$, and thus that the meaure $\nu_{\beta,\gamma,N}$ is well
  defined for all values of $\beta>0$, $\gamma>0$. Likewise, \autoref{prop:zyukawadomain}
  and in turns \autoref{prop:zgreendomain} would hold woithout restrictions.
  
  A way to prove a local decomposition for the Yukawa potential is to use the same
  strategy of \autoref{sec:sphere}, namely the general representation \eqref{sphereheatrep},
  that holds beyond the geometry of the sphere. Through the point of view of the heat
  kernel, the role of the geometry of the domain and of its boundary becomes apparent
  in terms of the divergence in time of the heat kernel, whose behaviour depends on the
  number of geodetics and their intersection with the boundary. We refer to
  the fundamental \cite{molcanov} for further details. We notice in particular that
  if the intrinsic geometry of the domain is geodesically convex, that in the flat
  metric means that the domain is convex, the same estimates, in particular
  \cite[Theorem 2.1]{molcanov}, of the case without boundary such as the sphere or the
  torus, hold. This justify the following corollary, that fully generalizes the
  central limit theorem of \cite{bodineau} from the sphere to general convex domains.
\end{rmk}

\begin{cor}
  Assume the neutrality condition \eqref{neutral}.
  If $D$ is a convex domain, then the conclusions of \autoref{thm:cltdomain}
  hold for all $\beta>0$ and $\gamma>0$.
\end{cor}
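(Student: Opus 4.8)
The restriction $\beta/\gamma<8\pi$ in \autoref{thm:cltdomain} enters only through \autoref{prop:zyukawadomain}: there, unlike on the torus, we had at our disposal a mere two-particle H\"older bound and could not run the minimal-dipole argument of \autoref{prop:zyukawatorus}. As anticipated in \autoref{rmk:restriction}, it therefore suffices to establish, on a convex $D$, a \emph{local} decomposition of the Yukawa potential entirely analogous to the ones on $\T^2$ and on $\S^2$, namely
\begin{equation*}
	W_m(x,y)= \pa{-\frac1{2\pi}\log\frac{|x-y|}{r_m}+ O(1)}\chi_{\{|x-y|\leq r_m\}}+ O(m^{-2}),
\end{equation*}
uniformly in $x,y\in D$, with $r_m=\frac{2\log m}{m}$. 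Everything else in the proof of \autoref{thm:cltdomain} --- the case $\beta=0$, the $V_m$-estimate \autoref{prop:Fcomplexboundstorus} (valid on $D$ by \autoref{lem:Fboundsdomain}), and the uniform-integrability argument of \autoref{ssec:proofclt} --- is already insensitive to the size of $\beta/\gamma\geq0$, so the corollary reduces to the displayed decomposition and the consequent upgrade of \autoref{prop:zyukawadomain} to all positive temperatures.

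The plan for the decomposition is to argue through the heat-kernel representation $W_m(x,y)=\int_0^\infty e^{-m^2t}p_D(t,x,y)\,dt$, with $p_D$ the Dirichlet heat kernel on $D$, exactly as in the proof of \autoref{prop:expansionsphere}. Here convexity is decisive: in the flat metric a convex domain is geodesically convex, so that the minimizing geodesic joining two interior points is the straight segment between them, which never meets $\partial D$; by \cite[Theorem 2.1]{molcanov} the short-time heat-kernel estimates then hold in the same form as in the boundaryless case, namely $p_D(t,x,y)\leq C$ for $t\geq1$, $p_D(t,x,y)\lesssim \frac1t e^{-|x-y|^2/(4t)}$ for $t\leq1$, and $p_D(t,x,y)=\frac1{4\pi t}e^{-|x-y|^2/(4t)}+O(1)$ for small $t$ (the flat analogue of \eqref{e:expansion}, with $H\equiv1$). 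Splitting the time integral at $t=r_m^2$ and reasoning verbatim as on $\S^2$, the ranges $\{t\geq r_m^2\}$ and $\{|x-y|\geq r_m\}$ contribute $O(m^{-2})$, while the remaining piece produces the logarithmic term. I expect this to be the main obstacle and the only place where convexity is genuinely used: in a non-convex domain, broken or multiple geodesics touching the boundary would add further short-time singularities to $p_D$ and spoil the uniform form of the decomposition.

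With the decomposition in hand I would rerun the minimal-dipole argument of \autoref{prop:zyukawatorus} on $D$. Relabelling vortices by minimal-distance dipoles as in \eqref{minimaldipoles}, grouping $H_{W_m}$ as in \eqref{coupling}, and estimating the differences $W_m(z_i,y_j)-W_m(y_i,y_j)\leq C\pa{\chi_{\{d(y_i,z_i)\leq r_m/2\}}+m^{-2}}$ exactly as there, one arrives at $\int_{D^N}e^{-\beta H_{W_m}}\,dx^N\leq\pa{1+C_{\beta,\gamma}\frac{(\log m)^2}{m^2}}^N$ for \emph{all} $\beta/\gamma\geq0$ and $N$ large. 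The only feature absent on $\T^2$ is the self-interaction $\frac12\sum_i\xi_i^2 w_m(x_i,x_i)$ carried by $H_{W_m}$ on a bounded domain; but the maximum principle applied to \eqref{yukawapotentialdomain} gives $w_m(x,x)\geq-\frac1{2\pi}K_0(m\,d(x))$, so that each self-interaction factor $\int_D e^{-\frac{\beta}{2\gamma N}w_m(x,x)}\,dx\leq\int_D e^{\frac{\beta}{4\pi\gamma N}K_0(m\,d(x))}\,dx=1+O\pa{\frac1{Nm}}$ fits harmlessly inside the same $(\,\cdot\,)^N$ bound. The case of unequally many positive and negative vortices is settled, as on $\T^2$, by the unpaired block in \eqref{posandneg}, which contributes at most $\exp(C_{\beta,\gamma}N/m^2)$ since $W_m\gtrsim-m^{-2}$.

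Finally, this unrestricted Yukawa bound combines with \autoref{prop:Fcomplexboundstorus} through H\"older's inequality to yield a uniform-in-$N$ bound on $Z_{\beta,\gamma,N}$ for every $\beta/\gamma\geq0$, exactly as in \autoref{cor:zuniformbound}; in particular $Z_{\beta,\gamma,N}<\infty$ for all $\beta,\gamma>0$ once $N$ is large, which also removes the restriction of \autoref{prop:zgreendomain} in the limit. Granted this uniform bound and the case $\beta=0$, the three conclusions of \autoref{thm:cltdomain}, with their $\bar g$ corrections, follow for all $\beta,\gamma>0$ by the uniform-integrability argument of \autoref{ssec:proofclt}, together with \autoref{lem:energyenstrophy} and L\'evy's continuity theorem, with no further change.
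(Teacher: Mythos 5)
Your proposal follows exactly the paper's own route: the paper justifies this corollary via \autoref{rmk:restriction}, i.e.\ the heat-kernel representation \eqref{sphereheatrep} together with \cite[Theorem 2.1]{molcanov} and the observation that a convex domain is geodesically convex, so that the boundaryless short-time estimates --- and hence the local Yukawa decomposition of \autoref{prop:expansionsphere} --- carry over, after which \autoref{prop:zyukawadomain}, \autoref{cor:zuniformbound} and the argument of \autoref{ssec:proofclt} apply without the restriction $\beta/\gamma<8\pi$. You in fact supply more detail than the paper does (the rerun of the minimal-dipole argument on $D$ and the maximum-principle treatment of the self-interaction terms $w_m(x_i,x_i)$), and these details are consistent with the paper's argument.
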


\section{Concluding Remarks: a Comparison with Mean Field Theory}\label{sec:meanfield}

In this Section we reinterpret our results in sight of the mean field limit studied by \cite{clmp92,clmp95,Kie1993}
(see also \cite{lionsbook}). Those works cover the case of vortices with identical intensities, while \cite{bodineau, neri} consider vortices
with (random) intensities of different signs. Vortices with random intensities on $\mathbb{S}^2$ have been analyzed in \cite{KieWan2012}.
We also mention results on vorticity filaments in dimension 3, \cite{michele17,michele19}.

The scaling of intensities $|\xi|\sim N^{-1}$, is dictated by energy considerations, in order for the dominant (infinite) self-interaction
term to vanish. It is \emph{not} the scaling we assumed in the previous Sections, as it corresponds to the law of large number scaling.
The scaling of inverse temperature $\beta\sim N$ is chosen so that the limit is non-trivial, see \cite{marchioropulvirenti}.
The resulting Hamiltonian on a bounded domain $D\subset \R^2$, with parameters of order one up to rescaling, is
\begin{equation*}
	\frac1{N}\sum_{i< j}\sigma_i\sigma_j G(x_i,x_j)
	+ \frac1{2N}\sum_{i=1}^N\sigma_i^2 g(x_i,x_i),
\end{equation*}
with $\sigma_i$ uniformly bounded. The corresponding Gibbs measure coincides with our $\nu_{\beta}^N=\nu_{\beta,1}^N$.

In the case of a bounded domain, for vortices with the same intensity, \cite{clmp92} proved that the single vortex distribution,
that is the one dimensional marginal of $\nu_{\beta}^N$, converges to a superposition of solutions to the Mean Field Equation,
\begin{equation}\label{mfe}
  \omega= \frac{e^{-\beta\psi}}{\int_D e^{-\beta\psi}dx}, \quad -\Delta\psi = \omega,
\end{equation}
with the Poisson equation for the stream function $\psi$ being complemented with Dirichlet boundary conditions.
Solutions to (\ref{mfe}) are particular steady solutions of the Euler equations that minimize the energy-entropy
functional $\beta E+S$ (defined in (\ref{firstintegrals})).
A unique minimum exists when $\beta>0$ (and for $\beta\leq 0$ close enough to $0$), so that $\nu_{\beta}^N$
converges, in the sense of finite dimensional distributions, to an infinite product measure (\emph{propagation of chaos}).
Connections of the mean field equation with the microcanonical ensemble and equivalence with the canonical
ensemble are considered in \cite{clmp95}.

The case of intensities with different signs is studied in \cite{bodineau} through a large deviations approach.
Under the assumption that the empirical measure of intensities converges to a probability distribution $\mu$,
the joint empirical measure of intensities and positions satisfies a large deviation principle with speed
$N^{-1}$, and the extended energy-entropy functional as rate function:
\begin{equation}\label{freeenergy}
  H(\nu)+ \frac{\beta}{2}\int_{\R^2\times D^2} \sigma\sigma' G(x,x')\nu(d\sigma,dx)\nu(d\sigma',dx'),
\end{equation}
where $H$ is the relative entropy of $\nu$ with respect to the product of $\mu$ and the normalized Lebesgue measure on $D$.
The mean field equation satisfied by the density (corresponding to the Euler-Lagrange equation for the minimisation problem of the rate function) is
\begin{equation*}
	\rho(\sigma,x)	= \frac{1}{Z}e^{-\beta\sigma\psi},
\end{equation*}
with $Z$ a normalising constant and $\psi$ is the averaged stream function,
\begin{equation}\label{stream}
  \psi(x)= \int \sigma G(x,y)\rho(\sigma,y)\mu(d\sigma)dy.
\end{equation}
Similar statement also hold in the periodic case.

Looking back to our setting, in both the case of zero average vortices on $\T^2,\S^2$, and the one of vortices in a bounded domain
$D$ with neutral global intensity, for $\beta\geq 0$, the free energy (\ref{freeenergy}) is non-negative and attains the value zero on
the $N$-fold product uniform measure. Moreover, the stream function (\ref{stream}) is null. 
The large deviations principle of \cite{bodineau} implies a law of large numbers, while our \autoref{thm:clttorus} and \autoref{thm:cltdomain}
provide the convergence of fluctuations with respect to the null average. We mention again the central limit theorem derived in \cite{bodineau},
which is however restricted to a disk domain and to a small class of test function.

\bibliographystyle{plain}

\end{document}